\newcommand{\carre}[3]{\filldraw[draw=black,fill=#3] (#1,#2)--(#1+1,#2)--(#1+1,#2+1)--(#1,#2+1)--(#1,#2)}
\newtheorem{theo}{Theorem}
\newtheorem{prop}{Proposition}
\newtheorem{lem}{Lemma}
\theoremstyle{definition}
\newtheorem{defi}{Definition}
\newcommand{\ind}{ 1\hspace{-.55ex}\mbox{l}}
\newcommand{\errtr}{\varpi}
\newcommand{\impl}{\xi}
\newcommand{\incr}{\Delta}
\newcommand{\coin}{\mathfrak{W}}
\newcommand{\azuma}{\mathfrak{H}}
\newcommand{\rounding}{\sigma}
\newcommand{\N}{\ensuremath{\mathbb N}}
\newcommand{\R}{\ensuremath{\mathbb{R}}}
\begin{document}\title{\bf  The impatient collector}

\author{Anis Amri \thanks{Institut Élie Cartan, Université de Lorraine Email: anis.amri@univ-lorraine.fr} \and Philippe Chassaing \thanks{ Institut Élie Cartan, Université de Lorraine Email: chassaingph@gmail.com}}

\maketitle
\begin{abstract} In the coupon collector problem with $n$ items, the collector needs a random number of tries $T_{n}\simeq n\ln n$ to complete the collection. Also, after $nt$ tries, the collector has secured approximately a fraction $\zeta_{\infty}(t)=1-e^{-t}$ of the complete collection, so we call $\zeta_{\infty}$ the (asymptotic) \emph{completion curve}.  In this paper, for $\nu>0$, we address the asymptotic shape $\zeta (\nu,.) $ of the completion curve under the condition $T_{n}\leq \left( 1+\nu \right) n$, i.e. assuming that the collection is \emph{completed unlikely fast}. As an application to the asymptotic study of complete accessible automata, we provide a new derivation of a formula due to Kor\v{s}unov \cite{MR517814,MR862029}.
\end{abstract}

\tableofcontents

\section{Introduction}

\subsection{Main result} 
\label{sec:mainr}

This section is intended as a concise introduction to the main results, at the price of, eventually, lacking details, for instance on the way we round real numbers where integers are expected. Details and context are given in the next section. In the standard coupon collector problem with $n$ items,  our concern is the \emph{completion curve} $\zeta_{\infty}$: after $nt$ tries, according to  \cite[pp. 4-5]{MR0471015}, the collector has secured approximately a fraction $$\zeta_{\infty}(t)=1-e^{-t}$$ of the complete collection\footnote{In \cite[pp. 4-5]{MR0471015},  the coupons outside the collection are seen as the empty cells in a random allocation scheme.}. Furthermore,   the coupon collector needs a random number of tries $T_n$ to complete the collection, with  expectation:
$$ \mathbb{E} \left[ T_{n}\right] = nH_n \sim n\ln n.$$ 
In this paper, for any given $\nu>0$, we address the asymptotic shape $\zeta_{\nu}$ of the completion curve  conditioned to the event $\mathcal{I}(\nu,n ) $ :
\begin{align*}
T_{n}&\leq (1+\nu) n,
\end{align*}
i.e.  when the collection is completed  much faster than in the classical model, hence the title. Since $(1+\nu) n=o\left(\mathbb{E} \left[ T_{n}\right]\right)$, one expects that the conditioning event $\mathcal{I}(\nu,n ) $ has an exponentially small probability, see Section \ref{largedev}. Define $\nu(N,n)=\nu$ through the relations:
$$N=(1+\nu)n,\quad \nu=\dfrac{N-n}n.$$ Formal definitions are given in the next section, but let us, for now, define the random variable $\zeta   _n(t,\omega)$ as the fraction of the complete collection secured by the collector  after $nt$ tries. Let $W_0$ denote the principal branch of the  Lambert W-function (i.e. the inverse of $x\mapsto xe^{x}$), and set:
$$F(x)=\exp\left( -1-x -W_{0}\left( -\left( 1+x \right) e^{-1-x}\right)\right).$$
Let  $\zeta(\nu,.)$ denote the unique solution, on $(0,\; 1+\nu]$, of the Cauchy problem:
\begin{equation}
\label{cp1}
y'=F\left(\tfrac{x-y}{y}\right),\;y(1+\nu)=1.
\end{equation}
The graph of $\zeta (\nu,.)$ stays in the set  $\{1+\nu\ge x>y>0\}$, and satisfies $\lim _{t\downarrow 0}\zeta \left( u,t\right) =0$, 
see Section \ref{limitpath}.
\begin{figure}[ht]
\centering
\includegraphics[width=8cm]{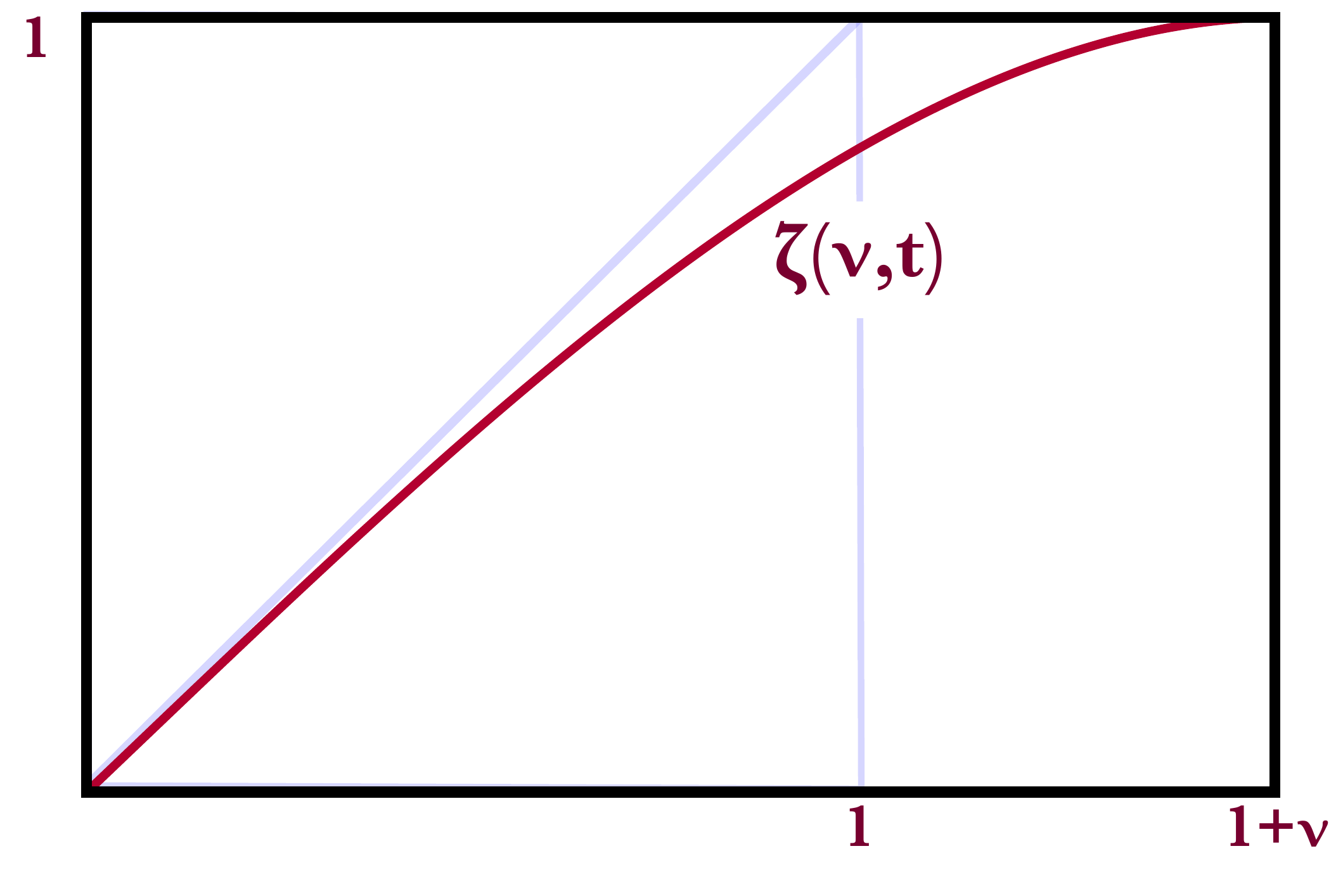}
\label{completioncurve}
\end{figure}

Let $\mathbb{P}_{N,n}$ denote the conditional probability distribution of the coupon problem, given that $T_{n}\leq  N$. The asymptotic completion curve of the impatient collector is as follows :
\begin{theo}\label{impatientprofile} For any $a,\varepsilon,\nu>0$, when $N,n\rightarrow+\infty$ with  $N/n\rightarrow 1+\nu$, i.e. with $\lim \Lambda(N,n)=\nu$, we have 
\begin{equation*}
\lim_{N,n}\mathbb{P}_{N,n} \left( \sup_{[a,1+\nu]} \left| \zeta_{n}-\zeta(\nu,.) \right|\geq \varepsilon\right) =0.
\end{equation*}
\end{theo}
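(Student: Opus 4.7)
The plan uses the classical decomposition $T_n = G_1 + \cdots + G_n$, where the $G_i \sim \mathrm{Geom}((n-i+1)/n)$ are the independent waiting times between successive new-coupon acquisitions. The completion curve is then the scaled left-continuous inverse of the partial-sum process $S_k = G_1+\cdots+G_k$: $\zeta_n(t) = k/n$ for $t \in (S_k/n, S_{k+1}/n]$, so Theorem \ref{impatientprofile} will follow from a uniform law of large numbers for $S_{\lfloor ns\rfloor}/n$ under $\mathbb{P}_{N,n}$ on $s \in [0,1]$, with a continuous strictly increasing limit, followed by a standard inversion argument. To handle the exponentially rare conditioning event, I introduce the Cram\'er tilt $\tilde{\mathbb{P}}$ under which the $G_i$ remain independent but with $G_i \sim \mathrm{Geom}(q_i)$, where $q_i = 1 - (i-1)e^{-\mu}/n$ and the parameter $\mu = \mu(\nu) > 0$ is chosen so that the event becomes typical, i.e.\ $\tilde{\mathbb{E}}[T_n]/n \to 1+\nu$; a Riemann-sum computation identifies $\mu$ as the unique positive solution of $e^\mu\ln(1/(1-e^{-\mu})) = 1+\nu$.

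The Radon-Nikodym derivative is
\[
\frac{d\mathbb{P}_{N,n}}{d\tilde{\mathbb{P}}} = \frac{e^{\mu T_n}\mathbf{1}_{\{T_n \le N\}}}{\tilde{\mathbb{E}}\bigl[e^{\mu T_n}\mathbf{1}_{\{T_n \le N\}}\bigr]},
\]
and a lattice local central limit theorem for $T_n$ under $\tilde{\mathbb{P}}$ (whose summands have bounded variances and satisfy Cram\'er's condition uniformly in $i/n$) yields $\tilde{\mathbb{E}}\bigl[e^{\mu T_n}\mathbf{1}_{\{T_n\le N\}}\bigr] \asymp e^{\mu N}/\sqrt n$, so this Radon-Nikodym ratio is $O(\sqrt n)$ on its support. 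Under $\tilde{\mathbb{P}}$ the $G_i$ are independent and sub-exponential with parameters bounded uniformly (since $q_i \ge 1-e^{-\mu}$), so Bernstein's inequality combined with a union bound over an $n^{-1}$-grid in $s$ gives
\[
\tilde{\mathbb{P}}\!\left(\sup_{s\in[0,1]} \bigl|S_{\lfloor ns\rfloor}/n - \tau(s)\bigr| > \varepsilon\right) = e^{-\Omega(\varepsilon^2 n)}, \quad \tau(s) := \int_0^s \frac{du}{1-ue^{-\mu}} = -e^\mu\ln(1-se^{-\mu}).
\]
The polynomial Radon-Nikodym bound transfers this exponential concentration to $\mathbb{P}_{N,n}$, and since $\tau:[0,1]\to[0,1+\nu]$ is continuous and strictly increasing with $\tau(0)=0$, $\tau(1)=1+\nu$, the uniform LLN for $S_{\lfloor ns\rfloor}/n$ inverts to the claimed uniform convergence $\zeta_n \to \tau^{-1}$ on $[a,1+\nu]$ for any $a>0$.

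It remains to identify $\tau^{-1}(t) = e^\mu(1-e^{-te^{-\mu}})$ as the solution of \eqref{cp1}. Direct differentiation gives $y'(t) = e^{-te^{-\mu}} = 1-y e^{-\mu}$, and $y(1+\nu)=1$ is precisely the equation defining $\mu$. To match the form $y' = F((x-y)/y)$, set $u = ye^{-\mu}$ and $z = (t-y)/y$; the curve relation $t = -e^\mu\ln(1-u)$ yields $1+z = t/y = -\ln(1-u)/u$, after which one checks that $w := (1-u)\ln(1-u)/u$ lies in $(-1,0)$ and satisfies $we^w = -(1+z)e^{-(1+z)}$, so $w = W_0(-(1+z)e^{-(1+z)})$; since $-1-z-w = \ln(1-u)$, this gives $F(z) = 1-u = 1-ye^{-\mu} = y'$, as required. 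The main technical obstacle is the lattice local CLT in the middle paragraph, needed for the sharp denominator asymptotic; since the $q_i$ depend smoothly on $i/n$ and stay uniformly away from both $0$ and $1$, standard Fourier-analytic triangular-array arguments should suffice.
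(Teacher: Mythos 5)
Your proof takes a genuinely different route from the paper's. The paper works with the \emph{time-reversed} completion curve $Z^{(n)}_t = Y_n(N-t)$, shows (Proposition \ref{rwstirling}) that it is an inhomogeneous Markov chain whose one-step transition probability is the ratio of Stirling numbers $r(m,\ell) = {m-1\brace\ell-1}/{m\brace\ell}$, then proves the quantitative Theorem \ref{mainprofile} via an Euler scheme with Azuma control of the martingale part; the key input is the refinement of Good's saddle-point asymptotics (Theorems \ref{goodplus} and \ref{transitionp}), which takes up half the paper. You instead work with the \emph{forward} inverse process through the independent geometric gaps $G_i = T_i - T_{i-1}$, remove the rare-event conditioning by a Cram\'er tilt with a single parameter $\mu$, get uniform concentration of $S_{\lfloor ns\rfloor}/n$ under the tilted measure by Bernstein, transfer back via a polynomially bounded Radon--Nikodym derivative (which needs a lattice local CLT in a triangular array — the one nontrivial step you defer, but it is indeed standard since the $q_i$ are uniformly in a compact subinterval of $(0,1)$ and the $G_i$ have span $1$), and invert. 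A pleasant by-product of your route is the explicit closed form $\zeta(\nu,t) = e^{\mu}\bigl(1 - e^{-te^{-\mu}}\bigr)$ for the limit curve, which the paper leaves implicit; your Lambert-$W$ verification that this solves \eqref{cp1} is correct, and the consistency check $e^{-\mu} = 1-e^{-\Xi}$ with the paper's $\Xi$ follows from equation \eqref{zeta}. What the paper's approach buys, and yours does not directly supply, is twofold: the explicit convergence rate $n^{-1/3}\ln n$ in Theorem \ref{mainprofile}, and — more importantly for the sequel — the Stirling-number Markov chain representation and Theorem \ref{transitionp}, which are reused wholesale in Section \ref{kornew} to localize $Z^{(n)}$ near its endpoint and derive Kor\v{s}unov's formula via the Pollaczek--Khinchine identity. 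Your argument would need separate work to recover those tools, though for Theorem \ref{impatientprofile} itself it is cleaner and avoids the saddle-point machinery entirely.
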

\noindent Theorem \ref{impatientprofile}  is  an extension of Theorem \ref{basicprofile} (see next section), to the conditional case: $\zeta_{n}$ converges in probability to $\zeta(\nu,.)$, uniformly in any interval $[a,1+\nu]$.  Thus $\zeta(\nu,.)$  is  the $\nu$-analog of  $\zeta_{\infty}$. In the next section, we give a stronger result, in which convergence in probability is given with an explicit bound on the error,  cf. Theorem \ref{mainprofile}. In Section \ref{kornew}, we  discuss some applications of this result to random finite automata, including a new (to our knowledge) derivation of a formula by Kor\v{s}unov \cite{MR517814}. Finally, in  Sections \ref{saddle} and \ref{endix} we precise (see Theorem \ref{goodplus})  a classic asymptotic formula, due to Good, for Stirling numbers of the second kind, providing a bound that is key for our results, but could also be of independent interest.

\subsection{Context: coupon collector problem, Stirling numbers and random allocation} 
\label{sec:coupon}

Let us define more precisely the classical model (resp. the conditioned model), that we shall call  the \emph{patient model} (resp. the \emph{impatient model}). In the \emph{patient model}, we consider a sequence 
$$\omega=\left(\omega_{k}\right) _{k\ge1}$$
of uniform i.i.d. integers in $[\![1,n]\!]$. Let $\mathbb{P}_n$ denote the corresponding probability distribution on the set $[\![1,n]\!]^{\N}$ of  infinite sequences. For $\ell\ge0$, let
$$y_{\ell}\left( \omega \right) =\# \left\{ \omega_{k} \,\vline\, 1\leq k\leq \ell\right\},$$
denote the size of the collection after the $k$th try, or the number of nonempty cells after the $k$th allocation, so that $T_n$ can also be defined as follows: for $ 1\le k\le n$,
$$T_{k}\left( \omega \right) =\inf \left\{ \ell\geq 1\,\vline\, y_{\ell}\left( \omega \right) =k\right\} .$$ 
Then, set:
\[Y_n(t,\omega)=y_{\lfloor t\rfloor}\left( \omega \right),\quad t\ge0,\]
so that the \emph{completion  curve} is defined as:
\[\zeta   _n(t,\omega)=n^{-1}Y_n(nt,\omega).\]
One finds easily:
\begin{equation*}
\mathbb{E}_{n} \left[\zeta   _n(t)\right] =1- \left(1-\dfrac1n\right)^{\lfloor nt\rfloor}\simeq \zeta_\infty(t),
\end{equation*}
but also, more precisely, as a consequence of \cite[Ch. 1.1-3]{MR0471015}, 
\begin{theo}\label{basicprofile}  In the patient model, in probability, for any $t\ge 0$,
\[\lim_{n} \zeta   _n(t,.)=\zeta_{\infty}(t).\]
\end{theo}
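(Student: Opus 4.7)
The plan is a second-moment method. Writing $y_\ell(\omega)=\sum_{i=1}^n X_i^{(\ell)}$ where $X_i^{(\ell)}=\I{i\in\{\omega_1,\ldots,\omega_\ell\}}$, I would first recover the mean computation quoted in the excerpt: since $\mathbb{P}_n(X_i^{(\ell)}=1)=1-(1-1/n)^\ell$, for $\ell=\lfloor nt\rfloor$ we get $\mathbb{E}_n[\zeta_n(t)]=1-(1-1/n)^{\lfloor nt\rfloor}\to 1-e^{-t}=\zeta_\infty(t)$. The target is then to show $\mathrm{Var}_n(\zeta_n(t))\to 0$ and conclude by Chebyshev's inequality.

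For the variance, I would compute the joint probabilities for $i\neq j$:
\[
\mathbb{E}_n[X_i^{(\ell)} X_j^{(\ell)}]=1-2\left(1-\tfrac1n\right)^\ell+\left(1-\tfrac2n\right)^\ell,
\]
by inclusion–exclusion on the event that coupon $i$ or $j$ is missing. The covariance therefore equals
\[
\mathrm{Cov}_n(X_i^{(\ell)},X_j^{(\ell)})=\left(1-\tfrac2n\right)^\ell-\left(1-\tfrac1n\right)^{2\ell}.
\]
A short calculation using $(1-1/n)^2=(1-2/n)(1+\tfrac{1}{n^2-2n})$ shows that for $\ell=\lfloor nt\rfloor$ this covariance is $O(1/n)$ as $n\to\infty$. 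Combining with the trivial bound $\mathrm{Var}(X_i^{(\ell)})\le 1$ yields
\[
\mathrm{Var}_n(y_{\lfloor nt\rfloor})\le n+n(n-1)\cdot O(1/n)=O(n),
\]
and hence $\mathrm{Var}_n(\zeta_n(t))=n^{-2}\mathrm{Var}_n(y_{\lfloor nt\rfloor})=O(1/n)$.

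Finally, Chebyshev's inequality gives, for every $\varepsilon>0$,
\[
\mathbb{P}_n\bigl(|\zeta_n(t)-\mathbb{E}_n[\zeta_n(t)]|\ge\varepsilon\bigr)\le\frac{\mathrm{Var}_n(\zeta_n(t))}{\varepsilon^2}=O(1/n),
\]
which, combined with $\mathbb{E}_n[\zeta_n(t)]\to\zeta_\infty(t)$, yields convergence in probability $\zeta_n(t)\to\zeta_\infty(t)$.

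I don't expect any serious obstacle: the computation is entirely routine, and the only delicate point is the asymptotic control of $(1-2/n)^{\lfloor nt\rfloor}-(1-1/n)^{2\lfloor nt\rfloor}$, which is a one-line exercise using $\log(1-x)=-x-x^2/2+O(x^3)$. If a stronger statement were wanted, e.g.\ convergence uniform in $t$ on compacts (as foreshadowed by the next theorem in the paper), I would combine the above pointwise statement with the monotonicity of $\ell\mapsto y_\ell$ and continuity of $\zeta_\infty$ via a standard Dini-type argument over a finite grid of $t$-values.
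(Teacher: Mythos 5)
Your second-moment argument is correct: the indicator decomposition, the inclusion--exclusion formula for $\mathbb{E}_n[X_i^{(\ell)}X_j^{(\ell)}]$, the bound $\mathrm{Cov}_n(X_i^{(\ell)},X_j^{(\ell)})=O(1/n)$ (in fact this covariance is negative, as $(1-1/n)^2>1-2/n$, which only helps), and the Chebyshev finish are all sound. Note, however, that the paper does not prove Theorem~\ref{basicprofile} at all -- it simply cites the classical reference \cite[Ch.~1.1--3]{MR0471015} on random allocations -- so what you have written is a self-contained elementary derivation where the paper relies on the literature. That is a reasonable thing to do; the argument you give is essentially the one underlying the cited result.
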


As opposed to the patient model, in the \emph{impatient model}, we consider the conditional distribution of $\omega$ given that $T_{n}(\omega) \leq N$: then only the prefix $\omega_{[N]}=(\omega_1,\omega_2,\dots,\omega_N)$ of $\omega$ matters. In the impatient model, $\omega_{[N]}$ is  uniformly distributed on the $n!{ N\brace n }$ sequences that are surjections on  $[\![1,n]\!]$, a small subset $\Omega_{N,n}$  of $[\![1,n]\!]^N$. Here, as usual, ${m\brace \ell}$ denotes the number of partitions of a set of $m$ elements  in $\ell$ nonempty subsets, called \emph{Stirling number of the second kind}.   Thus $\mathbb{P}_{N,n}$, the conditional probability distribution of the coupon problem, given that $T_{n}\leq  N$,  is the uniform distribution on $\Omega_{N,n}$. A stronger version of Theorem \ref{basicprofile} is as follows:
\begin{theo}\label{mainprofile} For any $a>0$, and for $n_{0}$ large enough, there exists $C=C(n_{0},a)>0$ such that, for $n\ge n_{0}$,
\begin{equation*}
\mathbb{P}_{N,n} \left( \sup_{[a,1+\Lambda(N,n)]} \left| \zeta_{n}-\zeta(\Lambda(N,n),.) \right| \geq Cn^{-1 /3}\right) \leq n^{1/3}e^{-\ln ^{2}n /2}.
\end{equation*}
\end{theo}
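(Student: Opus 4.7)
The plan is to realize $(Y_n(k))_{k\ge 0}$ as an inhomogeneous Markov chain under $\mathbb{P}_{N,n}$, identify its drift via a uniform version of Good's formula (Theorem \ref{goodplus}), and then control the fluctuations around its compensator by Azuma--Hoeffding, closing up with a Gronwall comparison to the Cauchy problem (\ref{cp1}).

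First, uniformity of $\omega_{[N]}$ on surjections $[\![1,N]\!]\to[\![1,n]\!]$ yields the exact conditional probability that the $(k{+}1)$-st draw is new, given $Y_n(k)=m$:
\[
p_{k,m}^{\mathrm{new}}\;=\;\frac{(n-m)\,A_{N-k-1,\,n,\,n-m-1}}{A_{N-k,\,n,\,n-m}},
\]
where $A_{r,n,s}$ counts sequences in $[\![1,n]\!]^{r}$ whose image covers a fixed $s$-subset, and satisfies the recursion $A_{r,n,s}=s\,A_{r-1,n,s-1}+(n-s)\,A_{r-1,n,s}$ (the combinatorial identity behind the displayed formula). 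The decomposition $A_{r,n,s}=\sum_{j\ge 0}\binom{n-s}{j}(s+j)!\,S(r,s+j)$ expresses $A$ as a linear combination of Stirling numbers, so Theorem \ref{goodplus} pinpoints the dominant index $j$ through a saddle-point analysis whose defining equation is precisely that of the principal Lambert branch $W_0$; a short manipulation then yields
\[
p_{k,m}^{\mathrm{new}}\;=\;F\!\left(\tfrac{k-m}{m}\right)\;+\;O\!\left(\tfrac{1}{n}\right),
\]
uniformly over pairs $(k,m)$ with $m/n\ge a/2$ and $0\le (k-m)/m\le \Lambda(N,n)$.

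Next, set $C_k=\sum_{j<k}p_{j,Y_n(j)}^{\mathrm{new}}$; then $M_k=Y_n(k)-C_k$ is a $\mathbb{P}_{N,n}$-martingale with increments in $[-1,1]$, so Azuma--Hoeffding gives $\mathbb{P}_{N,n}(|M_k|\ge\sqrt{k}\,\ln n)\le 2\,e^{-\ln^{2}n/2}$ at each fixed $k$. A union bound over a grid of $\lceil n^{1/3}\rceil$ equally spaced times on $[0,N]$, combined with a monotone interpolation between consecutive grid points (both $Y_n$ and $C$ have increments at most $1$, hence vary by at most $O(n^{2/3})$ between consecutive grid points), produces $\sup_{k\le N}|Y_n(k)-C_k|/n=O(n^{-1/3})$ on an event of $\mathbb{P}_{N,n}$-probability at least $1-n^{1/3}e^{-\ln^{2}n/2}$. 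On this event, substituting $Y_n(j)/n=C_j/n+O(n^{-1/3})$ into the drift identity and using the local Lipschitz regularity of $F((x-y)/y)$ on the compact $\{1+\nu\ge x\ge y\ge a/2\}$ shows that $c_n(x):=C_{\lfloor nx\rfloor}/n$ solves a perturbed Euler scheme for (\ref{cp1}); a Gronwall argument then yields $\sup_{x\in[a,1+\Lambda(N,n)]}|c_n(x)-\zeta(\Lambda(N,n),x)|=O(n^{-1/3})$, and combining with the martingale bound gives the theorem.

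The hard part is the Stirling step: the drift identity $p_{k,m}^{\mathrm{new}}=F((k-m)/m)+O(n^{-1})$ has to hold uniformly over the whole admissible region of $(k,m)$, in particular up to the boundary where $(N-k)/(n-m)\downarrow 1$ and the saddle approaches degeneracy. This is exactly why the tight error bound of Theorem \ref{goodplus} is indispensable: a weaker, non-uniform version would leave a cumulative error in $C_k$ too large for the Gronwall step to close. A minor additional point is that the vector field of (\ref{cp1}) is singular at $x=0$; the restriction $x\ge a$ confines the analysis to the Lipschitz regime, while a short first-moment computation at time $k=\lfloor an\rfloor$ ensures that $Y_n(\lfloor an\rfloor)/n$ is bounded away from $0$ with overwhelming $\mathbb{P}_{N,n}$-probability.
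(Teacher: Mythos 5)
Your overall architecture (Markov chain structure, drift identification via Good-type asymptotics, Azuma, Euler/Gronwall) is the same skeleton the paper uses, but you are working with the \emph{forward} chain $Y_n(k)$, whereas the paper time-reverses: it studies $Z^{(n)}_t = Y_n(N-t)$ and shows (Proposition \ref{rwstirling}) that the reversed chain has transition probability exactly $r(m,\ell)=\dbinom{}{}{\scriptstyle m-1 \brace \scriptstyle \ell-1}/{\scriptstyle m \brace \scriptstyle \ell}$, a clean ratio of two single Stirling numbers. Theorem \ref{transitionp} then estimates this ratio directly from Theorem \ref{goodplus}. Your forward formulation instead produces the ratio $p^{\mathrm{new}}_{k,m} = (n-m)A_{N-k-1,n,n-m-1}/A_{N-k,n,n-m}$, where each $A_{r,n,s}=\sum_{j}\binom{n-s}{j}(s+j)!{r\brace s+j}$ is a full binomial sum over Stirling numbers, not a single one. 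That choice is what creates the gap.

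The gap is concentrated exactly where you flag it: the assertion that ``Theorem \ref{goodplus} pinpoints the dominant index $j$ through a saddle-point analysis ... a short manipulation then yields $p^{\mathrm{new}}_{k,m}=F((k-m)/m)+O(1/n)$.'' Theorem \ref{goodplus} only bounds the relative error of a \emph{single} Stirling number; it does not by itself locate the dominant term of the sum defining $A_{r,n,s}$, control the Gaussian concentration around it, or produce the needed cancellation in the ratio $A_{r-1,n,s-1}/A_{r,n,s}$ to relative precision $O(1/n)$. That is a second, independent saddle-point analysis which you have neither set up (you would at least need the saddle equation for $j$, a second-moment estimate for the width, and a matching of the two Gaussian factors in numerator and denominator) nor reduced to something in the paper. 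The time-reversal in Proposition \ref{rwstirling} is precisely what makes this step collapse to a one-line Stirling ratio; by skipping it, you have relocated all the analytic difficulty into a claim you do not prove. Until that is filled in — either by reproducing the reversal trick, or by actually carrying out the saddle-point analysis of the $A$-sums and showing uniformity on $\coin_{\eta,\delta}$ — the drift identity, and hence the Euler/Gronwall conclusion, is unsupported. (Two smaller remarks: the uniformity region you state, $0\le(k-m)/m\le\Lambda$, includes $\lambda=0$, which is excluded in Theorems \ref{transitionp} and \ref{goodplus}; and the paper's own Euler argument actually produces an error of size $(1+c\ln n)n^{-1/3}$, so the absorbed logarithm should appear somewhere in the constant bookkeeping.)
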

The expression of  $C(n_{0},a)>0$ is given at Section \ref{sec:concl}.
If we assume that $\Lambda(N , n)$ stays away from 0 and $+\infty$, then, according to the asymptotic analysis of Stirling numbers of the second kind, to be found in \cite{MR0120204}, the conditioning event has an exponentially small probability:
\begin{align*}
\mathbb{P}_n \left( T_{n}\leq N\right) =
n!{ N\brace n }n^{-N}&\simeq \sqrt {\tfrac {e^{\Xi }-1}{e^{\Xi }-1-\Xi }}\ e^{-nJ \left( \Xi \right) },
\end{align*}
in which $J$ is discussed in more detail in Section \ref{largedev}. Let us just mention, now, that $\Xi=\impl(\Lambda)$ is the unique positive solution of
\begin{equation}
\label{zeta}
\impl(\Lambda) =\left( 1-e^{-\impl(\Lambda) }\right) \left( 1+\Lambda \right),
\end{equation}
that $\impl(\Lambda) =-\ln\left(F((N-n)/n)\right)$, and that $J$ is decreasing and satisfies
$$\lim _{+\infty }J \left( \Xi \right) =0,$$
which entails that $J$ is positive.  Together with
\begin{equation}
\label{defrho}
\rho=e^{-\impl},
\end{equation}
the implicit function $\impl$ is known to play a special r\^ole in the asymptotic behavior of ${ N\brace n }$, see Section \ref{saddle}.

\subsection{Asymptotics for the Stirling numbers of the second kind}
\label{sec:stirling}
First we need to set some notations. For some  integers $m\ge\ell\ge1 $, the Stirling number of the second kind, denoted by ${m\brace \ell}$, is the number  of partitions of a set of $m$ elements into $\ell $ non-empty subsets.  By convention ${0\brace 0}=0$, and for $m\geq1$ we have ${m\brace 0}=0$. Let $W_0$ denote the principal branch of the  Lambert W-function (i.e. the inverse of $x\mapsto xe^{x}$), and set:
\begin{align}
\lambda(m,\ell)&=\lambda=\frac{m-\ell}\ell,
\\
\impl  (m,\ell)&=\impl  =1+\lambda +W_{0}\left( -\left( 1+\lambda \right) e^{-1-\lambda }\right),
\\
v&=\dfrac{(\lambda+1)(\impl  -\lambda)}2.
\end{align}
We set:
$$r\left(m,\ell\right)=\dfrac{{m-1\brace \ell-1}}{{m\brace \ell}}, \quad \rho(\lambda)=e^{-\impl  }.$$
The Stirling numbers of the second kind satisfy the following recurrence relation
\begin{equation}
\label{triangle}
\forall m\ge\ell \ge0,\quad{m\brace \ell}=\ell {m-1\brace \ell}+{m-1\brace \ell-1},
\end{equation}
so that
$$\dfrac{\ell {m-1\brace \ell}}{{m\brace \ell}}=1-r\left(m,\ell\right).$$

In Section \ref{sec:transition}, we prove that for $m$, $\ell$ large, $r\left(m,\ell\right)$ depends mostly on the ratio $m/\ell$:
\begin{theo}\label{transitionp}
For any $\delta\in(0,1)$, there exist $\ell_0, C_1=C_1(\ell_0, \delta)$, both positive, such that, for any $\ell\ge\ell_0,\lambda\in(\delta, \delta^{-1})$, 
$$\left|r\left(m,\ell\right)-\rho(\lambda)\right|\le\frac{C_1}{\ell }.$$
\end{theo}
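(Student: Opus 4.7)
The plan is to deduce Theorem \ref{transitionp} from the quantitative Good-type asymptotic for Stirling numbers of the second kind established later in the paper (Theorem \ref{goodplus}). I will apply that asymptotic, of the shape
\begin{equation*}
\genfrac{\{}{\}}{0pt}{}{m}{\ell}=\frac{m!}{\ell!}\,\frac{(e^{\impl}-1)^{\ell}}{\impl^{m}}\,\frac{1}{\sqrt{2\pi\ell v}}\,\bigl(1+O(\ell^{-1})\bigr),
\end{equation*}
with $\impl=\impl(m,\ell)$ and $v=v(m,\ell)$ as in Section~\ref{sec:stirling}, to both the numerator $\genfrac{\{}{\}}{0pt}{}{m-1}{\ell-1}$ and the denominator $\genfrac{\{}{\}}{0pt}{}{m}{\ell}$ of $r(m,\ell)$, and cash in on the resulting cancellations. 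The factorial prefactor reduces to $\ell/m$, while the saddle-point identity $\impl e^{\impl}/(e^{\impl}-1)=m/\ell$ (a rephrasing of (\ref{zeta})) collapses the main diagonal term:
\begin{equation*}
\frac{\ell}{m}\cdot\frac{\impl}{e^{\impl}-1}=e^{-\impl}=\rho(\lambda),
\end{equation*}
which is exactly the target value, while the smoothness of $v$ in $\lambda$ on compacts of $(0,\infty)$ makes the prefactor ratio $\sqrt{\ell v/((\ell-1)v')}$ equal to $1+O(1/\ell)$.

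What has to be controlled is the mismatch between the two saddle points $\impl=\impl(m,\ell)$ and $\impl'=\impl(m-1,\ell-1)$. Since $\lambda'-\lambda=\lambda/(\ell-1)=O(1/\ell)$, the implicit function theorem applied to $x\mapsto(1+\lambda)(1-e^{-x})-x$ yields $\delta:=\impl'-\impl=O(1/\ell)$, uniformly for $\lambda\in(\delta,\delta^{-1})$. The crux is then to show that the shift factor
\begin{equation*}
R=\Bigl(\tfrac{\impl}{\impl'}\Bigr)^{m-1}\Bigl(\tfrac{e^{\impl'}-1}{e^{\impl}-1}\Bigr)^{\ell-1}
\end{equation*}
is $1+O(1/\ell)$ despite carrying powers of order $\ell$. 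Expanding $\log R$ to first order in $\delta$ gives a linear coefficient
\begin{equation*}
-\frac{m-1}{\impl}+\frac{(\ell-1)\,e^{\impl}}{e^{\impl}-1};
\end{equation*}
here the saddle-point identity kills the leading $m/\impl$ and $\ell e^{\impl}/(e^{\impl}-1)$ contributions against each other, leaving $1/\impl-e^{\impl}/(e^{\impl}-1)$, a bounded function of $\impl$ on the relevant compact range. Hence $\log R=\delta\cdot O(1)+O(\ell\delta^{2})=O(1/\ell)$, and combining all pieces delivers $r(m,\ell)=\rho(\lambda)(1+O(1/\ell))$, which, since $\rho$ is bounded on $(\delta,\delta^{-1})$, yields the announced additive $C_{1}/\ell$ bound.

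The main obstacle, and the reason this argument can be presented as a near-corollary of Theorem \ref{goodplus}, is that every step above relies on having a saddle-point asymptotic with a \emph{uniform} relative error $O(1/\ell)$ across $\lambda\in(\delta,\delta^{-1})$, and with matching uniformity at the shifted parameters $(m-1,\ell-1)$; if the error were only pointwise, or if it deteriorated as $\lambda$ approached $0$ or $\infty$, the cancellations would not survive. Producing that uniform bound is the analytical work of Sections~\ref{saddle} and \ref{endix}; once it is granted, Theorem \ref{transitionp} reduces to the algebraic collapse described above.
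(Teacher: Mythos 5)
Your proposal is correct and follows essentially the same route as the paper: both deduce Theorem \ref{transitionp} from Theorem \ref{goodplus} by comparing $r(m,\ell)$ to the ratio $\tilde r=\psi(m-1,\ell-1)/\psi(m,\ell)$, and then extract $\rho(\lambda)$ from $\tilde r$ by cashing in on the first-order cancellation that the saddle-point identity produces between the two shifted saddle points $\impl(m,\ell)$ and $\impl(m-1,\ell-1)$, with the residual being $O(\ell\,\Delta\impl^2)=O(1/\ell)$. The paper packages the same computation as $\theta=A+B$ (where $A/\ell$ is precisely the combination whose first-order Taylor term vanishes, cf.\ Proposition \ref{Taylor}), and your $\log R$ decomposition captures the identical cancellation; the only caveat is a cosmetic one, namely your reuse of the symbol $\delta$ both for the range parameter and for $\impl'-\impl$.
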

\noindent This bound proves to be crucial to our aims, for $r\left(m,\ell\right)$ and $1-r\left(m,\ell\right)$  can be seen as transition probabilities for a random walk closely related to the completion curve $\zeta   _{n}$, cf. Proposition \ref{rwstirling}. At Section \ref{sec:concl}, we describe $C_{1}$. To prove Theorem \ref{transitionp}, we need a  refinement of the asymptotic study of ${m\brace \ell}$, originally made in \cite{MR0120204}: set
\begin{equation*}
\psi(m,\ell)=\frac{1}{2\pi}\ \frac{m!}{\ell !}\ \left(\frac{e^{\impl  }-1}{\impl  ^{1+\lambda}}\right)^{\ell }\ \sqrt{\frac{\pi}{ v\ell}}.
\end{equation*}
In Good \cite{MR0120204},  $\psi$ takes the alternative form
$$\psi(m,\ell)=\frac{m!(e^{\impl  }-1)^{\ell }}{\ell !\impl  ^{m}\sqrt{2\pi m\Big(1-\frac{m}{\ell }e^{-\impl  }\Big)}}.$$

As a first step toward Theorem \ref{transitionp}, Good, followed by many others, established that $\psi(m,\ell)$
is an estimate of the corresponding Stirling number: 
\begin{theo}[\cite{MR0120204}]
\label{good}
When $\ell $ and $m$ both grow towards $+\infty$, with $m=\Theta(\ell )$,
\begin{equation*}
{m\brace \ell}\sim\psi(m,\ell).
\end{equation*}
\end{theo}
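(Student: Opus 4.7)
The plan is to apply the saddle-point method to the Cauchy integral representation obtained from the exponential generating function $\sum_{m\ge 0}{m\brace\ell}z^m/m!=(e^z-1)^\ell/\ell!$, namely
$${m\brace\ell}=\frac{m!}{\ell!}\cdot\frac{1}{2\pi i}\oint\frac{(e^z-1)^\ell}{z^{m+1}}\,dz.$$
Parametrize the contour as the circle $z=\impl e^{i\theta}$, $\theta\in[-\pi,\pi]$, where $\impl=\impl(m,\ell)$ is the positive saddle. The saddle equation $\frac{d}{dz}\bigl[\ell\log(e^z-1)-m\log z\bigr]\big|_{z=\impl}=0$ reads $\frac{\impl e^\impl}{e^\impl-1}=\frac m\ell=1+\lambda$, which, after multiplication by $e^{-\impl}$, is exactly the defining relation \eqref{zeta} for $\impl$. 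Writing $g(\theta):=\ell\log(e^{\impl e^{i\theta}}-1)-m\log(\impl e^{i\theta})$, the integral becomes $\frac{1}{2\pi}\int_{-\pi}^{\pi}e^{g(\theta)}d\theta$.

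Next, I expand $g$ around the saddle. The second derivative in $\theta$ equals $-\impl^2\frac{d^2}{dz^2}\bigl[\ell\log(e^z-1)-m\log z\bigr]\big|_{z=\impl}$. Using the saddle identity $\frac{\impl e^\impl}{e^\impl-1}=1+\lambda$, a direct computation (together with $(1+\lambda)e^{-\impl}=(1+\lambda)-\impl$, which follows from \eqref{zeta}) gives $g''(0)=-\ell(1+\lambda)(\impl-\lambda)=-2\ell v$, so
$$g(\theta)=g(0)-\ell v\,\theta^2+O(\ell\theta^3).$$

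Now split the integral at $\theta_0:=\ell^{-2/5}$. On the central window $|\theta|\le\theta_0$, the cubic error is $O(\ell\theta_0^3)=O(\ell^{-1/5})=o(1)$, so Laplace's method yields
$$\int_{-\theta_0}^{\theta_0}e^{g(\theta)}\,d\theta\sim e^{g(0)}\int_{\mathbb R}e^{-\ell v\theta^2}\,d\theta=e^{g(0)}\sqrt{\tfrac{\pi}{\ell v}}.$$
Since $e^{g(0)}=(e^\impl-1)^\ell/\impl^m$ and $\impl^m=\impl^{(1+\lambda)\ell}$, multiplying by $\frac{m!}{\ell!}\cdot\frac{1}{2\pi}$ reproduces exactly the expression for $\psi(m,\ell)$.

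The main obstacle is the tail estimate on $\theta_0\le|\theta|\le\pi$: one must show $\mathrm{Re}\,g(\theta)-g(0)\le -c\,\ell^{1/5}$ uniformly as $\lambda=m/\ell-1$ stays in a compact subset of $(0,\infty)$. The geometric input is that the modular ratio $\varphi(\theta):=|e^{\impl e^{i\theta}}-1|/(e^\impl-1)$ is a strictly log-concave even function on $(-\pi,\pi]$, peaking at $\theta=0$ with value $1$; concretely, $\log\varphi(\theta)\le-c(\lambda)\,\theta^2$ on all of $[-\pi,\pi]$, where $c(\lambda)$ is continuous and positive in $\lambda$, so it is bounded below by a constant on any compact $\lambda$-range. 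Raising to the power $\ell$ gives the required tail bound, and combining with the Laplace estimate on the central window yields ${m\brace\ell}=\psi(m,\ell)(1+o(1))$, which is Theorem \ref{good}; refining the Laplace expansion to keep the quartic term would upgrade this to the quantitative form needed downstream for Theorem \ref{transitionp}.
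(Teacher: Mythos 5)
Your route is the same saddle–point contour used by the paper (Section \ref{saddle}): the Cauchy integral on $z=\impl e^{i\theta}$, the identification of the saddle equation with the defining relation \eqref{zeta}, the computation $g''(0)=-2\ell v$, and the Gaussian approximation on a shrinking central arc. The central estimate is fine: with $\theta_0=\ell^{-2/5}$ one has both $\ell\theta_0^3\to 0$ and $\theta_0\sqrt{\ell v}\to\infty$, so the Laplace step reconstructs $\psi(m,\ell)$ exactly, as you verify.

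The gap is the tail estimate. You assert as ``geometric input'' that $\varphi(\theta)=|e^{\impl e^{i\theta}}-1|/(e^\impl-1)$ is strictly log-concave on $(-\pi,\pi]$ and deduce $\log\varphi(\theta)\le -c(\lambda)\theta^2$. That log-concavity is not proved, and it is in fact false as stated: already for the closely related power series with nonnegative coefficients $e^{re^{i\theta}}$ one has $\log|e^{re^{i\theta}}|=r\cos\theta$, which is convex on $[\pi/2,\pi]$; positivity of the Taylor coefficients $b_k=\impl^k/(k+1)!$ does not by itself give concavity of $\theta\mapsto\log|B(\impl e^{i\theta})|$ over the whole circle. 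The quadratic decay bound you need is true, but it requires a separate argument. The paper proves exactly this as Lemma \ref{MeirMoon} (following Meir--Moon): one isolates the first two terms of $B(\impl e^{i\theta})=\sum_k b_k e^{ik\theta}$, uses
$$|b_0+b_1e^{i\theta}|\le b_0+b_1-\frac{2b_0b_1}{b_0+b_1}\sin^2\!\bigl(\tfrac{\theta}{2}\bigr),$$
bounds the rest by $\sum_{k\ge2}b_k$, and then applies $\sin^2(\theta/2)\ge\theta^2/\pi^2$ on $[-\pi,\pi]$ to get $|B(\impl e^{i\theta})|\le B(\impl)e^{-h(\impl)\theta^2}$ with $h(\impl)>0$ continuous. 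Replace your log-concavity appeal by this (or any equivalent) derivation of the quadratic tail bound, and the argument closes; as you note, keeping the next Taylor terms then yields the quantitative version Theorem \ref{goodplus}.
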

Though \cite[(3)]{MR0120204} hints at an asymptotic expansion for the relative error:
$$\chi(m,\ell)\ =\ \dfrac{{m\brace \ell}-\psi(m,\ell)}{\psi(m,\ell)},$$
it does not really provide a bound for $\chi$, while such a bound is needed to prove Theorem \ref{transitionp}. So Sections \ref{saddle} and \ref{endix}  are devoted to the proof of the following bound, of independent interest :
\begin{theo}
\label{goodplus}
 For any $\delta\in(0,1)$, there exist $\ell_0, C_2 =C_2(\ell_0, \delta)$, both positive, such that for any $\ell\ge\ell_0$,and for $\lambda(m,\ell)\in\left(\delta, \tfrac1\delta\right)$,
\begin{equation*}
\left|\dfrac{{m\brace \ell}-\psi(m,\ell)}{\psi(m,\ell)}\right|\ \le\ \frac{C_2}{\ell }.
\end{equation*}
\end{theo}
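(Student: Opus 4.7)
The plan is to perform a quantitative saddle-point analysis of the classical contour integral
\[
{m\brace \ell} \;=\; \frac{m!}{\ell!}\,\frac{1}{2\pi i}\oint \frac{(e^{z}-1)^{\ell}}{z^{m+1}}\,dz,
\]
choosing as contour the circle $|z|=\impl$. The defining relation $\impl=(1-e^{-\impl})(1+\lambda)$ places $\impl$ exactly at the real saddle of the integrand (up to an $O(1/\ell)$ shift, harmless for our purposes), and Good's formula $\psi(m,\ell)$ is precisely the leading Gaussian output of the method. My task is to bound the next-order error by $C_{2}/\ell$, uniformly in $\lambda\in(\delta,\delta^{-1})$.

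After substituting $z=\impl e^{i\theta}$, I would write
\[
\frac{{m\brace\ell}}{\psi(m,\ell)} \;=\; \sqrt{\tfrac{v\ell}{\pi}}\int_{-\pi}^{\pi} e^{\ell\,h_{\lambda}(\theta)}\,d\theta,
\]
where $h_{\lambda}$ is explicit in terms of $\impl$ and $\lambda$, real-analytic in $\theta$, with $h_{\lambda}(0)=0$, $h_{\lambda}'(0)=0$, and $h_{\lambda}''(0)=-2v$. The first step is then to split the integral at $\theta_{0}=\ell^{-2/5}$ (any exponent in $(1/3,1/2)$ would do) into a central piece and a tail.

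For the central piece, I Taylor expand
\[
h_{\lambda}(\theta) \;=\; -v\theta^{2}+ia_{3}(\lambda)\theta^{3}+a_{4}(\lambda)\theta^{4}+O(\theta^{5}),
\]
with $a_{j}(\lambda)$ bounded uniformly for $\lambda\in(\delta,\delta^{-1})$ since $\impl$ stays in a compact subset of $(0,\infty)$. The change of variables $\theta=u/\sqrt{\ell}$ turns the central contribution into a Gaussian integral; the cubic term is odd and integrates to zero, the quartic term and the square of the cubic each produce an $O(1/\ell)$ correction, and higher orders give $O(\ell^{-2})$. Extending the resulting Gaussian to $\mathbb{R}$ costs only $O(e^{-v\ell^{1/5}})$. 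This yields the central estimate $1+O(1/\ell)$, with an explicit constant depending only on $\sup_{\lambda\in(\delta,\delta^{-1})}(|a_{3}|+|a_{4}|+\ldots)$ and $\inf_{\lambda\in(\delta,\delta^{-1})} v$.

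The tail is the delicate part. Here I would show that there exists $\kappa=\kappa(\delta)>0$ with
\[
\mathrm{Re}\,h_{\lambda}(\theta) \;\le\; -\kappa, \qquad |\theta|\ge\theta_{0},\;\lambda\in(\delta,\delta^{-1}),
\]
so that the tail contributes $O(\sqrt{\ell}\,e^{-\kappa\ell})\ll 1/\ell$. For each fixed $\impl$, strict concavity of $\mathrm{Re}\,h_{\lambda}$ at $\theta=0$ combined with the fact that $\theta=0$ is the \emph{unique} maximum of $\theta\mapsto \mathrm{Re}\,h_{\lambda}(\theta)$ on $[-\pi,\pi]$ (a classical fact for saddle-point analysis on this generating function, following from $|e^{\impl e^{i\theta}}-1|\le e^{\impl}-1$ with equality only at $\theta=0$) gives pointwise coercivity; then compactness of the closure of $\{\impl(\lambda):\lambda\in(\delta,\delta^{-1})\}$ in $(0,\infty)$ upgrades this to a uniform $\kappa(\delta)$. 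The \emph{main obstacle} is precisely this step: verifying that $\kappa(\delta)$ does not degenerate at the endpoints and producing an explicit constant $C_{2}=C_{2}(\ell_{0},\delta)$, which requires tracking how $v$, the Taylor coefficients $a_{j}$, and the tail constant $\kappa$ each depend on $\delta$ through the compact range of $\impl$. Combining the two pieces then gives the announced bound $|{m\brace\ell}/\psi(m,\ell)-1|\le C_{2}/\ell$.
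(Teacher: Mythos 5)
Your strategy — Cauchy integral on the circle $|z|=\impl$, split at $\theta_0$ into a central Gaussian piece and a tail, Taylor expansion to order 4 with fifth-order remainder, uniformity in $\lambda$ via the compact range of $\impl$ — is the same saddle-point route the paper takes. The cosmetic differences (you expand $h_\lambda=\log g$ rather than $g$ itself, and you take $\theta_0=\ell^{-2/5}$ rather than $\ln\ell/\sqrt{\ell}$) do not change the substance, and your identification of the central correction as $O(1/\ell)$ coming from the quartic term plus the square of the cubic is exactly right.

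There is, however, a genuine error in your treatment of the tail. You assert that there is a constant $\kappa=\kappa(\delta)>0$ with $\mathrm{Re}\,h_\lambda(\theta)\le-\kappa$ for all $|\theta|\ge\theta_0$, and deduce a tail contribution $O(\sqrt{\ell}\,e^{-\kappa\ell})$. This cannot be true as stated: since $\theta_0=\ell^{-2/5}\to 0$ and $h_\lambda(\theta)=-v\theta^2+O(\theta^3)$ near the origin, at the cutoff one has $\mathrm{Re}\,h_\lambda(\theta_0)\sim -v\ell^{-4/5}\to 0$, so no fixed positive $\kappa$ can work uniformly down to $|\theta|=\theta_0$. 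The clean way to handle the tail is the quadratic pointwise bound the paper proves (Lemma \ref{MeirMoon}, following Meir–Moon): $|g(\theta)|\le e^{-h(\impl)\theta^2}$ on all of $[-\pi,\pi]$, so that $\int_{\theta_0}^{\pi}|g|^{\ell}\le \pi\,e^{-h(\impl)\ell\theta_0^2}$. With your $\theta_0=\ell^{-2/5}$ this gives $O(e^{-c\ell^{1/5}})$, not $O(e^{-\kappa\ell})$ — still comfortably $o(1/\ell)$, so the conclusion survives, but the claimed exponential rate and the reasoning behind it (a uniform negative ceiling on $\mathrm{Re}\,h_\lambda$ down to $\theta_0$) are both wrong. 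Separately, your route to uniformity in $\lambda$ — invoking compactness of $\impl([\delta,\delta^{-1}])$ and continuity of the Taylor coefficients and of $v$ — yields the existence of some $C_2(\delta)$ but not an explicit one; the paper instead tracks all constants by hand (bounding $|g^{(5)}|$ via Poisson moments and the inequalities $\lambda\le\impl\le 1+\lambda$), which is what you would need if you want the quantitative constant you advertise at the end. Your argument is otherwise sound, but the tail step needs the quadratic decay lemma rather than a flat $\kappa$.
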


\section{The asymptotic behavior of the  completion curve}
\subsection{A random walk related to Stirling numbers }
\label{sec:rwstirling}
In this section, with the help of Theorem \ref{goodplus}, we prove Theorem \ref{mainprofile}, about the asymptotic behavior of the completion curve  of an impatient coupon collector. For a suitable elementary (small) step  $\tilde{h}$, to be defined later in the section, we shall prove that
\begin{equation}
\zeta   _{n}\left((\ell+1) \tilde{h}\right)-\zeta   _{n}(\ell \tilde{h})=\tilde{h}F(\ell \tilde{h},\zeta   _{n}(\ell \tilde{h}))+\rounding_{\ell},
\end{equation}
in which $\rounding_{\ell}=o\left(\tilde{h}\right)$, while, by definition,
\begin{eqnarray*}
\zeta   \left((\ell+1) \tilde{h}\right)-\zeta   \left(\ell \tilde{h}\right) &=\ \int_{\ell \tilde{h}}^{(\ell+1) \tilde{h}}
\ F(u,\zeta   (u))du
\\
&=\ \tilde{h}F(\ell \tilde{h},\zeta   (\ell \tilde{h}))+o\left(\tilde{h}\right).
\end{eqnarray*}
Then $\zeta   _{n}$ is the result of an Euler scheme with \emph{rounding errors} $\rounding_{\ell}$. As such,  $\zeta   _{n}$  provides a stochastic approximation for $\zeta$, in the spirit of \cite{MR1767993,MR1485774}.

Actually, time-reversed versions of $\zeta   $ and $\zeta   _{n}$, that start at time $1+\Lambda$ and end at time 0, are more convenient,  for the approximations of Stirling numbers that we use are much worse for small arguments, making the convergence trickier when $(t,\zeta   (t))$ and $(t,\zeta   _n(t))$ are close to $(0,0)$. The bound on  $\rounding_{\ell}$ is obtained through probabilistic and combinatorial tools applied to the discrete version of $\zeta   _{n}$, before it is rescaled: for any surjection $\omega$, consider a time-reversed version $Z^{(n)}$ of the completion curve  $Y_n$ of $\omega$, defined, for $t\in [0,N]   $, by
\begin{align*}
Z^{(n)}_{t}(\omega)&=Y_n(N-t,\omega).
\end{align*}
Actually the corresponding point of the curve has coordinates $W_{t}=(N-t, Z^{(n)}_t)$, and under $\mathbb{P} _{N,n} $, the probability distribution of $W = (W_k)_{k\in[\![0,N]\!]}$  has a slick description in terms of Stirling numbers of the second kind. 

\begin{figure}[ht]
\centering
\includegraphics[width=7cm]{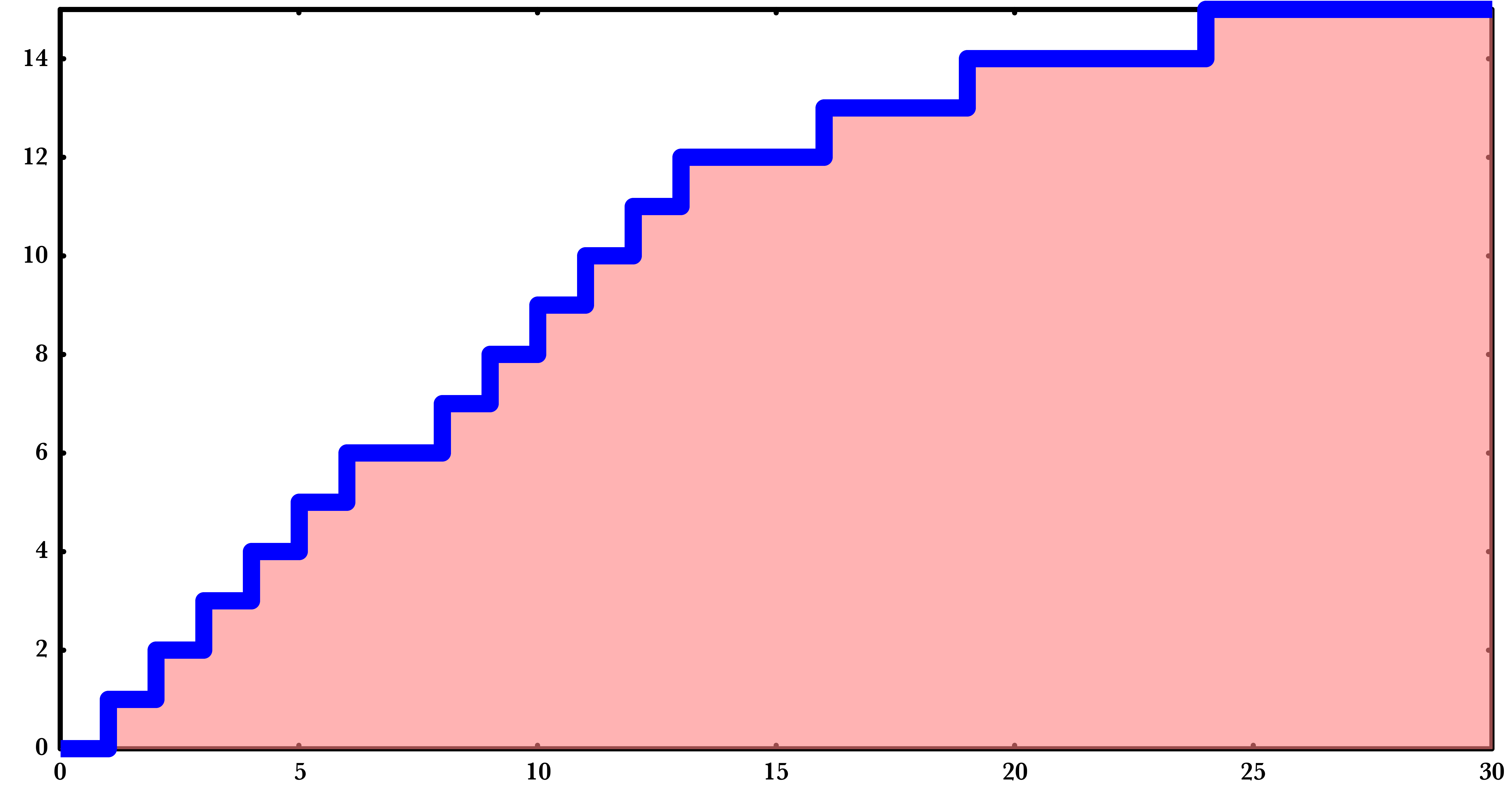} \hspace{0.5cm}\includegraphics[width=7cm]{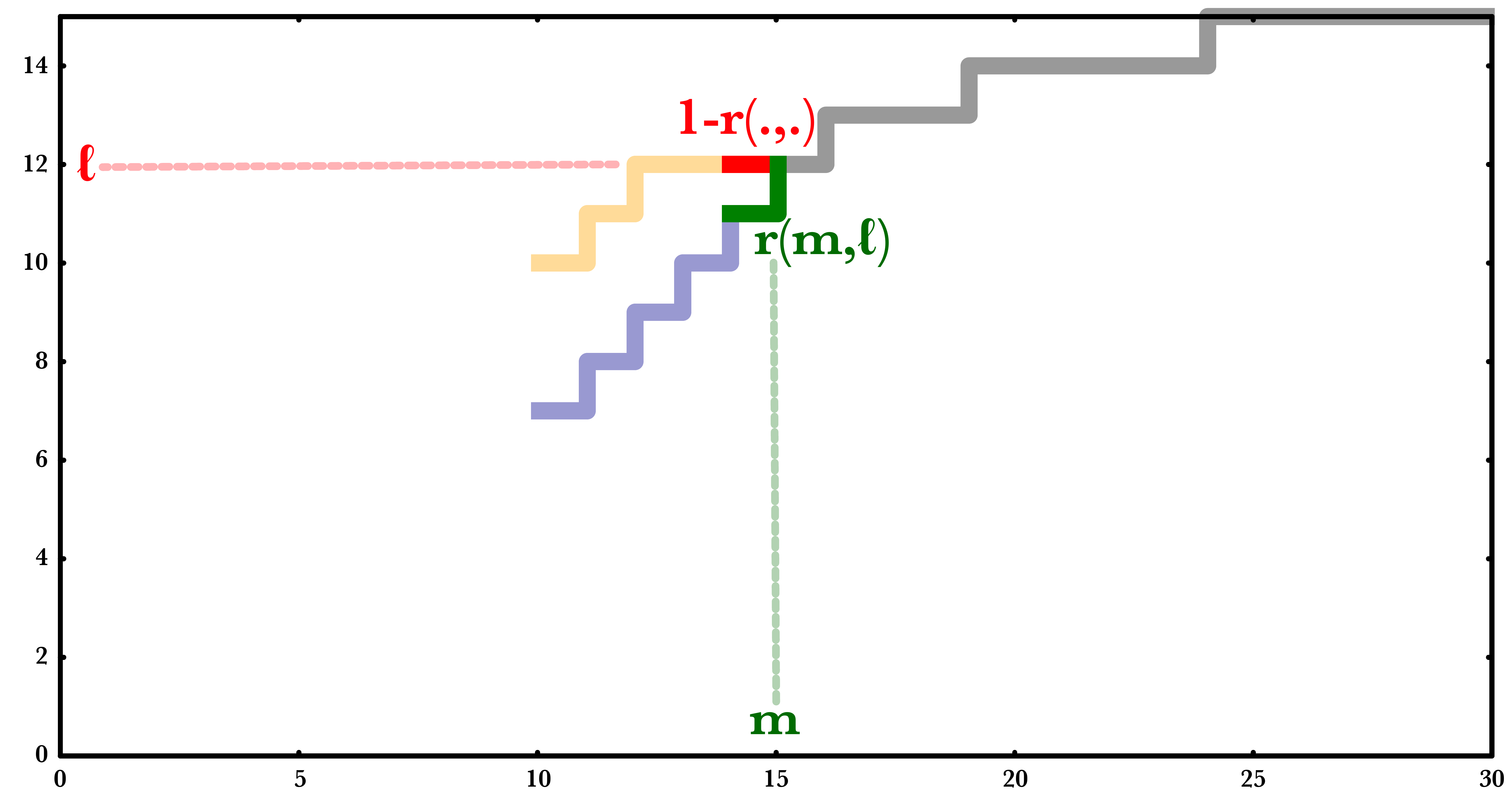}
\label{Markov}
\end{figure}

\begin{prop}
\label{rwstirling}
$W$ is a Markov chain starting at $(N, n)$, with transition probabilities described, for $0\leq \ell\leq m $, by :
$$p_{\left( m,\ell\right) ,\left( m-1,\ell-1\right)} =1-p_{\left( m,\ell\right) ,\left( m-1,\ell\right)}=\dfrac{{m-1\brace \ell-1}}{{m\brace \ell}}= r\left(m,\ell\right).
$$
\end{prop}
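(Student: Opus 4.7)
The plan is to recast the statement as a combinatorial fact about uniformly random surjections, extract the Markov property from a factorization of the uniform measure, and then compute the transition kernel by a direct count. Setting $V_{j}(\omega)=\{\omega_{1},\ldots,\omega_{j}\}$, we have $y_{j}(\omega)=|V_{j}(\omega)|$, so $W_{k}=(N-k,|V_{N-k}|)$; under $\mathbb{P}_{N,n}$ the prefix $\omega_{[N]}$ is a uniform surjection onto $[\![1,n]\!]$, whence $W_{0}=(N,n)$.

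The structural observation I would isolate as a lemma is the following factorization of the uniform measure on surjections. For any $\ell$-subset $S\subset[\![1,n]\!]$ and any $0\le k\le N-1$, the surjections $[\![1,N]\!]\to[\![1,n]\!]$ with $V_{N-k}=S$ are in canonical bijection with
\[\bigl\{\text{surjections }[\![1,N-k]\!]\to S\bigr\}\times\bigl\{\text{sequences in }[\![1,n]\!]^{k}\text{ whose image covers }[\![1,n]\!]\setminus S\bigr\}.\]
Consequently, conditional on $V_{N-k}=S$, the prefix $(\omega_{1},\ldots,\omega_{N-k})$ is uniformly distributed on the $\ell!{N-k\brace \ell}$ surjections $[\![1,N-k]\!]\to S$ and is independent of the suffix $(\omega_{N-k+1},\ldots,\omega_{N})$.

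The Markov property follows from this factorization. Conditioning on $(W_{0},\ldots,W_{k})$ imposes constraints on $|V_{N-j}|$ for $j\le k$; those with $j<k$ only concern the suffix, so after further fixing $V_{N-k}=S$ (consistent with $W_{k}$) the prefix keeps its uniform law on surjections onto $S$. Since $V_{N-k-1}$ is a function of the prefix alone --- it equals $V_{N-k}\setminus\{\omega_{N-k}\}$ when $\omega_{N-k}$ is unique in $(\omega_{1},\ldots,\omega_{N-k})$, and $V_{N-k}$ otherwise --- its conditional law given the past depends only on $V_{N-k}$, and by the symmetry of the uniform measure only on $|V_{N-k}|=\ell$.

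It remains to compute the transition to $(m-1,\ell-1)$ with $m=N-k$. Among the $\ell!{m\brace \ell}$ surjections $[\![1,m]\!]\to S$, those in which $\omega_{m}$ appears only at position $m$ decompose as a choice $\omega_{m}=c\in S$ ($\ell$ options) followed by a surjection $[\![1,m-1]\!]\to S\setminus\{c\}$ ($(\ell-1)!{m-1\brace \ell-1}$ options), for a total of $\ell!{m-1\brace \ell-1}$; dividing gives $r(m,\ell)$. The complementary probability $1-r(m,\ell)=\ell{m-1\brace \ell}/{m\brace \ell}$ agrees with the other transition via the recurrence \eqref{triangle}. The one delicate point is the clean articulation of the Markov property, since the conditioning comes from the entire past; once the factorization lemma is in place, the rest is bookkeeping.
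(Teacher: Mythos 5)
Your proof is correct, and it takes a genuinely different route from the paper's. The paper proceeds by brute force: it writes down the probability $p_{z}$ of an \emph{entire} sample path $z=(z_{0},\ldots,z_{N-m})$ of $Z^{(n)}$ as an explicit product (one factor per step, coming from counting how many surjections realize $z$), then forms the ratio $p_{z\cdot(\ell-1)}/p_{z}$ and observes that it collapses to ${m-1\brace \ell-1}/{m\brace \ell}$, which depends only on the endpoint --- this single computation delivers the Markov property and the transition kernel simultaneously, but it does so by exhibiting the full product form of the trajectory law. You instead factor the argument through a conditional-independence lemma: given $V_{N-k}=S$, the prefix $(\omega_{1},\ldots,\omega_{N-k})$ is uniform on surjections onto $S$ and independent of the suffix, so conditioning on $(W_{0},\ldots,W_{k-1})$ (which, once $V_{N-k}$ is fixed, constrains only the suffix) cannot disturb the prefix, and the one-step transition is then a clean count of surjections $[\![1,m]\!]\to S$ with $\omega_{m}\notin\{\omega_{1},\ldots,\omega_{m-1}\}$, namely $\ell\cdot(\ell-1)!{m-1\brace\ell-1}$ out of $\ell!{m\brace\ell}$. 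The payoff of your route is conceptual cleanliness --- the Markov property is seen as a consequence of a transparent structural fact about uniform surjections rather than of an algebraic cancellation in a path-probability formula --- and the one-step count is simpler to audit than the paper's per-step factor $\bigl(z_{k}\ind_{y_{k}=0}+\ind_{y_{k}\neq 0}\bigr)$; the paper's approach, on the other hand, gives the full finite-dimensional law of $W$ in closed form as a byproduct, which can be useful elsewhere but is not needed for the stated proposition. Both arguments close the loop on the complementary transition $1-r(m,\ell)=\ell{m-1\brace\ell}/{m\brace\ell}$ via the recurrence \eqref{triangle}, and your invocation of the symmetry of the uniform measure to pass from dependence on $V_{N-k}$ to dependence on $|V_{N-k}|$ is the right point to make explicit.
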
 
In other words,  $Z^{(n)}$ is an \emph{inhomogeneous} Markov chain, with  increments $\incr_{k+1}= Z^{(n)}_{k+1  }- Z^{(n)}_k $ satisfying
\begin{equation}
	\label{bias}
	\mathbb{E } \left[ \incr_{k+1}\,\vline\, Z^{(n)}_{k}\right] =-r\left( W_{k}\right)\simeq -\rho\left( \dfrac{N-k-Z^{(n)}_k}{Z^{(n)}_k}\right).
\end{equation}
\begin{proof}
Let us compute the probability $p_z$ of a sample path 
$$z=\left( z_{0},z_{1},\ldots ,z_{N-m}\right) $$ for $Z $, in which $z_0=n $ and $z_{N-m}=\ell$: the restriction to $[\![1,m]\!]$ of any surjection $\omega$ resulting in $z $ has $\ell$ elements in its image, leading to ${m\brace \ell}n_{\downarrow\ell}$ choices for this restriction, then at each step $z_{k}\rightarrow z_{k-1}$ we have either $z_{k}$ choices for $\omega(k-1)$ if $y_{k}= z_{k}-z_{k-1}=0$, or  $n-z_{k}$ choices for $\omega(k-1)$ if $y_{k}= -1$. The second case happens $n-\ell $ times exactly, and produces a factor $n-\ell!$. Thus
\begin{align*}
	p_{z}&= {m\brace \ell}n_{\downarrow \ell}\left( \prod ^{1}_{k=N-m} \left(z_{k}\ind_{y_{k}=0}+\ind_{y_{k}\neq0}\right)\right) \left( n-\ell\ !\right) \left( {N\brace n} n!\right) ^{-1}
\\
&= {m\brace \ell}\left( \prod ^{1}_{k=N-m} \left(z_{k}\ind_{y_{k}=0}+\ind_{y_{k}\neq0}\right)\right)  {N\brace n}^{-1},
\end{align*}
while, if $z.\ell -1$ denotes the path $\left( z_{0},z_{1},\ldots ,z_{N-m}, \ell-1 \right) $—seen as a word—, we have, by the same formula, since $y _{N-m+1} =-1$ :
 $$p_{z.\ell-1 }= {m-1\brace \ell-1}\left( \prod ^{1}_{k=N-m} \left((-1+z_{k})\ind_{y_{k}=0}+1 \right)\right)  {N\brace n}^{-1}.$$
Thus the expression
$$\mathbb{P} \left( Z^{(n)}_{N-m+1}=\ell-1\,\vline\,\left( Z^{(n)}_{0},Z^{(n)}_{1},\ldots ,Z^{(n)}_{N-m}\right) =z\right)=\dfrac{p_{z.\ell-1 }}{p_{z}}=\dfrac{{m-1\brace \ell-1}}{{m\brace \ell}}=r(m,\ell)$$
depends only on the final part of the sample path, on the couple  $\left( Z^{(n)}_{N-m},Z^{(n)}_{N-m+1}\right)=(\ell,\ell-1)$. As a consequence, $W$ satisfies the Markov property, and $r(m,\ell)$ is its transition probability, as expected.\end{proof}
\subsection{Azuma inequality} 
Theorem \ref{mainprofile} is a consequence of the following chain of approximations:
\begin{align*}
dY_k=-dZ^{(n)}_{N-k}=-\incr_{N-k}
&\simeq-\mathbb{E} \left( \incr_{N-k}\right)\\ 
&=r(k,Y_k)\\
&\simeq \rho \left( \dfrac{k}{Y_{k}}-1\right) =F\left(k,Y_{k}\right) ,
\end{align*}
and its proof results from bounds for the errors in this chain of approximation, as explained before. The first error is bounded with the help of the Azuma-Hoeffding inequality, as usual when the approximation stems from the law of large numbers, while the bound for the second error, given by Theorem  \ref{transitionp}, follows from  the saddle-point method, as explained in Section \ref{saddle}. In order to use an Euler scheme, let us now divide the path into a sequence of, approximately, $\left( 1+\Lambda \right) \times n^{\beta}$ infinitesimal intervals, each of these intervals being a sequence of $h=\lfloor n^{\alpha}\rfloor$ steps, $\alpha+\beta=1, \alpha,\beta>0$.   Consider then an integer $t$ of the form $j h$, $j\in\N$, so that $t$ is the beginning of some interval, and  $t+h$ is the end of the same interval. Then 
\[Z^{(n)}_{t+h}-Z^{(n)}_{t}=A(j,h)+B(j,h)-hF\left(W_{t}\right),\]
in which:
\begin{align*}
\varepsilon_{k}&=\incr_{k+1}-\mathbb{E}\left[\incr_{k+1}\mid Z^{(n)}_{k}\right],
\\
A(j,h)&=\sum\limits_{s=0}^{h-1}\varepsilon_{t+s},
\\
B(j,h)&=\sum\limits_{s=0}^{h-1}\left(\mathbb{E}\left[\incr_{t+s+1}\mid Z^{(n)}_{t+s}\right]+F\left(W_{t}\right)\right)
\\
&=\sum\limits_{s=0}^{h-1}\left(F\left(W_{t}\right)-r\left(W_{t+s}\right)\right),
\end{align*}
the last equality due to \eqref{bias}. Rescaling time and space by a factor $1/n$, we set $\tilde{h}=h/n$ and 
$$\rounding_{j}=n^{-1}\left(A(j,h)+B(j,h)\right).$$
Finally, for $\eta, \delta\in (0,1) $,  we set
$$\coin_{\eta, \delta}=\left\{ \left( x,y\right),\,x > \eta,\,\lambda\left( x,y\right)\in \left( \delta ,\delta^{-1}\right) \right\} =\left\{ \left( x,y\right) ,\,x > \eta,  \tfrac{\delta x}{1+\delta} \le y\le \tfrac{ x}{1+\delta} \right\},$$ 
in such a way that, according to Theorem \ref{goodplus},  $|\ell\chi(m,\ell)|$ is uniformly bounded for $(m,\ell)$ in $n\coin_{\eta, \delta}$, as long as $n$ is large enough, and the same holds true for Theorem \ref{transitionp}. Now, for $x\in[\eta,1+\Lambda]$, by geometric considerations,
\begin{align}
\label{recurse}
\left\{\left( x,y\right) \in\coin_{\eta, 2\delta}\ \textrm{and}\ \left\vert y-z\right\vert\le \dfrac{\delta\eta}6\right\}\Rightarrow\left\{\left( x,z\right) \in\coin_{\eta, \delta}\right\}.
\end{align}
Section \ref{sec:stirling} entails that
\begin{lem}
\label{lemmalipsch}
For $n$ large enough, and for $\eta, \delta\in (0,1) $,  if $n^{-1}W_{jh}\in \coin_{\eta, 2\delta}$ and $N-(j+1)h\ge\eta n$, we have 
\begin{equation}
\label{round2}
n^{-1}B(j,h)\le \dfrac {8}{\eta}\   n^{2\alpha-2}.
\end{equation}
\end{lem}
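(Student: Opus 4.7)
The plan is to write $B(j,h)$ as a sum of two discrepancies and bound each by $O(n^{\alpha-1})$ per summand. Since $F(W_t)=\rho(\lambda(W_t))$ (the map $F(x)=\exp(-1-x-W_0(-(1+x)e^{-1-x}))$ is just $\rho$ reparametrized by $\lambda=x/y-1$), write
\[
F(W_{t})-r(W_{t+s})\;=\;\bigl[\rho(\lambda(W_{t}))-\rho(\lambda(W_{t+s}))\bigr]\;+\;\bigl[\rho(\lambda(W_{t+s}))-r(W_{t+s})\bigr].
\]
The first bracket measures the local Lipschitz variation of $F$ along the Euler interval, while the second is precisely the object controlled by Theorem \ref{transitionp}.

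First I would verify that $n^{-1}W_{t+s}\in\coin_{\eta,\delta}$ for every $s\in[\![0,h-1]\!]$. The first coordinate $(N-t-s)/n$ is at least $(N-(j+1)h)/n\ge\eta$ by hypothesis. For the second, the monotone decrease of $Y$ over $h=\lfloor n^{\alpha}\rfloor$ steps gives $|Y_{t+s}/n-Y_{t}/n|\le n^{\alpha-1}$, which is at most $\delta\eta/6$ once $n$ is large. The geometric implication \eqref{recurse}, combined with $n^{-1}W_{jh}\in\coin_{\eta,2\delta}$, then forces $n^{-1}W_{t+s}\in\coin_{\eta,\delta}$. From there, Theorem \ref{transitionp} yields $|\rho(\lambda(W_{t+s}))-r(W_{t+s})|\le C_{1}/Y_{t+s}$, and since the wedge condition gives $Y_{t+s}\ge (\delta/(1+\delta))\eta n$, the second bracket summed over $s$ contributes only $O(\eta^{-1}n^{\alpha-1})$.

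Next, I would bound the first bracket by a smoothness argument. The partials $\partial_x\lambda=1/y$ and $\partial_y\lambda=-x/y^{2}$ are uniformly controlled on $\coin_{\eta,\delta}$, and $|W_{t+s}-W_{t}|\le 2s\le 2h$ in the $\ell^{1}$ norm, so $|\lambda(n^{-1}W_{t+s})-\lambda(n^{-1}W_{t})|=O(n^{\alpha-1})$. Composing with $\rho$, which is smooth on the compact image $\lambda(\coin_{\eta,\delta})\subset(\delta,1/\delta)$, gives $|\rho(\lambda(W_{t}))-\rho(\lambda(W_{t+s}))|=O(n^{\alpha-1})$ and hence $O(n^{2\alpha-1})$ after summing the $h$ terms. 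Adding both contributions yields $|B(j,h)|=O(n^{2\alpha-1})$, whence $n^{-1}|B(j,h)|=O(n^{2\alpha-2})$.

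The main obstacle is pinning down the explicit constant $8/\eta$: the Lipschitz constants of $\lambda$ and of $\rho\circ\lambda$ on $\coin_{\eta,\delta}$ also depend on $\delta$, so one has to verify that this $\delta$-dependence is absorbed into the "for $n$ large enough" clause and that a clean universal factor $8$ emerges once the geometric inequalities on the wedge are carefully optimized. Beyond this bookkeeping, the only genuine analytic input is Theorem \ref{transitionp}; everything else is Taylor expansion of a uniformly Lipschitz function over a sliding sub-interval.
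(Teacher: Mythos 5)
Your proposal follows the paper's proof almost verbatim: the identical split of $F(W_t)-r(W_{t+s})$ into a Lipschitz variation of $F=\rho\circ\lambda$ plus the transition-probability error controlled by Theorem \ref{transitionp}, the identical verification via \eqref{recurse} that $n^{-1}W_{t+s}$ stays in $\coin_{\eta,\delta}$, and the identical $O(n^{\alpha-1})$ per-summand bound followed by summation over $h=\lfloor n^{\alpha}\rfloor$ terms. The single point you flag as uncertain—whether the Lipschitz constant of $F$ on $\coin_{\eta,\delta}$ carries hidden $\delta$-dependence—is settled in the paper by the explicit computation in Section \ref{sec:deltarho} (Proposition \ref{prop-deltarho}): there $|F'_x|,|F'_y|\le 2/x\le 2/\eta$ uniformly, using only $\impl(\lambda+1-\impl)/(\impl-\lambda)\le 2$, so no $\delta$ enters; the per-step drift of $F$ is then at most $4s/(\eta n)$, and the factor $8/\eta$ simply doubles this to swallow the $C(\ell_0,\delta)/(\eta n)$ term from Theorem \ref{transitionp} for $n$ large.
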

\begin{proof}  Recall that $t=j  h$. If
$$n^{\alpha -1}\leq \dfrac {\eta\delta }{6 }, $$
then $$\{n^{-1}W_{t}\in \coin_{\eta, 2\delta}\}\Rightarrow \{\forall s\in[\![1,h]\!],\ n^{-1}W_{t+s}\in \coin_{\eta, \delta}\},$$
but, if $n^{-1}W_{t+s}\in \coin_{\eta, \delta}$, we obtain, below, that
\begin{equation}
\label{lipsch}
\left|r\left(W_{t+s}\right)-F\left(W_{t}\right)\right|\le \dfrac {8}{\eta}\ n^{\alpha-1},
\end{equation}
entailing \eqref{round2}.
Relation  \eqref{lipsch} follows from the Taylor inequality for $\rho$,  provided that  both $n^{-1}W_{t}$ and $n^{-1}W_{t+s}$ belong to $ \coin_{\eta, \delta}$:
\begin{align*}
\left|r\left(W_{t+s}\right)-F\left(W_{t}\right)\right|
&\le
\left|r\left(W_{t+s}\right)-F\left(W_{t+s}\right)\right|
+ \left|F\left(W_{t+s}\right)-F\left(W_{t}\right)\right|
\end{align*}
and, since $W_{t+s} $ meets the conditions in Theorem \ref{transitionp},
\begin{align*}
\left|r\left(W_{t+s}\right)-F\left(W_{t+s}\right)\right|&\le \frac{C(\ell_0,\delta)}{N-t-s}\le \frac{C(\ell_0,\delta)}{\eta n}
\end{align*}
while, according to   Section  \ref{sec:deltarho},
\begin{align*}
\left|F\left(W_{t+s}\right)-F\left(W_{t}\right)\right|&=\left|\rho\left(\tfrac{Z^{(n)}_{t+s}}{N-t-s}-1\right)-\rho\left(\tfrac{Z^{(n)}_t}{N-t}-1\right)\right|\\&\le \dfrac {4s}{\eta n}\le \dfrac {4}{\eta}\ n^{\alpha-1}.
\end{align*} 
For $n$ large enough, 
$$\dfrac {4}{\eta}\ n^{\alpha-1}+  \frac{C(\ell_0,\delta)}{\eta n}\le \dfrac {8}{\eta}\ n^{\alpha-1},$$
yielding  successively \eqref{lipsch}, then \eqref{round2}. 
\end{proof}
Also,  for $t,k\ge 0$, let $\mathcal{F}_k$ denote the $\sigma$-algebra $\sigma(Z_{1},\;Z_2,\ldots,Z_{t+k})$, and
\begin{align*}
\mathcal{M}_k&=\sum\limits_{s=0}^{k-1}\varepsilon_{t+s}.
\end{align*}
The sequence $(\mathcal{M}_k)$ is a martingale with respect to the filtration $\mathcal{F}$ and for any $k$, $|\mathcal{M}_{k+1}-\mathcal{M}_k|\leq1$, thus  Azuma's inequality gives :
\begin{equation*}
\mathbb{P}(|\mathcal{M}_h|\geq u)\leq e^{-\tfrac{u^2}{2h}},
\end{equation*}
in which $\mathcal{M}_{h}=A(j,h)$. For $u=n^{\alpha/2}\ln n$,  we  obtain:
\begin{equation*}
\mathbb{P}\left(n^{-1}A(j,h)\geq u / n\right)\leq e^{-\tfrac{\ln^{2}n}{2}}.
\end{equation*}
Set $$\azuma_n= \left\{\omega \in \Omega ,\exists j\in \left[ 0,\left( 1+\Lambda \right) n^\beta \right]\ \textrm{such~that}\ \left| A\left( j,h,\omega \right) \right| \geq n^{\alpha /2}\ln n\right\}. $$ The previous bounds lead to
\begin{prop}
For $n$ large enough, the  set $\azuma_n$ satisfies:
$$ \mathbb{P} \left( \azuma_n\right) \leq 2\left( 1+\Lambda \right) n^{\beta }e^{-\ln ^{2}n/2}.  $$
\end{prop}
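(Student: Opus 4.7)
The plan is to observe that the proposition is a routine union bound over the index $j$, applied on top of the Azuma estimate that has just been derived in the preceding paragraph. All the martingale work is already done; what remains is bookkeeping on the number of blocks and on the two-sided tail.

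First I would recall that, for each fixed $j\in[\![0,(1+\Lambda)n^\beta]\!]$, the sequence $(\mathcal{M}_k)_{0\le k\le h}$ built from the centered increments $\varepsilon_{t+s}=\incr_{t+s+1}-\mathbb{E}[\incr_{t+s+1}\mid Z^{(n)}_{t+s}]$ is a martingale with bounded differences $|\mathcal{M}_{k+1}-\mathcal{M}_k|\le 1$, because $\incr_{t+s+1}\in\{-1,0\}$. Hence Azuma--Hoeffding applied in both directions gives
\begin{equation*}
\mathbb{P}\bigl(|A(j,h)|\ge n^{\alpha/2}\ln n\bigr)=\mathbb{P}\bigl(|\mathcal{M}_h|\ge n^{\alpha/2}\ln n\bigr)\le 2e^{-\frac{(n^{\alpha/2}\ln n)^2}{2h}}.
\end{equation*}
Since $h=\lfloor n^\alpha\rfloor\le n^\alpha$, the exponent satisfies
$$\frac{(n^{\alpha/2}\ln n)^2}{2h}\ge\frac{\ln^2 n}{2},$$
so each individual block contributes at most $2e^{-\ln^2 n/2}$.

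Next I would apply a union bound over the admissible values of $j$. The definition of $\azuma_n$ ranges $j$ over the integer points of $[0,(1+\Lambda)n^\beta]$, whose cardinality is at most $(1+\Lambda)n^\beta+1\le 2(1+\Lambda)n^\beta$ (using the convention on rounding mentioned in the introduction and taking $n$ large enough). Thus
\begin{equation*}
\mathbb{P}(\azuma_n)\le\sum_{j}\mathbb{P}\bigl(|A(j,h)|\ge n^{\alpha/2}\ln n\bigr)\le 2(1+\Lambda)n^\beta\,e^{-\ln^2 n/2},
\end{equation*}
which is the claimed bound. (If one prefers to keep the factor of $2$ inside the cardinality, one simply replaces the two-sided Azuma bound by the one-sided one and counts twice; the numerical constant is identical.)

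The argument involves no real obstacle: the only delicate point is that the Azuma inequality must be applied \emph{before} invoking Theorem~\ref{transitionp}, i.e.\ the centering is against the conditional mean $-r(W_{t+s})$ rather than against the target $F(W_t)$. This is precisely why the error decomposition has been split into $A(j,h)$ and $B(j,h)$ in the previous paragraphs: the drift correction is absorbed into $B(j,h)$ (bounded deterministically by Lemma~\ref{lemmalipsch}), leaving $A(j,h)$ as a clean bounded-difference martingale to which Azuma applies immediately.
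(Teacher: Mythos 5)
Your proof reconstructs exactly the argument the paper leaves implicit: Azuma--Hoeffding applied to the block martingale $(\mathcal{M}_k)$ for each $j$, followed by a union bound over the $j$'s, with the remark that centering against $-r(W_{t+s})$ rather than $F(W_t)$ (i.e.\ the $A/B$ split) is what makes Azuma applicable. One bookkeeping quibble in your chain of inequalities: you bound the per-block probability by $2e^{-\ln^2 n/2}$ (two-sided Azuma) \emph{and} the number of blocks by $(1+\Lambda)n^\beta+1\le 2(1+\Lambda)n^\beta$; the product of these two is $4(1+\Lambda)n^\beta e^{-\ln^2 n/2}$, not the $2(1+\Lambda)n^\beta e^{-\ln^2 n/2}$ written in your display, so the factor of $2$ has been counted twice. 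The resolution is that partitioning $[0,N]$ with $N=(1+\Lambda)n$ into blocks of length $h=\lfloor n^\alpha\rfloor$ produces only about $(1+\Lambda)n^\beta$ complete blocks (indexed $j=0,\dots,\lfloor N/h\rfloor-1$), so the cardinality factor of $2$ is superfluous and the advertised constant follows from the per-block factor alone. Since $e^{-\ln^2 n/2}$ decays faster than any power of $n$, the slip is immaterial for every downstream use of the proposition, but the chain as written is inconsistent—you should either drop the factor of $2$ from the cardinality bound or accept a final constant of $4$.
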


\subsection{Euler scheme}

Thus, for $\omega\notin\azuma_n$, i.e. but for a probability at most $\mathcal{O}\left(n^{\beta}e^{-\tfrac{\ln^{2}n}2}\right)$, $\left(\zeta   _{n}(t)\right)_{0\le t\le k}$ is obtained through an Euler scheme with step $\tilde{h}=n^{-1}h\simeq n^{\alpha-1}$ and rounding error $\rounding_{j}$ such that
\begin{align*}
\vert \rounding_j\vert&=\vert n^{-1}(A(j)+B(j))\vert
\\
&\le n^{-1+\alpha/2}\ln n+ \dfrac {8}{\eta}\   n^{2\alpha-2}.
\end{align*}
For the choice $\alpha=2 /3$,  $\beta=1 /3$, and for $n$ large enough, depending on the choice of $(\ell_0,\eta,\delta)$, we obtain that
\begin{align*}
\vert \rounding_j\vert&\le 2n^{-2/3}\ln n.
\end{align*}
Then we can see $\zeta   (\ell \tilde{h})$, resp. $\zeta   _{n}(\ell \tilde{h})$,  as the solution of the ODE at time $\ell \tilde{h}$ (resp. the output of the Euler scheme after $\ell$ steps), and set 
$$e_{\ell}=\vert\zeta   _{n}(\ell \tilde{h})-\zeta   (\ell \tilde{h})\vert.$$
Then, following \cite{MR0202267} and according to Section \ref{sec:deltarho}, provided that the points $M_{n,\ell}=\left(\ell \tilde{h},\zeta   _{n}(\ell \tilde{h})\right)$ and  $M_\ell=\left(\ell \tilde{h},\zeta   (\ell \tilde{h})\right)$ belong to $\coin_{\eta, 2\delta}$, and that $1+\Lambda-(\ell+1) \tilde{h}\ge\eta$, we can write 
\begin{align}
\nonumber
e_{\ell+1}&\le \left\vert\zeta   _{n}\left((\ell+1) \tilde{h}\right)-\zeta   _{n}(\ell \tilde{h})+\zeta   (\ell \tilde{h})-\zeta   \left((\ell+1) \tilde{h}\right)\right\vert+e_{\ell}
\\
\nonumber
&=\left\vert \rounding_{\ell}+\tilde{h}F(\ell \tilde{h},\zeta   _{n}(\ell \tilde{h}))+\zeta   (\ell \tilde{h})-\zeta   \left((\ell+1) \tilde{h}\right)\right\vert+e_{\ell}
\\
\nonumber
&=\left\vert \rounding_{\ell}+\tilde{h}F(\ell \tilde{h},\zeta   _{n}(\ell \tilde{h}))-\tilde{h}F(\ell \tilde{h},\zeta   (\ell \tilde{h}))-\dfrac{\tilde{h}^{2}}2\zeta   ^{\prime\prime}((\ell+\theta) \tilde{h}))\right\vert+e_{\ell}
\\
\nonumber
&\le\vert \rounding_{\ell}\vert+\tilde{h}\left\vert F(\ell \tilde{h},\zeta   _{n}(\ell \tilde{h}))-F(\ell \tilde{h},\zeta   (\ell \tilde{h}))\right\vert+\dfrac{\tilde{h}^{2}}2\left\vert\zeta   ^{\prime\prime}((\ell+\theta) \tilde{h}))\right\vert+e_{\ell}
\\
\label{boundsegonde}
&\le\vert \rounding_{\ell}\vert+\tilde{h}e_{\ell}\sup_{u\in[\zeta   _{n}(\ell \tilde{h}),\zeta   (\ell \tilde{h})]}\left\vert F^{\prime}_{y}(\ell \tilde{h},u)\right\vert+\dfrac{\tilde{h}^{2}}2\sup_{v\in[\ell\tilde{h},(\ell+1) \tilde{h})]}\left\vert\zeta   ^{\prime\prime}(v)\right\vert+e_{\ell}
\\
\nonumber
&\le  e_{\ell}\left( 1+\dfrac {2\tilde{h} }{\eta}\right) +\dfrac{\tilde{h}^{2}}{2\eta}+\vert \rounding_{\ell}\vert,
\\
\label{recursefin}
&=  e_{\ell}\left( 1+K\tilde{h}\right) +u_{\ell},
\end{align}
in which 
$$\vert \rounding_{\ell}\vert\le n^{-2/3}\ln n,\quad  \tilde {h}\le 2n^{-1/3},\quad u_\ell\le  n^{-2/3}\left(1/\eta+\ln n\right),$$
and
$$K=\dfrac {2 }{\eta}.$$ The bounds for the supremums  in \eqref{boundsegonde} are obtained in Section \ref{sec:transition}, see Proposition \ref{prop-deltarho}.
For $\ell=0$, $M_{n,0}=M_0=\left(1+\Lambda,1\right)\in\coin_{a, 4\delta}$ for $\delta$ small enough.  Consider the bound \eqref{lambdabound} obtained for $\lambda$ at  Section  \ref{limitpath}. It entails that, for $x\in[a,1+\Lambda]$,
\begin{align}
\label{lambdaboundxi}
\dfrac {a\Lambda}{(1+\Lambda)^{2}}  &\leq\lambda \left( x,\zeta   (x)\right) \leq \Lambda,
\end{align}
so that $\left( x,\zeta   (x)\right) \in\coin_{a, 4\delta}$ for $4\delta\le \min\left(\tfrac{a\Lambda}{(1+\Lambda)^{2}},\Lambda^{-1} \right)$. Thus $M_\ell\in\coin_{a, 4\delta}\subset\coin_{a, 2\delta}$ if $\ell \tilde{h}\in[a,1+\Lambda]$. Now, for $x\in[a,1+\Lambda]$,
\begin{align}
\label{recurse}
\left\{\left( x,\zeta   (x)\right) \in\coin_{a, 4\delta}\ \textrm{and}\ \left\vert\zeta   (x)-\zeta   _n(x)\right\vert\le \dfrac{a\delta}3\right\}\Rightarrow\left\{\left( x,\zeta   _n(x)\right) \in\coin_{a, 2\delta}\right\}.
\end{align}
Assume that, for $k\le\ell$, $M_{n,k-1}\in\coin_{a, 2\delta}$, so that we can write :
\begin{align}
\nonumber
e_{k }&\le e_{k -1}(1+K\tilde{h})+(\tilde{h}^{2}/2a)+\vert \rounding_{k }\vert
\\
\nonumber
&= e_{k -1}(1+K\tilde{h})+u_k 
\\
\nonumber
&\le e_{k -2}(1+K\tilde{h})^2+u_k +(1+K\tilde{h})u_{k -1}
\\
\nonumber
&\le e_{k -3}(1+K\tilde{h})^3+u_k +(1+K\tilde{h})u_{k -1}+(1+K\tilde{h})^2u_{k -2}
\\
\nonumber
&\le e_{0}(1+K\tilde{h})^k +u_k +(1+K\tilde{h})u_{k -1}+\dots+(1+K\tilde{h})^k  u_{0}
\\
\nonumber
&= u_k +(1+K\tilde{h})u_{k -1}+\dots+(1+K\tilde{h})^k  u_{0}.
\end{align}
Then
\begin{align}
\nonumber
e_{\ell}&\le\frac{(1+K\tilde{h})^{\ell+1}-1 }{K\tilde{h}}\ n^{-2/3}\left(1/a+\ln n\right),
\\
\nonumber
&\le 2\ \frac{ 2e^{K(1+\Lambda)}-1 }{K}\ n^{-1/3}\left(1/a+\ln n\right),
\\
\label{boundfinale}
&\le \frac{ 8e^{K(1+\Lambda)}}{K}\ n^{-1/3}\ \ln n\le \dfrac{a\delta}3,
\end{align}
for $n$ large enough, depending on $(\delta,a)$, but not on $\ell$, since : 
$$\left( 1+K\tilde{h}\right) ^{\ell}\leq e^{K\tilde{h}\ell}\leq e^{K(1+\Lambda)},$$
for $\ell\le (1+\Lambda) n^\beta\simeq(1+\Lambda)\tilde{h}^{-1}$. 
Relations \eqref{recurse} and \eqref{boundfinale} entail that $M_{n,\ell}\in\coin_{a, 2\delta}$ so that \eqref{recursefin} holds true and, in turn, $M_{n,\ell+1}\in\coin_{a, 2\delta}$, if necessary. It follows, recursively,  that, for any $\ell\le (1+\Lambda) n^\beta$,
$$e_\ell\le c n^{-1/3}\ln n,\quad c=\frac{ 8e^{K(1+\Lambda)}}{K},$$
that is, at the ends of any infinitesimal interval, the error $\left| \zeta    _{n}-\zeta    \right| $ is bounded accordingly. Between these ends the error can be larger by at most  half the length of this infinitesimal interval, i.e. by $n^{-1 /3}/2$, since  both $  \zeta    _{n}$ and $\zeta   $ are non increasing with slope smaller than 1. Finally, for $n$ large enough and $\omega\notin\azuma_n$, i.e. but for a probability at most $\mathcal{O}\left(n^{-\ln n/2\ +\beta}\right)$, on the interval $[a,1+\Lambda]$, 
$$\Vert\zeta   (\omega)-\zeta   _{n}(\omega)\Vert_{\infty}\le (1+c \ln n)n^{-1/3}.$$

\section{Coupon and automata}
\label{kornew}

\subsection{Kor\v{s}unov's formula}
In $1978$, Kor\v{s}unov \cite{MR517814,MR862029}  proves a formula for the asymptotic enumeration of  accessible complete and deterministic automata (ACDA) with $n$ states over a $k$-letters alphabet. Later Nicaud \cite{Nicaudth}  proves that  ACDA are in bijection with  a subset $\mathcal{A}_{k, n}$ of $\Omega_{kn+1,n}$, though he uses a different terminology  : surjections are represented by \emph{boxed diagrams}, and ACDA by \emph{Dyck} boxed diagram. We recall briefly the definitions of these combinatorial objects in the next subsection. In this paper we assume that $k\ge 2$ and we set $N=kn+1$. With these notations, we can rephrase Kor\v{s}unov's result as follows :
\begin{theo}\cite{MR517814,MR862029}
\label{kor78}
\begin{equation*}
\lim_{n}\mathbb{P}_{N,n} \left(\mathcal{A}_{k, n} \right)=1-k\rho(k)>0.
\end{equation*}
\end{theo}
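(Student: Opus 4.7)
My plan is to translate Nicaud's characterization of ACDAs as Dyck boxed diagrams into a constraint on the reverse-time Markov chain $Z^{(n)}$ of Proposition~\ref{rwstirling}, and then to combine the global profile of Theorem~\ref{mainprofile} with a local random-walk analysis at the endpoint $t=1+\Lambda=k$.

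After translation and the change of variable $m=n-\ell$, the Dyck condition reads $Z^{(n)}_{km}\ge n-m+1$ for every $m=1,\ldots,n$; equivalently, if $V_i=Z^{(n)}_{k(i-1)}-Z^{(n)}_{ki}$ counts the decrements during the $i$-th block of $k$ reverse-time steps, the condition is
\begin{equation*}
V_1+\cdots+V_m \;<\; m \qquad \text{for every } m=1,\ldots,n.
\end{equation*}
The geometry is decisive here: the slope of the limit curve at the endpoint is $\zeta'(k-1,k)=F(k-1)=\rho(k-1)$, which coincides with Korshunov's $\rho(k)$ and, owing to the fixed-point equation $\rho=e^{-k(1-\rho)}$, is strictly smaller than $1/k$. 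Therefore $\zeta(k-1,t)$ sits strictly above the line $t/k$ on $(0,k)$ and meets it tangentially at $t=k$, so the Dyck constraint is only delicate near $t=k$, i.e.\ for moderate values of $m$.

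For the boundary region $m\le M$ with $M$ fixed, Theorem~\ref{transitionp} yields $r(N-s,Z^{(n)}_s)=\rho(k-1)+O(1/n)$ uniformly for $s\le kM$, so the joint law of $(V_1,\ldots,V_M)$ converges, as $n\to\infty$, to that of an i.i.d.\ family $(V^\ast_1,\ldots,V^\ast_M)$ of $\mathrm{Bin}(k,\rho(k))$ variables. Letting $n\to\infty$ and then $M\to\infty$, the theorem reduces to
\begin{equation*}
\lim_{n}\mathbb{P}_{N,n}(\mathcal{A}_{k,n}) \;=\; \mathbb{P}\!\left(\textstyle\sum_{i=1}^{m}V^\ast_i<m \text{ for every } m\ge 1\right),
\end{equation*}
and the classical cycle/ballot lemma evaluates the right-hand side: the walk $X_m=m-\sum_{i=1}^{m}V^\ast_i$ is skip-free upward (step support $\{1,0,-1,\ldots,1-k\}$) with positive drift $1-k\rho(k)$, so by Kemperman's identity the probability it stays in $(0,\infty)$ forever equals precisely the drift $1-k\rho(k)$.

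The main obstacle is the intermediate window $M<m\ll n$, which is controlled neither by Theorem~\ref{mainprofile} at the required fluctuation level, nor by the local Binomial approximation above. To close this window, I would apply the Azuma bound of Section~\ref{sec:rwstirling} to the martingale $\mathcal{M}_\cdot$ on this range: the deterministic gap $m(1/k-\rho(k))$ grows linearly in $m$, while the martingale fluctuation scales only as $\sqrt{m}$, so that for $m\gg 1$ the probability of failure at any given $m$ is exponentially small and a union bound over the window is summable, establishing that the failure probability in the intermediate range is $o(1)$.
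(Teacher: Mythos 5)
Your translation of the Dyck constraint into block increments $V_i$ of the reverse-time chain $Z^{(n)}$ is correct and matches the paper's reformulation, and your evaluation of the limit probability is also correct: for a skip-free-upward walk started at $0$ with positive drift $\mu$, the cycle lemma gives $\mathbb{P}(X_m>0,\,m\le n)=\mathbb{E}[X_n^+]/n\to\mu$, hence $1-k\rho(k)$. This is a genuinely different route from the paper's, which works with the single-step walk $S_j=kn-kZ_{1+j}-j$, recognizes $\{\max S=0\}$ as a Lindley stationarity event, and evaluates it with the Pollaczek--Khinchine formula together with Wald's identity. The two computations yield the same number; yours is slightly more elementary (cycle lemma instead of queueing theory), and the paper's has the small advantage of producing the factorization $(1-\rho(k))E_k$ that appears in the automata literature.

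There is, however, a genuine gap in the coverage of the full constraint. Your three regimes --- $m\le M$ fixed (ballot limit), $M<m\ll n$ (Azuma), and implicitly the macroscopic interior (Theorem~\ref{mainprofile} plus Proposition~\ref{kielbasa}) --- leave out the far corner $m$ close to $n$, i.e.\ original time $t$ close to $0$, which the paper isolates as the interval $I_1=[\![0,an]\!]$. There the limit curve and the line $y=x/k$ both pass through $(0,0)$, so the deterministic gap is only $\Theta(n-m)$ (not $\Theta(m)$, and not $\Theta(n)$), Theorem~\ref{mainprofile} gives no control (it is stated for $t\ge a>0$), and your Azuma argument does not apply either because the transition probabilities are no longer close to $\rho(k)$ and the process is pinned at $Z^{(n)}_{N-1}=1$. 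The paper handles this corner with a dedicated estimate: a direct union bound under the \emph{unconditional} measure $\mathbb{P}_n$ (the probability that the first $\ell k+1$ tries land in at most $\ell$ cells is $\binom{n}{\ell}(\ell/n)^{\ell k+1}$, geometrically summable for $a$ small), combined with the positive-association inequality $\mathbb{P}_{N,n}(\overline{\Upsilon_{n,1}})\ge\mathbb{P}_n(\overline{\Upsilon_{n,1}})$ coming from the independence of the geometric increments $T_{j}-T_{j-1}$ and the Harris/FKG inequality. Without an argument of this type, the proof is incomplete: the estimate $\mathbb{P}_{N,n}(\Upsilon_{n,1})\to 0$ is not a formality and cannot be absorbed into your intermediate-window union bound.
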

In the notations of \cite{MR2786470}, $1-k\rho(k)=(1-\rho(k))E_k$. In Section \ref{basics}, we describe  $\mathcal{A}_{k, n}$ following the lines of \cite{Nicaudth}, then in Sections \ref{NEcorner},\ref{pollaczek},\ref{tails} we give a probabilistic proof of Theorem \ref{kor78} : with the help of Theorem \ref{mainprofile} and of the representation of ACDA, taken from  \cite{Nicaudth},  Theorem \ref{kor78} reduces to  the  Pollaczeck-Khinchine formula for a simple random walk. In Section \ref{automataprofile} we  explain  how Theorem \ref{mainprofile}  extends to  ACDA.

\subsection{Basics on automata}
\label{basics}

In this section, we recall briefly some vocabulary on words and automata, taken from \cite[Section 1.3]{MR2165687}, then we describe the  representation of ACDA by boxed diagrams, following \cite{Nicaudth}. Let $\mathcal{A}$ be a finite totally ordered set, called \emph{alphabet}. The elements of $\mathcal{A}$  are called \emph{letters} or also \emph{symbols}. A finite  \emph{word} $w$ on the alphabet $\mathcal{A}$ is a finite sequence   $w=w_1w_2\ldots w_n$ of elements of $\mathcal{A}$. The set of words is endowed with the operation of  \emph{concatenation}, also called product, in which two words $u=u_1u_2\ldots u_p$ and $v=v_1v_2\ldots v_q$ give the word $uv=u_1u_2\ldots u_pv_1v_2\ldots v_q$. This operation is associative, and it has a neutral element, the  \emph{empty word}, denoted by $\emptyset$. The  \emph{length} of a word $u$,  denoted  $\vert u\vert$, is the number of letters in the word $u$ (so that $\vert \emptyset\vert=0$). We denote by $\mathcal{A}^*$ the set of finite words on the alphabet $\mathcal{A}$.

\begin{defi}
A \emph{deterministic and complete automaton} $\mathfrak{A}$ is a quintuplet $(\mathcal{A},Q,\delta,I,F)$ consisting of:
\begin{itemize}
\item an alphabet $\mathcal{A}$, such that $\#\mathcal{A}=k$,
\item a set  $Q$ of states, such that $\#Q=n$,
\item an  \emph{initial  state} $q_0$,
\item a \emph{transition function}, $\delta$, that takes as argument a state and a symbol and returns a state,  $\delta:Q\times \mathcal{A} \rightarrow Q$,
\item a set of \emph{final  states} $F\subset Q$.
\end{itemize}
\end{defi}
The transition function $\delta$ has a straightforward extension to $Q\times \mathcal{A}^*$, that describes a path from a state $q$ to another state  $\delta(q,w)$ through a sequence $w$ of letters (=edges) in a directed graph related to $\delta$, see the figure below. For instance, for $w=w_1w_2\in \mathcal{A}^2 $,
$$\delta \left( q,w_{1}w_{2}\right) =\delta \left( \delta \left( q,w_{1}\right) ,w_{2}\right). $$
\begin{defi}
A deterministic finite automaton $\mathfrak{A}$ is \emph{accessible} when for each state $q$ of $\mathfrak{A}$, there exists a word $u\in \mathcal{A} ^*$ such that $\delta(q_0,u)=q$. 
\end{defi}
\begin{defi}
A word $u$ is \emph{recognized} by an automaton when $\delta(q_0,u)\in F$. The \emph{language} recognized by an automaton is the set of words that it recognizes. 
\end{defi}

\textbf{Two representations of an ACDA}.
Consider the  ACDA  $\mathfrak{A}=(\mathcal{A},Q,\delta,I,F)$ given by the alphabet $\mathcal{A} =\{a,b,c\}$, $Q=\{q_0,q_1,q_2,q_3\}$, $I=q_0$ and $F=\{q_3\}$. The  transition table of $\mathfrak{A}$ is a first representation of $\delta$, for instance :
 \begin{equation*}
   \begin{tabular}{ l ||c | c | r|}
     \hline
    $\delta$  & $a$ & $b$ &$c$ \\ \hline
  $\rightarrow$ $q_0$ & $q_1$& $q_2$ &$q_3$\\ \hline
  \quad $q_1$ & $q_1$ &$ q_1$ &$q_3$\\ \hline
   \quad $q_2$ & $q_2$ & $q_2$ &$q_1$\\ \hline
   * $q_3 $&  $q_3 $&  $q_3$& $q_3$\\
     \hline
   \end{tabular}
 \end{equation*}
The symbol $\rightarrow$ marks the initial state, here it is $ q_0$. The symbols $*$ mark the final state(s) (here there is only one final state, $ q_3$). 

Another representation is through a directed graph with edges labelled by $\mathcal{A} $ and whose  set of vertices is $Q$ : for $(a,q,r)\in \mathcal{A} \times Q^2$ a directed edge $(q,r)$ with label $a$ is present in the graph of $\mathfrak{A} $ if and only if $\delta(q,a)=r$. Then each vertex of the graph has out-degree $k$, and there is a path from $q$ to $r$ in the graph if and only if there exists a word  $u\in \mathcal{A} ^*$ such that $\delta(q,u)=r$, hence the term \emph{accessible}. Only the initial state has an ingoing edge with no starting point, and the final states have an outgoing edge with no endpoint.
 \begin{center}
 \begin{tikzpicture}[>=stealth',shorten >=1pt,auto,node distance=2 cm, scale = 1, transform shape]
\node[initial,state] (A)                                    {$q_0$};
\node[state]         (B) [right of=A]                       {$q_1$};
\node[state]         (C) [above of=B]                       {$q_2$};
\node[state,accepting]         (D) [below of=B]                       {$q_3$};

\path[->] (A) edge [left]       node [align=center]  {$ b  $} (C)
      (A) edge [above]      node [align=center]  {$ a$} (B)
      (A) edge [left]       node [align=center]  {$ c$} (D)
      (B) edge [right]      node [align=center]  {$ a$} (D)
      (B) edge [ loop right ]   node [align=center]  {$ b,c$} (B) 
      (C) edge [loop above] node [align=center]  {$ a,b  $} (C)
       (C) edge [right]      node [align=center]  {$ c $} (B)     
        (D) edge [loop below] node [align=center]  {$a,b, c $} (D);

\end{tikzpicture}
\end{center}
The accessibility of an automaton $\mathfrak{A}=(\mathcal{A},Q,\delta,q_0,F)$  depends only on its \emph{transition structure} $\mathfrak{D}=(\mathcal{A},Q,\delta,q_0)$, not on its final states, thus one  can discuss the  accessibility of a complete deterministic transition structure (CDTS) :  $(\delta,q_0)$ can be seen as a map $\delta^*$ from the set of edges $\{\rightarrow\}\cup\left(Q\times\mathcal{A} \right)$, including thus $Q\times\mathcal{A}$ plus the starting edge $\rightarrow$, such that  $\delta^*\left(\rightarrow\right)=q_0$ and $\delta^*\vert_{Q \times\mathcal{A} }=\delta$. The CDTS is accessible only if its transition function $\delta^*$ is a surjection,  that is,  $\delta^*$ has to belong to $\Omega_{N,n}$ and this has to be the connection between the impatient collector and ACDA. However, two problems arise: 
\begin{itemize}
  \item though $\{\rightarrow\}\cup\left(Q\times\mathcal{A} \right)$ has $kn+1=N$ elements, a total order would be handy to identify  $\{\rightarrow\}\cup\left(Q\times\mathcal{A} \right)$ with $[\![1, N ]\!]$, and  $\delta^*$ with an element of $\Omega_{N,n}$ ;
  \item the surjectivity of $\delta^*$ is not sufficient to insure the connexity of $\mathfrak{D}$.
\end{itemize}
It turns out that the answer to the second point is also an answer to the first point : as usual for the connexity of graphs, a necessary and sufficient condition of connexity is a positivity condition for a path related to the breadth-first-search of the corresponding graph, and this breadth-first search also provides a total order on $Q$, which, with the  alphabetic order on $\mathcal{A}$, induces a lexicographic order on  $\{\rightarrow\}\cup\left(Q\times\mathcal{A} \right)$, allowing to identify   $\{\rightarrow\}\cup\left(Q\times\mathcal{A} \right)$ with  $[\![1, N ]\!]$. The path, then, is the completion curve for $\delta^*$ once   $\{\rightarrow\}\cup\left(Q\times\mathcal{A} \right)$ is identified to $[\![1, N ]\!]$. 

More precisely, the search starts from the initial vertex,    $\delta^*(\rightarrow)$,  of the first directed edge $\rightarrow$, end vertex relabelled $1$ for it is the first piece in the collection. Then one explores the edges starting from $q_0=1$, in the alphabetic order, from $\delta(q_0,a_1)$ to $\delta(q_0,a_n)$, and when this exploration is over, either there exists no new piece in the collection, meaning that $q_0$ is a connected component by itself, and meaning that $\mathfrak{D}$ is not accessible, or there exists some new piece. Thus the completion curve of an  accessible CDTS   must satisfy $y_{k+1}\ge 2$. The $y_{k+1}-1$ new vertices, at this stage, are of the form
$$\delta^*(\delta^*(\rightarrow),a)=\delta^*(\rightarrow a),$$
 and they are sorted (and explored) according to the alphabetic order for the letters $a\in\mathcal{A}$. They are also relabeled $2,3,\dots,y_{k+1}$. Similarly, after the exploration of the neighbours of the second piece $q_1=2$ of the collection, we need $y_{2k+1}\ge 3$, else $\{q_0,q_1\}$ would be a connected component. In general, the CDTS is accessible if and only if the completion curve $y$ of $\delta^*$ satisfies 
\begin{equation}
\label{connexion}
	\forall \ell\in[\![0,n-1]\!],\quad y_{\ell k+1}\ge \ell+1.
	\end{equation}
Now, according to \cite{Nicaudth}, the boxed diagram of $\mathcal{D}$ is just the completion curve of $\delta^\ast$, decorated with one mark in each column, at height $x_i\le y_i$, meaning that $\delta^*(i)=x_i$.

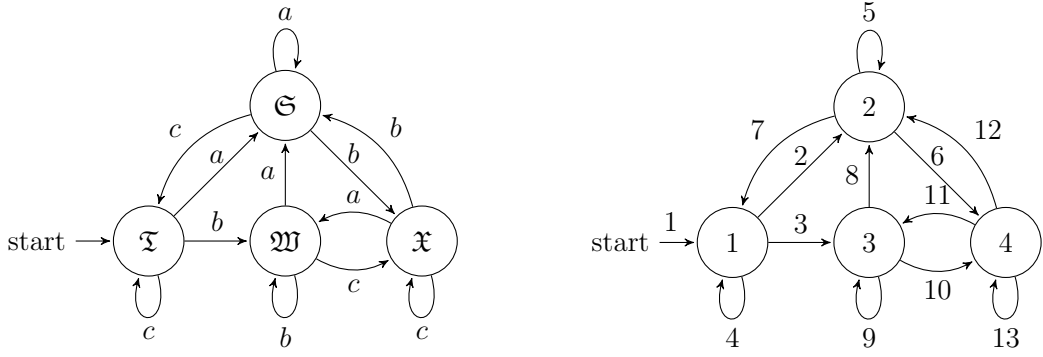
\begin{figure}
\begin{center}
\begin{tikzpicture}[>=stealth',shorten >=1pt,auto,node distance=2 cm, scale = 0.9, transform shape]
\node[initial,state] (A)                                    {$\mathfrak{T}$};
\node[state]         (B) [right of=A]                       {$\mathfrak{W}$};
\node[state]         (C) [above of=B]                       {$\mathfrak{S}$};
\node[state,]         (D) [right of=B]                       {$\mathfrak{X}$};
\path[->] (A) edge [loop below]       node [align=center]  {$ c  $} (A)
(A) edge [above]       node [align=center]  {$ b$} (B)
(A) edge [above]       node [align=center]  {$ a$} (C)
(B) edge [loop below]       node [align=center]  {$ b $} (B)
 (C) edge [loop above] node [align=center]  {$ a  $} (C)
 (D) edge [loop below]      node [align=center]  {$ c$} (D) 
 (C) edge [above]       node [align=center]  {$ b$} (D)
  (B) edge [left]       node [align=center]  {$ a$} (C) 
   (C) edge [bend right]     node [swap] {$ c$} (A) 
   (D) edge [bend right]       node [swap]  {$ b$} (C) 
   (D) edge [bend right]       node [swap]  {$a$} (B) 
   (B) edge [bend right]       node [swap]  {$c$} (D)  ;   
\end{tikzpicture}\hspace{1.5cm}
\begin{tikzpicture}[>=stealth',shorten >=1pt,auto,node distance=2 cm, scale = 0.9, transform shape]
\node[state,initial] (A)                                    {$1$};
\node[state]         (B) [right of=A]                       {$3$};
\node[state]         (C) [above of=B]                       {$2$};
\node[state,]         (D) [right of=B]                       {$4$};

\path[->] 
(A) edge [loop below]       node [align=center]  {$4$} (A)
(A) edge [above]       node [align=center]  {$3$} (B)
(A) edge [above]       node [align=center]  {$2$} (C)
(B) edge [loop below]       node [align=center]  {$9$} (B)
 (C) edge [loop above] node [align=center]  {$5$} (C)
 (D) edge [loop below]      node [align=center]  {$ 13$} (D) 
 (C) edge [above]       node [align=center]  {$6$} (D)
  (B) edge [left]       node [align=center]  {$8$} (C) 
   (C) edge [bend right]     node [swap] {$7$} (A) 
   (D) edge [bend right]       node [swap]  {$12$} (C) 
   (D) edge [bend right]       node [swap]  {$11$} (B) 
   (B) edge [bend right]       node [swap]  {$10$} (D)  ;   
 \node at (-0.9,0.3) {$ {1}$};
\end{tikzpicture}
\end{center}
\caption{On the left: a CDTS $\mathcal{D}$. On the right: the ordering of vertices and edges inherited from the breadth-first search.}
\label{canon}
\end{figure}

\begin{figure}
\centering
\begin{tikzpicture}[scale=0.6]
\foreach \i in {5,...,12}
{\foreach \j in {1,...,4}
{\carre{\i}{\j}{white};}}
\foreach \i in {2,...,5}
{\foreach \j in {1,...,3}
{\carre{\i}{\j}{white};}}
\foreach \i in {1,...,2}
{\foreach \j in {1,...,2}
{\carre{\i}{\j}{white};}}
\foreach \i in {0,...,1}
{\foreach \j in {1,...,1}
{\carre{\i}{\j}{white};}}
\node at (0.5,1.5) {\textcolor{red}{$\times$}};
\node at (1.5,2.5) {\textcolor{red}{$\times$}};
\node at (2.5,3.5) {\textcolor{red}{$\times$}};
\node at (3.5,1.5) {$\times$};
\node at (4.5,2.5) {$\times$};
\node at (5.5,4.5) {\textcolor{red}{$\times$}};
\node at (6.5,1.5) {$\times$};
\node at (7.5,2.5) {$\times$};
\node at (8.5,3.5) {$\times$};
\node at (9.5,4.5) {$\times$};
\node at (10.5,3.5) {$\times$};
\node at (11.5,2.5) {$\times$};
\node at (12.5,4.5) {$\times$};
\draw[very thick, blue] (0,1)--(0,2)--(1,2)--(1,3)--(2,3)--(2,4)--(5,4)--(5,5)--(13,5);
\draw[thick,red] (0.05,0.95)--(0.05,1.95)--(3,1.95)--(3,3)--(6,3)--(6,4)--(9,4)--(9,4.95)--(13,4.95);
\fill[red, opacity=0.1] (0,1)--(0,2)--(3,2)--(3,1);
\fill[red, opacity=0.1] (3,1)--(3,3)--(6,3)--(6,1);
\fill[red, opacity=0.1] (6,1)--(6,4)--(9,4)--(9,1);
\fill[red, opacity=0.1] (9,1)--(9,4.95)--(13,4.95)--(13,1);
\node at (12.5,0.5){$13$};
\node at (11.5,0.5){$12$};
\node at (10.5,0.5){$11$};
\node at (9.5,0.5){$10$};
\node at (8.5,0.5){$9$};
\node at (7.5,0.5){$8$};
\node at (6.5,0.5){$7$};
\node at (5.5,0.5){$6$};
\node at (4.5,0.5){$5$};
\node at (3.5,0.5){$4$};
\node at (2.5,0.5){$3$};
\node at (1.5,0.5){$2$};
\node at (0.5,0.5){$1$};
\end{tikzpicture}
\caption{The boxed diagram of the CDTS $\mathcal{D}$ (in pink, the minimal completion curve for a 3-Dyck boxed diagram).}
\end{figure}
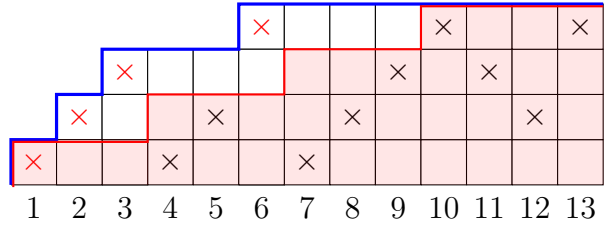

\textbf{Example:} In Figure \ref{canon}, we see how the breadth-first search of the graph produces a labeling of the vertices and edges, which, in turn, dictates the order of the search: the ends of the edges starting from a given vertex are searched in the alphabetic order, and the vertices are searched according to their order of apparition during the breadth-first search, beginning with the starting vertex.  The 24 CDTS, obtained by permutation of the symbols $\{\mathfrak{T},\mathfrak{W},\mathfrak{S},\mathfrak{X}\}$ as labels of the vertices of our example on the left, would produce the same labeling as is pictured on the left. Note that the correspondance \emph{edge-endpoint} is a surjection $\tilde{\omega}$ from $[\![1,13]\!]$ to $[\![1,4]\!]$, with a special property: the partition $P_{\mathfrak{D}}=\left(\tilde{\omega}^{-1}(1),\tilde{\omega}^{-1}(2),\tilde{\omega}^{-1}(3),\tilde{\omega}^{-1}(4)\right)$, here e.g.
$$P_{\mathfrak{D}}=\{\{1,4,7\}, \{2,5,8,12\},\{3,9,11\},\{6,10,13\}\},$$
 is necessarily sorted in increasing order of the smallest elements of the parts.

Similarly, a sequence $\omega \in \Omega_{N,n}$ (a surjection) can be matched with a boxed diagram in $\mathcal{S}_{N,n}$ in exactly $n!$ ways, as follows : according to the coupon collector metaphor, the collection process produces an order among the elements of the collection: 
$$\sigma_\omega \left( k \right) =\omega_{T_{k}\left( \omega \right)},$$
denotes the $k$th element of $[\![1,n]\!]$ to enrich the collection ; $\sigma_{\omega}$ is a random uniform permutation of $[\![1,n]\!]$. Setting
$$\tilde{\omega}_i=\sigma^{-1}_{\omega} \left(\omega_i \right),\text{~i.e.~}\tilde{\omega}=\sigma^{-1}_{\omega} \circ \omega,$$
we obtain that 
$$\tilde{\omega}_{T_k} =y_{T_k}(\omega)=k\quad\text{and}\quad \tilde{\omega}_i\le y_i(\omega)=\max\{\tilde{\omega}_j,\ j\le i\}, \forall i.$$
Thus $(y_i,\tilde{\omega}_i)_{1\le i\le N}$ is a boxed diagram associated with the surjection $\omega$, or with any surjection of the form $\tau\circ \omega$, with  $\tau\in\mathfrak{S}_n$. That is,  if $\tau\in\mathfrak{S}_n$, $\tau\circ \omega$ produces the same boxed diagram as $\omega$, and there exists exactly $n!$ elements of $\Omega_{N,n}$ with the same boxed diagram.
Finally, a random uniform surjection  $\omega \in \Omega_{N,n}$ produces a random uniform boxed diagram, while a random uniform CDTS produces a random uniform boxed diagram satisfying additionally the constraint \eqref{connexion} (such a boxed diagram is also called a $k$-Dyck boxed diagram). Thus there is a correspondance in which each boxed diagram is related to $n!$ different surjections of $\Omega_{N,n}$, and a similar (though different) correspondance in which a $k-$Dyck boxed diagram is related to $n!$ different CDTS. Note that, according to  \eqref{connexion},
\begin{align}\label{accessdeux}y_{k(n-1)+1}= \dots=y_{nk+1}=n,\end{align} 
meaning that $y_{nk+1}$ does not satisfy  inequality   \eqref{connexion}.

\subsection{Reduction to the  NorthEast corner} 
\label{NEcorner}
Now$\mathcal{A}_{k, n} $ denotes the subset of elements $\omega\in\Omega_{N,n}$ meeting the condition \eqref{connexion}. Then Theorem \ref{kor78} is equivalent to an assertion on the asymptotic  behaviour of the Markov chain $ Z^{(n)}$ studied at Section \ref{sec:rwstirling} : the probability that the sample path of $Z^{(n)}$ crosses the line $y=x /k$ outside its endpoints $(0,0)$ and $(kn+1,n)$ converges to $k\rho(k)\in(0,1)$.

\begin{figure}[ht]
\centering
\includegraphics[width=11cm]{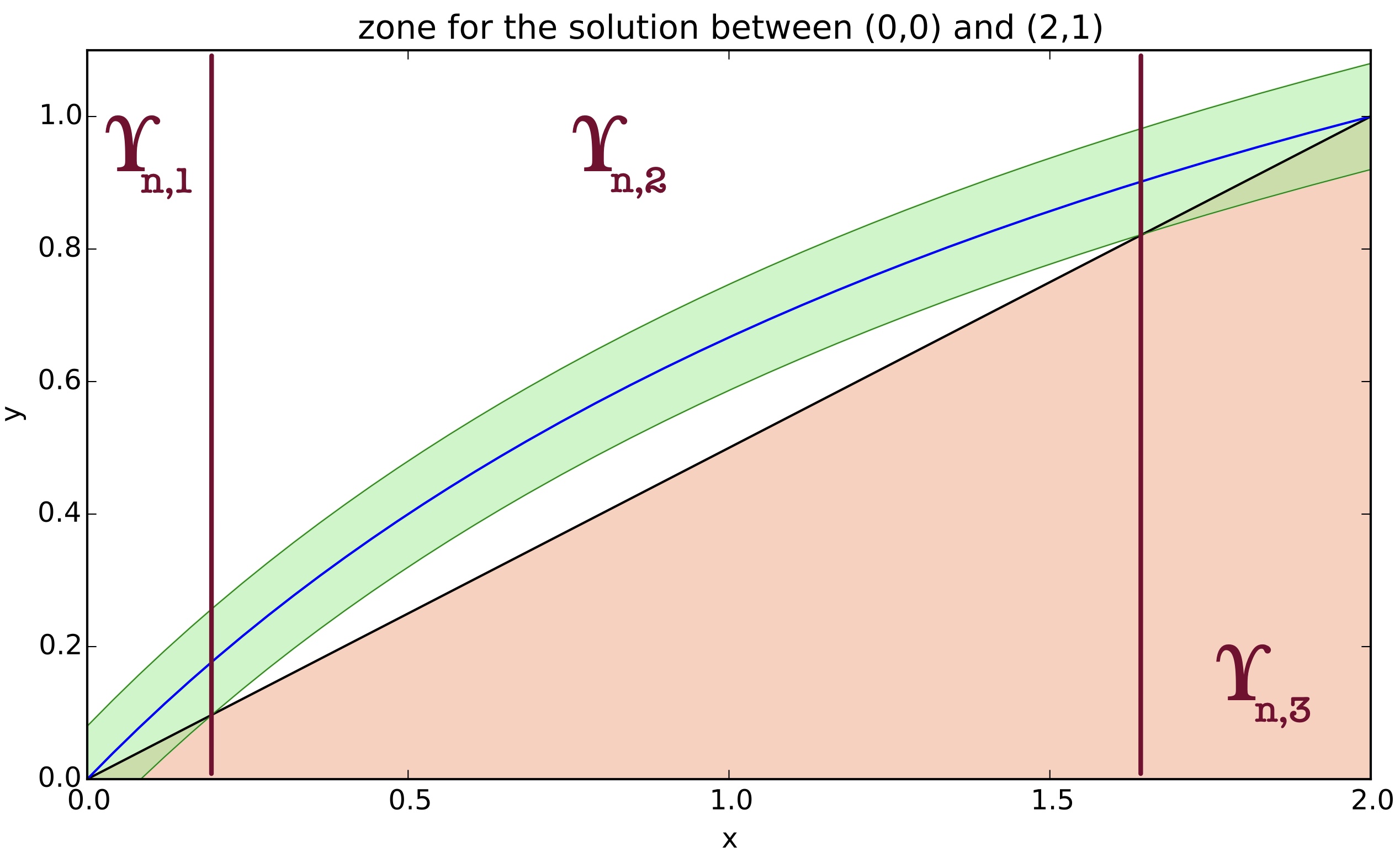}
\caption{Forbidden zones.}
\label{Saucisse}
\end{figure}

This probabilistic formulation of  Kor\v{s}unov's formula hints at the relation with Theorem \ref{mainprofile} :  as  Figure \ref{Saucisse} shows, Theorem \ref{mainprofile},  with Proposition \ref{kielbasa}, prevents these crossings outside the close vicinity of the endpoints, but for a small probability. For $n$ large enough, a crossing inside the interval $ I_2= [\![an,kn- 2 C k^2 n^{1/3} ]\!]$ violates the convergence to the limit path at the rate given by Theorem \ref{mainprofile}, thus such crossings happen with a probability smaller than $n^{1/3}e^{-\ln ^{2}n /2}$.   As a consequence, the line of proof for Theorem \ref{kor78} goes according to the following steps: set
$$I_1= [\![0, an ]\!],\quad I_3= [\![N-2Ck^{2} n^{1/3},N]\!],$$
and let the event that a crossing happens inside the interval $I_j$ be denoted $\Upsilon_{n,j}$. Then
$$\vert\mathbb{P}_{N,n} \left(\overline{\mathcal{A}_{k, n}} \right)-\mathbb{P}_{N,n} \left(\Upsilon_{n,3} \right)\vert\le \mathbb{P}_{N,n} \left(\Upsilon_{n,1} \right)+\mathbb{P}_{N,n} \left(\Upsilon_{n,2} \right),
$$
in which, due to Theorem \ref{mainprofile},
\begin{equation}
\label{crossing2}
  \mathbb{P}_{N,n} \left(\Upsilon_{n,2} \right)\le  n^{1/3}e^{-\ln ^{2}n /2}.
\end{equation}
We also have:
\begin{prop} If $a$ is small enough,
\begin{equation}
\label{crossing1}
\lim_n\mathbb{P}_{N,n} \left(\Upsilon_{n,1} \right)=0.
\end{equation}
\end{prop}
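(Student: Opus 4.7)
The plan is to replace the conditional measure $\mathbb{P}_{N,n}$ by the patient measure $\mathbb{P}_n$ through a stochastic domination argument, after which a crude union bound suffices because inside $I_1$ we sit deep in the pre-collision regime $i\ll n$, where no Stirling asymptotics are needed.

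First, I observe that under $\mathbb{P}_{N,n}$ the process $(y_i)_{i\ge 0}$ remains a Markov chain: it is the Doob $h$-transform of the patient chain by the function
\[
h_i(m)\,:=\,\mathbb{P}_n\bigl(T_n\le N\,\vline\, y_i=m\bigr),
\]
so its upward transition probability from state $m$ equals $\bigl((n-m)/n\bigr)\cdot h_{i+1}(m+1)/h_i(m)$. Two monotonicities force $h_{i+1}(m+1)/h_i(m)\ge 1$: the function $h_i(\cdot)$ is non-decreasing in $m$ (more pieces already collected makes completion easier), and $h_i(m)$ is a convex combination of $h_{i+1}(m)$ and $h_{i+1}(m+1)$. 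Thus the upward transition under $\mathbb{P}_{N,n}$ dominates that under $\mathbb{P}_n$ from every state $m$, and a standard monotone coupling produces $y_i^{(N,n)}\ge y_i^{(n)}$ almost surely for every $i$. Since
\[
\Upsilon_{n,1}\,=\,\bigcup_{\ell\,:\,\ell k+1\le an}\bigl\{y_{\ell k+1}\le \ell\bigr\}
\]
is downward-monotone in the path, $\mathbb{P}_{N,n}(\Upsilon_{n,1})\le \mathbb{P}_n(\Upsilon_{n,1})$.

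The patient probability is then handled by a direct union bound. The subset-counting estimate $\mathbb{P}_n(y_i\le \ell)\le \binom{n}{\ell}(\ell/n)^i$, together with $\ell!\ge(\ell/e)^\ell$, gives at $i=\ell k+1$:
\[
\mathbb{P}_n\bigl(y_{\ell k+1}\le \ell\bigr)\,\le\,\frac{\ell}{n}\,\Bigl[e\,(\ell/n)^{k-1}\Bigr]^{\ell}\,\le\,\frac{\ell}{n}\,\delta^{\ell},\qquad \delta\,:=\,e\,(a/k)^{k-1}.
\]
Choosing $a$ small enough (concretely, $a<k\,e^{-1/(k-1)}$) gives $\delta<1$, and summing the geometric series,
\[
\mathbb{P}_n(\Upsilon_{n,1})\,\le\,\frac{1}{n}\sum_{\ell\ge 1}\ell\,\delta^{\ell}\,=\,\frac{\delta}{n(1-\delta)^2}\,=\,O(n^{-1})\,\longrightarrow\,0.
\]

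The only non-routine step is the stochastic domination, which depends on identifying the correct direction of the $h$-transform bias. The Stirling-number asymptotics of Section \ref{sec:stirling} and the uniform profile estimate of Theorem \ref{mainprofile} are of no help in this tiny-$t$ regime because both degenerate near the origin; but they are also unnecessary, since an accessibility failure in $I_1$ simply requires an anomalous clustering of very early collisions, already rare under $\mathbb{P}_n$ and rarer still under the fast-completion conditioning that pushes $y_i$ upward.
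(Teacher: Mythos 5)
Your proof is correct, and it follows the same overall strategy as the paper — reduce the conditional probability to the patient probability via a domination argument, then kill the patient probability with a union bound — but the domination step is carried out by a genuinely different mechanism. The paper observes that under $\mathbb{P}_n$ the increments $T_k-T_{k-1}$ are \emph{independent} geometrics, notes that both the event $\overline{\Upsilon_{n,1}}=\{T_{\ell+1}\le k\ell+1,\ \forall\ \ell k+1\in I_1\}$ and the conditioning event $\{T_n\le N\}$ are coordinate-wise non-increasing functions of these independent increments, and invokes Harris' correlation inequality to get $\mathbb{P}_{N,n}(\overline{\Upsilon_{n,1}})\ge \mathbb{P}_n(\overline{\Upsilon_{n,1}})$. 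You instead view $\mathbb{P}_{N,n}$ as the Doob $h$-transform of the patient chain $(y_i)$ by $h_i(m)=\mathbb{P}_n(T_n\le N\mid y_i=m)$, deduce $h_{i+1}(m+1)\ge h_i(m)$ from the monotonicity of $h_i(\cdot)$ in $m$ and the one-step convex-combination identity, conclude that the conditioned chain's upward transition probability dominates the unconditioned one's from every state, and finish with a monotone coupling. Both routes are sound: your $h$-transform argument is a little heavier (it requires asserting, without proof though it is standard, that $h_i(\cdot)$ is non-decreasing in $m$, and spelling out the monotone coupling for a $\{0,+1\}$-step chain), whereas the paper's route gets the inequality directly from the independence of the geometric waiting times and is arguably the lighter tool here. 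Your union bound is essentially the paper's: you write $\mathbb{P}_n(y_{\ell k+1}\le\ell)\le(\ell/n)\bigl[e(\ell/n)^{k-1}\bigr]^{\ell}\le(\ell/n)\delta^{\ell}$ and sum the geometric series, while the paper bounds the ratio $u_{\ell+1}/u_\ell\le 1/2$; both yield $\mathbb{P}_n(\Upsilon_{n,1})=O(n^{-1})$ for $a$ small enough, with the same kind of threshold on $a$.
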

As a consequence, the asymptotic behaviour of the profile in the NorthEast corner should give simultaneously the limit of $\mathbb{P}_{N,n} \left(\Upsilon_{n,3} \right)$, and Kor\v{s}unov's formula. This is the topic of the next sections.
\begin{proof} An alternative formulation of  the condition  $\overline{\Upsilon_{n,1}}$ is as follows: $y_{\ell k+1}\geq \ell+1$ \emph{holds true for }$0\le\ell\le an$. The proof of  \eqref{crossing1}   has two steps :
\begin{equation}
\label{ramanu}
\lim_n \mathbb{P}_{n}\left(\overline{\Upsilon_{n,1}}\right)=1,
\end{equation}
and
\begin{equation}
\label{robbins}
\mathbb{P}_{N,n}\left(\overline{\Upsilon_{n,1}}\right) \ge \mathbb{P}_{n}\left(\overline{\Upsilon_{n,1}}\right).
\end{equation}
Now:
\begin{align*}
\mathbb{P}_{n}\left({\Upsilon_{n,1}}\right)
&\le 
\sum_{0\le\ell\le  an}\mathbb{P}_{n}\left( y_{\ell k+1}\leq \ell\right)
\\
&=
\sum_{1\le\ell\le  an} {n \choose \ell} \left( \dfrac {\ell}{n}\right) ^{\ell k+1} \ =\ 
\sum_{1\le\ell\le  an} \ u_{\ell}.
\end{align*}
But 
\begin{align*}
\dfrac {u_{\ell+1}}{u_{\ell}}
&= \dfrac{n - \ell}{\ell+1} \left( \dfrac {\ell+1}{\ell}\right) ^{\ell k+1} \left( \dfrac {\ell+1}{n}\right) ^{k}
\\
&\le 2\,e^k \left( \dfrac {\ell+1}{n}\right) ^{k-1} \le 2\,e^k \left(2a\right) ^{k-1}\le 4a\,e^k\le\dfrac{1}2,
\end{align*}
for $a$ small enough, in which case we have :
\begin{align*}
\mathbb{P}_{n}\left({\Upsilon_{n,1}}\right)
&\le 2 u_1\le\ \dfrac2{n^2},
\end{align*}
as expected.

For \eqref{robbins}, note that :
$$\mathbb{P}_{n}\left(\overline{\Upsilon_{n,1}}\right) = \mathbb{P}_n\left( T_{\ell+1}\leq k\ell+1,\,1+k\ell\in [\![ 0, an]\!] \right).$$ 
But, under  $\mathbb{P}_n$,  $\left(T_{k}-T_{k-1}\right)_{1\leq k\leq n}$ is a  sequence of independent random variables (with geometric distributions and respective expectations $n/(n+1-k ) $), so, according to \cite{MR0063592}:
\begin{align*}
	 \mathbb{P}_n \big( T_{\ell+1}\leq k\ell+1,\,&1+k\ell\in [\![ 0, an]\!] \ \text{and}\ T_n\le N\big)
	 \\
	 &\ge \mathbb{P}_n \left( T_{\ell+1}\leq k\ell+1,\,1+k\ell\in [\![ 0, an]\!]\right)\ \mathbb{P}_n \left(T_n\le N\right),
\end{align*} 
or, equivalently,
\begin{align*}
	 \mathbb{P}_n \big( T_{\ell+1}\leq k\ell+1,\,&1+k\ell\in [\![ 0, an]\!] \,\vert\ T_n\le N\big)
	 \\
	 &\ge \mathbb{P}_n \left( T_{\ell+1}\leq k\ell+1,\,1+k\ell\in [\![ 0, an]\!]\right),
\end{align*} 
which is relation \eqref{robbins}. 
\end{proof}

\subsection{Random walks and Pollaczek-Khintchine's formula}
\label{pollaczek} 
In this section, and the next one, we shall prove that
\begin{equation}
\label{crossing3}
\lim_n\mathbb{P}_{N,n} \left(\Upsilon_{n,3} \right)=k\rho(k).
\end{equation}
Relation \eqref{crossing3} results from  the Pollaczek-Khinchine formula, as we shall see now: $\Upsilon_{n,3}$ relates to a crossing of the line $y=x/k$ by $Z^{(n)}$ before time  $2Ck^2 n^{1/3}$. But, before time $2Ck^2 n^{1/3}$, i.e. for $(m,\ell)$ close to $(kn,n)$, due to Proposition \ref{rwstirling} and Theorem \ref{transitionp}, the transition probabilities $r(m,\ell)$ of $Z^{(n)}$ are  close to the constant $\rho(k)$, so that we expect $Z^{(n)}$ to behave, early, like a random walk $Z$  starting at $n$, with step distribution
\[(1-\rho(k))\delta_{0}+\rho(k)\delta_{-1}.\] 
Since $\rho(k)<1/k$,  the trend is that $Z$ does not  cross the line, and if it does at all, the crossing has to take place early, hence we expect the crossing probability of Z to be the limit of $\mathbb{P}_{N,n} \left(\Upsilon_{n,3} \right)$. In the next section, we shall discuss the convergence (to $Z$) of $Z^{(n)} $, and its speed. In this section, we compute the crossing probability  $\mathbb{P}\left(\Upsilon\right)$ of $Z$, in which:
$$\overline{\Upsilon} =\left\{Z_{0}=n\ \text{and}\ Z_{\ell}\ge n-\dfrac{\ell-1}{k}\ \text{for}\ \ell\ge 1 \right\}.$$ 
\begin{prop} 
\label{heurist2} 
$$\mathbb{P}\left(\Upsilon\right)=k\rho(k).$$
\end{prop}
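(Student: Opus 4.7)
The plan is to reformulate $\overline{\Upsilon}$ via the random walk $M_\ell := kD_\ell - \ell$, where $D_\ell := n - Z_\ell$ counts the decrement-steps of $Z$ by time $\ell$. Then $M$ has i.i.d.\ increments in $\{k-1, -1\}$ with respective probabilities $\rho(k)$ and $1-\rho(k)$, starts at $M_0 = 0$, and has negative mean step $k\rho(k) - 1 < 0$. The constraint $Z_\ell \geq n - (\ell-1)/k$ becomes $M_\ell \leq -1$, so, by continuity of probability,
\[
\mathbb{P}(\overline{\Upsilon}) = \mathbb{P}\bigl(M_\ell < 0 \text{ for all } \ell \geq 1\bigr) = \lim_{n \to \infty} \mathbb{P}\bigl(M_\ell < 0 \text{ for all } 1 \leq \ell \leq n\bigr).
\]

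For each fixed $n$, I would apply the Cycle Lemma (Dvoretzky--Motzkin) to the increment sequence of $M$, all of whose values are $\geq -1$. Conditional on $\{D_n = b\}$, the terminal value is $M_n = kb - n$; when $kb < n$, the cycle lemma guarantees that exactly $n - kb$ of the $n$ cyclic rotations of the increment word produce a walk with all partial sums strictly negative on $[1,n]$. Because cyclic rotation is a measure-preserving bijection on the set of length-$n$ sequences with $D_n = b$ fixed, this yields
\[
\mathbb{P}\bigl(M_\ell < 0,\; 1 \leq \ell \leq n \,\big\vert\, D_n\bigr) = \bigl(1 - kD_n/n\bigr)^{+}.
\]
Taking expectations and using $D_n/n \to \rho(k)$ almost surely (strong law of large numbers) together with bounded convergence, the limit equals $(1 - k\rho(k))^+ = 1 - k\rho(k)$, using that $k\rho(k) < 1$. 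Hence $\mathbb{P}(\Upsilon) = k\rho(k)$, as claimed.

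The main subtle point is the correct invocation of the Cycle Lemma, whose hypothesis (increments bounded below by $-1$) is precisely provided by the unit down-step structure of $Z$; this is what makes the conditional probability an \emph{exact} equality rather than an estimate. An equivalent derivation, closer to the Pollaczek--Khinchine formula suggested by the section title, would use first-step analysis to write $\mathbb{P}(\Upsilon) = \rho(k) + (1-\rho(k))\phi_1$ with $\phi_j = \mathbb{P}(M \text{ reaches } \{0, 1, \ldots\} \text{ from } -j)$, then solve the recurrences for $\phi_j$ via the generating function $\Phi(x) = \sum_{j \geq 1}\phi_j x^j$; the fact that $\Phi$ extends continuously to $x = 1$ (since $\sum_j \phi_j = \mathbb{E}[\max_\ell M_\ell] < \infty$, the walk having negative drift and bounded steps) forces the singularity-removal condition $\rho(k)(k-1) = (1-\rho(k))\phi_1$, again giving $\mathbb{P}(\Upsilon) = k\rho(k)$.
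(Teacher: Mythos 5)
Your proof is correct, but the main argument takes a genuinely different route from the paper's. (A minor notation slip aside: you reuse $n$ as the truncation horizon, while $n$ is also the starting height $Z_0=n$; this is harmless because every event involved depends only on the increments of $Z$.) After the same reparameterisation to a random walk with i.i.d.\ steps in $\{k-1,-1\}$ with respective probabilities $\rho(k)$ and $1-\rho(k)$ --- your $M$ and the paper's $S$ differ only by a unit shift in time and in space, $M_\ell=S_{\ell-1}-1$ --- the paper identifies $\mathbb{P}\left(\overline{\Upsilon}\right)$ with $(1-\rho(k))\pi_0$, where $\pi_0$ is the stationary mass at $0$ of the Lindley reflected process with step law $\mu_k$, and evaluates $\pi_0=1/t_0$ by computing the expected first-return time $t_0$ with Wald's identity; this is the Pollaczek--Khinchine route the section title announces. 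You instead condition on the number $D_m$ of down-steps at a finite horizon $m$, invoke the Dvoretzky--Motzkin cycle lemma to get the \emph{exact} conditional probability $(1-kD_m/m)^+$ (the requisite hypothesis, all increments $\geq -1$, is exactly what the unit down-steps of $Z$ provide, as you correctly flag), then pass to the limit via the strong law of large numbers and bounded convergence, using the negativity of the drift $d_k=k\rho(k)-1$. Your approach is more elementary --- it needs neither Lindley-process theory nor Wald's identity --- and yields exact ballot-type formulas at every finite horizon as a bonus; the paper's approach makes the queueing-theoretic connection explicit and avoids the conditioning and limit interchange. The generating-function sketch you close with is a third valid variant, conceptually closest to the paper's, handled by singularity removal rather than renewal theory.
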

\begin{proof}
It is convenient to make some time and space changes to represent our crossing probability in more familar terms, i.e.  in terms of a new random walk $S$ on the integers, with negative drift, starting from 0, and such that $\overline{\Upsilon} = \left\{ \max S_{n}=0\right\} $ holds true (or such that $\overline{\Upsilon}$ and $\tilde{\Upsilon}=\left\{ \max S_{n}=0\right\} $ are closely related events, actually). If we set, for $j\ge-1$,
$$S_{j}=kn-kZ_{1+j}-j, $$
then $S$ is a random walk with step distribution
\[\mu_{k}=(1-\rho(k))\delta_{-1}+\rho(k)\delta_{k-1},\] 
with drift
\[d_{k}=k\rho(k)-1<0,\]
starting from 1 at time -1, and:
$$\overline{\Upsilon}= \left\{S_{-1}=1,\,S_{0}=0\ \text{and}\ \max_{n\ge0} S_{n}=0\right\}.$$
Thus
\[\mathbb{P} \left( \overline{\Upsilon}\right) =\left( 1-\rho\left( k\right) \right) \mathbb{P} \left( \max _{n\geq 0}\left( S_{n}\right) =0\,\vert\, S_0=0\right)=\left( 1-\rho\left( k\right) \right)\mathbb{P} \left( \tilde{\Upsilon}\right) .\]
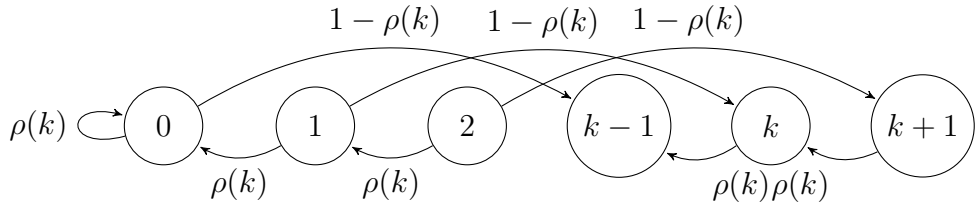
\begin{figure}[h]\centering\begin{tikzpicture}[>=stealth',shorten >=1pt,auto,node distance=2 cm, scale = 1, transform shape]
\node[state] (A)                                    {$0$};
\node[state]         (B) [right of=A]                       {$1$};
\node[state]         (C) [right of=B]                       {$2$};
\node[state] (D)        [right of=C]                            {$k-1$};
\node[state]         (E) [right of=D]                       {$k$};
\node[state]         (F) [right of=E]                       {$k+1$};

\path[->] 
(A) edge [loop left]       node [align=center]  {$ \rho(k) $} (A)
(B) edge [bend left]       node [align=center]  {$ \rho(k)$} (A)
(C) edge [bend left]       node [align=center]  {$\rho(k)$} (B) 
(F) edge [bend left]       node [align=center]  {$ \rho(k) $} (E)
(E) edge [bend left]       node [align=center]  {$\rho(k)$} (D)
(A) edge [bend left]       node [align=center]  {$1-\rho(k)$} (D)
(B) edge [bend left]       node [align=center]  {$1-\rho(k)$} (E)
(C) edge [bend left]       node [align=center]  {$1-\rho(k)$} (F);   
\end{tikzpicture}\caption{The graph of the Lindsey process.}\end{figure}
But we know, from the Pollaczek-Khinchine formula (cf. \cite[Corollary 6.6]{MR1978607}) that $\mathbb{P} \left( \tilde{\Upsilon}\right) $ is the stationary distribution $\pi_{0}$ at 0 of the Lindsey process with step $\mu_{k}$.  For $\ell\ge1$, let $t_{\ell}$ denote the average time needed by the Lindsey process (or, indifferently, by the random walk $S$) to hit 0 starting from position $\ell$: Wald's identity gives that $$t_{\ell}=-\frac\ell{d_{k}}.$$On the other hand, if $t_{0}$ is the expected time of the first return to 0, starting from 0, then, by the Markov property,
\begin{align*}\left( \dfrac {1}{\pi _{0}}=\right)\  t_{0}&=\left( 1-\rho(k)\right)+\rho(k) \left( 1+t_{k-1}\right)\\&=\left( 1-\rho(k)\right)+\rho(k) \left( 1-\dfrac {k-1}{d_{k}}\right)\\&=-\dfrac {1-\rho(k)}{d_{k}},\end{align*}
and finally:
$$\mathbb{P} \left( \tilde{\Upsilon}\right) =\pi _{0}=-\dfrac {d_{k}}{1-\rho(k)}=E_{k}.$$
Thus, as expected, $\mathbb{P} \left( \overline{\Upsilon}\right)=\left( 1-\rho\left( k\right) \right)\mathbb{P} \left( \tilde{\Upsilon}\right) =-d_k=1-k\rho(k)$. 
\end{proof} 

\subsection{Tail probabilities and Hoeffding's inequality}
\label{tails}
For some  process $X=(X_{i})_{i\ge 0}$, let $X_{[\![\ell,m]\!]}$ denote the section $(X_{i})_{\ell\le i\le m}$ of the sample path $X$. First, let us bound the distance between  the random walk $Z^{(n)}$  of Proposition \ref{rwstirling}, and  $Z$:
\begin{prop} 
\label{heurist1}
Under $\mathbb{P}_{N,n}$, $Z^{(n)}$ converges to $Z$ in distribution. Moreover, for $\alpha\in(0,1)$, there exists $C_{\alpha}>0$ such that for $n$ large enough :
\[\sup_{s\le n^{\alpha},\ A\in [\![0,n]\!]^{s+1}}\dfrac{n^{1-\alpha}}{s2^{s}}\left| \mathbb{P} \left( Z_{[\![0,s]\!]}\in A\right) -\mathbb{P}_{N,n} \left( Z^{(n)}_{[\![0,s]\!]}\in A\right) \right|\ \le\ C_{\alpha}.\]
\end{prop}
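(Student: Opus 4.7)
The plan is to compare path probabilities directly under the two measures, exploiting the Markov structure of both chains. For any admissible trajectory $z=(z_0,\ldots,z_s)$ with $z_0=n$ and $z_{i+1}-z_i\in\{-1,0\}$, Proposition \ref{rwstirling} gives
\[
\mathbb{P}_{N,n}\bigl(Z^{(n)}_{[\![0,s]\!]}=z\bigr)=\prod_{i=0}^{s-1}p_i,\qquad p_i=\begin{cases} r(N-i,z_i) & \text{if } z_{i+1}=z_i-1,\\ 1-r(N-i,z_i) & \text{if } z_{i+1}=z_i,\end{cases}
\]
while $\mathbb{P}\bigl(Z_{[\![0,s]\!]}=z\bigr)=\prod_{i=0}^{s-1} q_i$ with $q_i\in\{\rho(k),1-\rho(k)\}$ chosen by the same rule. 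The whole task reduces to bounding $\vert p_i-q_i\vert$ uniformly in $i\le s$, summing the per-path errors, and counting the relevant paths.

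For the per-step bound, I would split
\[
\vert p_i-q_i\vert \le \bigl\vert r(N-i,z_i)-\rho(\lambda(N-i,z_i))\bigr\vert+\bigl\vert \rho(\lambda(N-i,z_i))-\rho(k)\bigr\vert.
\]
Theorem \ref{transitionp} controls the first term by $C_1/z_i$. For the second, note that $n-z_i\le i\le s$ (each step changes $z$ by at most $1$), hence $z_i\ge n-s$, and a direct computation gives
\[
\lambda(N-i,z_i)-(k-1)=\frac{1-i+k(n-z_i)}{z_i},
\]
so this quantity has absolute value at most $(1+(k+1)s)/(n-s)$. Smoothness of $\rho$ near $\lambda=k-1$ (guaranteed by the implicit equation defining it) supplies a Lipschitz constant $L_k$ on a fixed compact neighborhood, yielding a second-term bound of order $s/n$. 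Altogether, for $n\ge 2s$, $\vert p_i-q_i\vert\le Cs/n$ with $C$ depending only on $k$; the same computation shows that $\lambda(N-i,z_i)$ stays in a compact subinterval of $(0,\infty)$, so Theorem \ref{transitionp} applies uniformly along the path.

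For the per-path bound, the telescoping identity $\prod p_i-\prod q_i=\sum_i(p_i-q_i)\prod_{j<i}q_j\prod_{j>i}p_j$, together with $p_j,q_j\in[0,1]$, yields
\[
\bigl\vert \mathbb{P}_{N,n}(Z^{(n)}_{[\![0,s]\!]}=z)-\mathbb{P}(Z_{[\![0,s]\!]}=z)\bigr\vert \le s\cdot\frac{Cs}{n}=\frac{Cs^2}{n}.
\]
The number of trajectories in $A$ carrying nonzero probability under either measure is bounded by $2^s$ (only two transitions are admissible at each step), so summing over $z\in A$ gives
\[
\bigl\vert \mathbb{P}_{N,n}(Z^{(n)}_{[\![0,s]\!]}\in A)-\mathbb{P}(Z_{[\![0,s]\!]}\in A)\bigr\vert \le\frac{Cs^2 2^s}{n}\le C\,s\,2^s\,n^{\alpha-1},
\]
using $s\le n^\alpha$ at the last step. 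This is precisely the announced inequality with $C_\alpha=C$. Convergence in distribution of $Z^{(n)}$ to $Z$ under $\mathbb{P}_{N,n}$ follows immediately: for any fixed $s$ and cylinder set $A$, the bound tends to $0$ as $n\to\infty$, giving convergence of all finite-dimensional distributions.

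The main technical subtlety is really just verifying that Theorem \ref{transitionp} is applicable throughout the initial segment of the walk, which requires $z_i$ to stay bounded away from $0$ and $\lambda(N-i,z_i)$ to stay in a fixed compact subinterval of $(0,\infty)$. For $s\le n^\alpha$ with $\alpha<1$ and $n$ large, both conditions hold by the displayed computation above; everything else is elementary probabilistic bookkeeping.
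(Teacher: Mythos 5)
Your proof is correct and follows essentially the same route as the paper's: compare per-step transition probabilities of the two chains, bound the per-path error by a product-telescoping inequality, and multiply by the number of admissible paths starting at $n$ (at most $2^s$). Your telescoping identity $\prod p_i - \prod q_i = \sum_i (p_i-q_i)\prod_{j<i}q_j\prod_{j>i}p_j$ with $p_j,q_j\in[0,1]$ is precisely the paper's inequality \eqref{glisse} applied with $\theta=1$.

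The one stylistic difference is in the per-step bound. You re-derive it from scratch by splitting off a Theorem~\ref{transitionp} error term and a Lipschitz term in $\rho$, verifying directly that $n-z_i\le i\le s$, $z_i\ge n-s$, and $\lambda(N-i,z_i)$ remains in a compact subinterval of $(0,\infty)$ near $k-1$. The paper instead simply cites the bound \eqref{lipsch}/\eqref{bookbook} already obtained inside the proof of Lemma~\ref{lemmalipsch}, which combines exactly these two ingredients; both paths give $O(s/n)$ (uniformly $O(n^{\alpha-1})$ since $s\le n^\alpha$), and hence the same final $C_\alpha$. Your version is slightly more self-contained and makes explicit the geometric fact (that the initial segment of the walk stays in $\coin_{\eta,2\delta}$) that the paper records only by the phrase ``$n^{-1}W_t$ belongs to $\coin_{\eta,2\delta}$.''

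One small remark worth flagging: you correctly compute $\lambda(N-i,z_i)\to k-1$, yet write the second term of your split as $|\rho(\lambda(N-i,z_i))-\rho(k)|$, and later appeal to smoothness of $\rho$ ``near $\lambda=k-1$.'' The paper carries the same notational tension, writing $\rho(k)$ for the step probability of $Z$ throughout Section~\ref{kornew} even though, in the conventions of Section~\ref{sec:stirling}, the relevant argument of $\rho$ is $\lambda(N,n)=k-1+1/n\to k-1$. Interpreting $\rho(k)$ there as shorthand for the value of $e^{-\impl}$ at $m/\ell=k$ (equivalently $\lambda=k-1$), your estimate closes as intended; without that reading the second term would not tend to zero. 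This is an ambiguity inherited from the paper, not a gap in your argument.
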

\begin{proof} 
We shall use that if
$$\theta \geq \max \left( \max \left| \alpha _{i}\right| ,\max \left| \beta _{i}\right| \right),$$ 
then:
\begin{align}
\label{glisse}
\left|\prod _{i=1}^m\alpha _{i}-\prod _{i=1}^m\beta _{i} \right|&\le \theta^{m-1}\sum_{i=1}^m \left|\alpha _{i}-\beta _{i}\right|.
\end{align}
Consider a sample path $z=\left( z_{j}\right) _{0\leq j\leq s}$ in which $z_0=n $. Let $\Delta_j= z_j-z_{j+1 }\in\{0,1\} $ denote its $j$th increment. Under $\mathbb{P}_{N,n}$, as a consequence of Proposition \ref{rwstirling}, for any given $s$,
$$\mathbb{P} \left(Z^{\left( n\right)}_{[\![0,s]\!]}=z\right)= \prod ^{s-1}_{j=0}r\left( N-j,z_{j}\right) ^{\Delta_j} \left( 1-r\left( N-j,z_{j}\right) \right) ^{1- \Delta_j}, $$  
while
\begin{eqnarray*}
\mathbb{P} \left(Z_{[\![0,s]\!]}=z\right)&= \prod ^{s-1}_{j=0} \rho\left( k\right) ^{\Delta_j} \left( 1-\rho\left( k\right) \right) ^{1- \Delta_j}.
\end{eqnarray*}
For $\alpha\in (0,1) $ and $n$ large enough, and for a suitable choice of $\eta, \delta\in (0,1) $,   $n^{-1}W_{t}$ belongs to $\coin_{\eta, 2\delta}$, so that, according to \eqref{lipsch}, for $ t= N-1 $  and $0\le \ell\le m\le n^{\alpha}$,
\begin{align}
\label{bookbook}
  \left| r\left( N-1-m,n-\ell\right) -\rho \left( k\right) \right| \leq  \dfrac {8}{\eta}\ n^{\alpha-1}.
\end{align}
Since the probability of a given sample path of $Z$, resp. $Z^{(n)}$, is a product of terms $\alpha_{i}$ or $\beta_{i}$ of the following form
$$\rho(k)^{\Delta}(1-\rho(k))^{1-\Delta},\quad\text{resp.}\quad r(m,\ell)^{\Delta}(1-r(m,\ell))^{1-\Delta},$$
in which $-\Delta\in\{0,-1\}$ is the increment for some step of the random walks of $Z$, resp. $Z^{(n)}$, and as a consequence of  \eqref{bookbook} and \eqref{glisse} (with $\theta=1$), the probabilities of these sample paths of length $s\le n^{\alpha}$ differ by at most
$$\dfrac {8}{\eta }\ sn^{\alpha-1}.$$
That a set $A\subset [\![0,n]\!]^{s+1}  $ has at most $2^{s}$ admissible elements starting at position $n$ entails that Proposition \ref{heurist1} holds true with the choice $ C_{\alpha}= \dfrac {8}{\eta }$.
\end{proof}
Let $\mathbb{Z}^{\star}$ (resp. $\mathbb{Z}^{\infty}$)  denote the set of finite (resp. finite or infinite) words on the alphabet $\mathbb{Z}$, and for a finite word $\omega=\omega_0\omega_1\omega_2\dots\omega_s$, set $|\omega|=s+1$. For $0\le s<t\le +\infty$, let us define the crossing set $\Upsilon(s,t)$ as follows:
$$\Upsilon(s,t)= \left\{\omega\in\mathbb{Z}^{\infty}\ \text{s.t.}|\omega|\ge t+1\text{~and~}\ \exists \ell\in [\![s,t]\!]\text{~s.t.~}\omega_{\ell}< n-\dfrac{\ell-1}{k}\right\},$$ 
so that, for instance, 
$$\mathbb{P}\left(\Upsilon_{n,3}\right)= \mathbb{P}\left(Z^{(n)}\in \Upsilon(1,2k^2n^{1/3})\right).$$
The next proposition completes the proof of Theorem \ref{kor78} :
\begin{prop} 
\label{heurist2}  $$\lim_n\mathbb{P}\left(\Upsilon_{n,3}\right) =k\rho(k).$$
\end{prop}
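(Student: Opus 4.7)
The plan is to show $\mathbb{P}_{N,n}(\Upsilon_{n,3}) \to k\rho(k)$ by coupling the inhomogeneous chain $Z^{(n)}$ to the homogeneous walk $Z$ of Section~\ref{pollaczek} via Proposition~\ref{heurist1}, then invoking the Pollaczek-Khinchine formula (the previous Proposition~\ref{heurist2}) to identify the limit. A direct application of Proposition~\ref{heurist1} to the full window $[\![0, 2k^{2}n^{1/3}]\!]$ fails because the factor $2^{s}$ in its error ruins the bound as soon as $s$ exceeds $\ln n$. The fix is to introduce a logarithmic cutoff $s_{0} = \lfloor \alpha_{0}\log_{2}n \rfloor$ with $\alpha_{0} > 0$ small and to decompose
\begin{equation*}
\Upsilon(1, 2k^{2}n^{1/3}) \subset \Upsilon(1, s_{0}) \cup \Upsilon(s_{0}+1, 2k^{2}n^{1/3}),
\end{equation*}
controlling the two pieces independently.

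On the early portion, the event $\{Z^{(n)} \in \Upsilon(1, s_{0})\}$ is measurable with respect to $Z^{(n)}_{[\![0, s_{0}]\!]}$, so Proposition~\ref{heurist1} with any $\alpha \in (0, 1 - \alpha_{0})$ gives
\begin{equation*}
\left|\mathbb{P}_{N,n}\!\left(Z^{(n)} \in \Upsilon(1, s_{0})\right) - \mathbb{P}\!\left(Z \in \Upsilon(1, s_{0})\right)\right| \leq C_{\alpha}\, s_{0}\, 2^{s_{0}} n^{\alpha - 1} = O\!\left(n^{\alpha + \alpha_{0} - 1}\ln n\right) = o(1).
\end{equation*}
Since $Z$ has iid Bernoulli$(\rho(k))$ decrements with $\rho(k) < 1/k$, Hoeffding's inequality applied to $S_{\ell} = n - Z_{\ell}$ yields $\mathbb{P}(S_{\ell} > (\ell-1)/k) \leq e^{-c\ell}$ with $c = 2(1/k - \rho(k))^{2} > 0$, and a union bound produces $\mathbb{P}(Z \in \Upsilon(s_{0}+1, \infty)) = O(e^{-cs_{0}}) = o(1)$. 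Combined with the first Proposition~\ref{heurist2}, this gives $\mathbb{P}(Z \in \Upsilon(1, s_{0})) \to k\rho(k)$, hence also $\mathbb{P}_{N,n}(Z^{(n)} \in \Upsilon(1, s_{0})) \to k\rho(k)$.

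It then remains to show that the late portion is negligible for $Z^{(n)}$. For $\ell \leq 2k^{2}n^{1/3}$ one has $Z^{(n)}_{\ell} = n + O(n^{1/3})$ and hence $\lambda(W_{\ell}) = k - 1 + O(n^{-2/3})$, so Theorem~\ref{transitionp} yields $r(W_{\ell}) = \rho(k) + o(1)$ uniformly in the window. Setting $\varepsilon_{i} = \incr_{i+1} - \mathbb{E}[\incr_{i+1}\mid Z^{(n)}_{i}]$ as in Section~\ref{sec:rwstirling}, the partial sums form a martingale with unit-bounded increments. Azuma's inequality combined with the drift estimate $r(W_{\ell}) \leq \rho(k) + o(1) < 1/k$ then gives $\mathbb{P}_{N,n}(n - Z^{(n)}_{\ell} > (\ell-1)/k) \leq e^{-c'\ell}$ for some $c' > 0$ and all $\ell \geq s_{0}$, so a union bound yields $\mathbb{P}_{N,n}(Z^{(n)} \in \Upsilon(s_{0}+1, 2k^{2}n^{1/3})) = O(e^{-c's_{0}}) = o(1)$. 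The three estimates combine to give the claimed convergence.

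The main obstacle lies in this last step: one must carefully verify that $W_{\ell}$ stays in the range where Theorem~\ref{transitionp} applies throughout the window $\ell \leq 2k^{2}n^{1/3}$, and that the $o(1)$ correction to $\rho(k)$ is small enough that the effective drift of $Z^{(n)}$ still lies strictly below $1/k$ for large $n$. Once this uniformity is in hand, Hoeffding's tail bound transfers from $Z$ to $Z^{(n)}$ without substantial change, and the whole argument boils down to choosing $\alpha_{0}$ small enough that Proposition~\ref{heurist1}'s bound remains $o(1)$ at the chosen cutoff.
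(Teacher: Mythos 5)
Your proposal follows essentially the same route as the paper: both insert a logarithmic cutoff $s_{0}\approx\upsilon\ln n$, split the window into $[\![1,s_{0}]\!]$ and $[\![s_{0}+1,2k^{2}n^{1/3}]\!]$, transfer probabilities on the early window via Proposition~\ref{heurist1}, kill the tail of the limiting walk $Z$ by Hoeffding, and then show that the late-window crossing probability of $Z^{(n)}$ is negligible, identifying the limit by the Pollaczek--Khinchine computation. The one technical point where you depart from the paper is the last step: you control the tail of $Z^{(n)}$ by applying Azuma directly to the martingale $\mathcal{M}_{\ell}=\sum_{i<\ell}\varepsilon_{i}$ together with a uniform bound $r(W_{i})\le\rho+o(1)<1/k$ on the predictable drift, whereas the paper builds a monotone coupling of $Z^{(n)}$ with a homogeneous Bernoulli walk $\widehat Z$ (via shared uniforms $U_{m}$) and then applies Hoeffding to $\widehat Z$; both mechanisms are legitimate and rely on the same input, namely that in $\ell\le 2k^{2}n^{1/3}$ steps one has $Z^{(n)}_{\ell}\ge n-\ell$ deterministically, so $W_{\ell}$ stays in the region where Theorem~\ref{transitionp} gives the uniform transition estimate — a point you flag as needing verification but which is a routine deterministic check.
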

\begin{proof} We shall prove successively that:
\begin{align}
\label{chain1}
 \mathbb{P}\left(\Upsilon\right)
 &= \lim_n \mathbb{P}\left(Z\in\Upsilon(1,s_n)\right)
\\
\label{chain2}
&= \lim_n \mathbb{P}\left(Z^{(n)}\in\Upsilon(1,s_n)\right)
\\
\label{chain3}
&= \lim_n \mathbb{P}\left(\Upsilon_{n,3}\right),
\end{align}
for $s_n=\upsilon\ln n$, in which  $\upsilon\ln 2\le 1-\alpha$. First, Proposition \ref{heurist1} entails \eqref{chain2} at once. Relations \eqref{chain1} and  \eqref{chain3} 
both follow from Hoeffding's inequality.   For \eqref{chain1} it is rather straightforward : if we set $\beta(k)=\rho(k)-\tfrac{1}{k}$, relation \eqref{azuma} entails that $\beta(k)<0$, so that
\begin{align}
\label{azu}
	\mathbb{P} \left(Z_{\ell}-Z_0\le-\dfrac{\ell}{k}\right)=\mathbb{P} \left(Z_{\ell}-Z_0+\ell\rho(k)\le \beta(k)\ell\right)\le \exp\left(-2\beta(k)^2\ell\right).
\end{align}
Thus the probability of a crossing at some point after time $s_{n}$ satisfies
\begin{align*}
\mathbb{P}\left(\Upsilon\right)-\mathbb{P}\left(Z\in\Upsilon(1,s_n)\right)
&\le\mathbb{P}\left(Z\in\Upsilon(s_n,+\infty)\right)
\\
&\le\sum_{\ell\ge s_{n}}\mathbb{P} \left(Z_{\ell}-Z_0\le-\dfrac{\ell}{k}\right)
\\
&\le \sum_{\ell\ge s_{n}}\exp\left(-2\beta(k)^2\ell\right)=\dfrac{n^{-2\upsilon\beta(k)^2}}{1-e^{-2\beta(k)^2}}\ .
\end{align*}
Similarly
\begin{align}
\label{bookbook2}
\mathbb{P}\left(Z^{(n)}\in\Upsilon(1,s_n)\right)-\mathbb{P}\left(\Upsilon_{n,3}\right)
&\le\sum_{s_{n}\le \ell\le 2k^2n^{1/3} }\mathbb{P} \left(Z^{(n)}_{\ell}-Z^{(n)}_0\le-\dfrac{\ell}{k}\right),
\end{align}
but here we cannot use Hoeffding's inequality directly, though $Z^{(n)}_{0}-Z^{(n)}_{\ell }$ is  a sum of Bernoulli random variables, for these  Bernoulli random variables  are not independent. However, we can build, on the same probability space, a copy of $Z^{(n)}_{ }$ and a random walk $\hat{Z}$ in such a way that, for $\ell\le 2k^2n^{1/3} $,  $Z^{(n)}_{m }\le\hat{Z}_{m}$ and $\hat{Z}$'s drift is smaller than $1/k$, using a sequence $U=\left( U_{m}\right) _{m\ge 1}$ of independent random variables, uniform on $(0,1)$. Set $b= \dfrac {1}{k}+\dfrac {\beta \left( k\right) }{2} $ and 
$$\widehat {Z}_{m}-\widehat {Z}_{m+1}=1_{U_{m}\leq b}.$$
For $\widehat {Z}^{(n)}$ and $ {Z}^{(n)}$ to have the same distribution, due to Proposition \ref{rwstirling} , we need to set $\widehat {Z}^{(n)}_0= {Z}^{(n)}_0=n$ and 
$$\widehat {Z}^{(n)}_{m}-\widehat {Z} ^{(n)}_{m+1}=1_{U_{m}\leq r \left(N-m, \widehat {Z}^{(n)}_{m} \right) }.$$
For   $0\le n- \widehat {Z}^{(n)}_{m}\le m\le n^{\alpha}$,  and for $\alpha> 1/3 $, if we choose $n $ large enough, so that $n^{\alpha}>2k^2n^{1/3}$, and so that
$$\dfrac {8}{\eta}\ n^{\alpha-1}\le -\dfrac {\beta \left( k\right) }{2},$$
we can use \eqref{bookbook} to obtain that
$$\forall m\in [\![0, 2k^2n^{1/3}]\!] ,\qquad r \left(N-m, \widehat {Z}^{(n)}_{m} \right)  \le b,\text{~and~} \widehat {Z}^{(n)}_{m} \ge \widehat {Z}_{m}.$$
Hence
$$\mathbb{P} \left(Z^{(n)}_{m}-Z^{(n)}_0\le-\dfrac{m}{k}\right) \le\mathbb{P} \left(\widehat{Z}_{m}-\widehat{Z}_0\le-\dfrac{m}{k}\right) \le \exp\left(-\beta(k)^2 m/2\right),$$
the second inequality due to Hoeffding's inequality. Relation \eqref{chain3} follows.
\end{proof}

\subsection{The profile of an accessible automaton}
\label{automataprofile}
Jointly with Theorem \ref{kor78}, Theorem \ref{mainprofile} has a straightforward consequence : the completion curve of a uniform ADCA has the same limit curve.  More precisely, if $\mathbb{Q}_{k, n}$ denotes the uniform distribution on ADCA with $n$ vertices and $k$ letters, or, equivalently $\mathbb{Q}_{k, n}$  is the conditional probability given  $\mathcal{A}_{k, n}$:
$$\mathbb{Q}_{k,n}\left( B\right) =\dfrac {\mathbb{P}_{N,n} \left( B\cap \mathcal{A}_{k, n}\right) }{\mathbb{P}_{N,n} \left(\mathcal{A}_{k, n} \right) }$$
then
\begin{lem}
For a sequence of events $\left( B_{n}\right)_{n\ge n_0}$, 
$$\mathbb{Q}_{k,n} \left( B_{n}\right) = \mathcal{O}\left(\mathbb{P}_{N,n}\left( B_{n}\right)\right).$$
\end{lem}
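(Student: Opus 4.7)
The plan is to unwind the definition of $\mathbb{Q}_{k,n}$ as a conditional probability and then invoke Theorem \ref{kor78} to get uniform control on the denominator. Writing
$$\mathbb{Q}_{k,n}(B_n)=\frac{\mathbb{P}_{N,n}(B_n\cap \mathcal{A}_{k,n})}{\mathbb{P}_{N,n}(\mathcal{A}_{k,n})}\le \frac{\mathbb{P}_{N,n}(B_n)}{\mathbb{P}_{N,n}(\mathcal{A}_{k,n})},$$
everything reduces to bounding $\mathbb{P}_{N,n}(\mathcal{A}_{k,n})$ away from $0$ for $n$ large.

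For this I would use Theorem \ref{kor78}, which asserts that
$$\lim_{n}\mathbb{P}_{N,n}(\mathcal{A}_{k,n})=1-k\rho(k),$$
and that this limit is strictly positive (since $k\rho(k)<1$, as already noted in the preparatory step for the Pollaczek--Khinchine computation, e.g.\ relation \eqref{azu}). Hence there exists an integer $n_1\ge n_0$ and a constant $c_k=\tfrac12(1-k\rho(k))>0$ such that $\mathbb{P}_{N,n}(\mathcal{A}_{k,n})\ge c_k$ for all $n\ge n_1$. Combined with the previous display this gives
$$\mathbb{Q}_{k,n}(B_n)\le c_k^{-1}\,\mathbb{P}_{N,n}(B_n)\qquad\text{for all }n\ge n_1,$$
which is exactly the $\mathcal{O}(\mathbb{P}_{N,n}(B_n))$ statement, with an implicit constant depending only on $k$.

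There is really no obstacle here: the content of the lemma is entirely carried by Theorem \ref{kor78}; the argument is a one-line manipulation of conditional probabilities and a uniform lower bound on $\mathbb{P}_{N,n}(\mathcal{A}_{k,n})$. The only care needed is to make explicit the dependence of $n_1$ on $k$ (through the rate of convergence in Theorem \ref{kor78}) if one later wants to transfer quantitative error bounds from $\mathbb{P}_{N,n}$ to $\mathbb{Q}_{k,n}$, for instance when combining with Theorem \ref{mainprofile} to obtain the analogous profile result for a uniform ACDA.
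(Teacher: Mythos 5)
Your proof is correct, and it is the standard argument that the paper evidently has in mind but does not spell out: drop $\cap\,\mathcal{A}_{k,n}$ to bound the numerator, then use Theorem~\ref{kor78} to bound the denominator $\mathbb{P}_{N,n}(\mathcal{A}_{k,n})$ away from zero uniformly in $n$ large, absorbing the finitely many small $n$ (for which $\mathcal{A}_{k,n}\ne\emptyset$ so that $\mathbb{P}_{N,n}(\mathcal{A}_{k,n})>0$) into the implicit constant. One small slip in citation: the fact that $k\rho(k)<1$ comes from relation~\eqref{azuma} in the appendix (namely $e^{-\xi}<1/(1+\lambda)$), not from relation~\eqref{azu}, which merely \emph{uses} $\beta(k)<0$ rather than establishing it; otherwise the argument matches what the paper implicitly relies on.
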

Thus, Theorem \ref{mainprofile} translates to large automata at once, and we obtain
\begin{theo}\label{autoprofile} For any $a  >0$, there exists $C_{3 }(n_{0},\varepsilon)>0$ such that, for $n\ge n_{0}$,
\begin{equation*}
\mathbb{Q}_{k,n} \left( \sup_{[\varepsilon,k]} \left| \zeta    _{n}-f_{k-1 } \right| \geq Cn^{-1 /3}\right) \leq C_{4}n^{1/3}e^{-\ln ^{2}n /2}.
\end{equation*}
\end{theo}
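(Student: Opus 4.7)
The plan is to deduce Theorem \ref{autoprofile} as an essentially immediate consequence of Theorem \ref{mainprofile} via the preceding lemma. First I would verify the lemma itself: by Theorem \ref{kor78}, $\mathbb{P}_{N,n}(\mathcal{A}_{k, n}) \to 1 - k\rho(k) > 0$, so that for all $n$ beyond some $n_{0}$ one has $\mathbb{P}_{N,n}(\mathcal{A}_{k, n}) \geq c := (1-k\rho(k))/2$, hence
$$\mathbb{Q}_{k,n}(B_{n}) = \frac{\mathbb{P}_{N,n}(B_{n}\cap \mathcal{A}_{k, n})}{\mathbb{P}_{N,n}(\mathcal{A}_{k, n})} \le c^{-1}\,\mathbb{P}_{N,n}(B_{n}).$$

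Next I would identify $f_{k-1}$ with the limit profile $\zeta(k-1,\cdot)$ of Section \ref{sec:mainr}, which is the natural candidate since $\Lambda(N,n) = (N-n)/n = k-1 + 1/n \to k-1$. Applying Theorem \ref{mainprofile} at $\nu = \Lambda(N,n)$ gives, for the constant $C = C(n_{0},a)$ provided there,
$$\mathbb{P}_{N,n}\left(\sup_{[a,\,1+\Lambda(N,n)]}\left|\zeta_{n}-\zeta(\Lambda(N,n),\cdot)\right|\geq Cn^{-1/3}\right) \leq n^{1/3}e^{-\ln^{2}n/2}.$$
To replace $\zeta(\Lambda(N,n),\cdot)$ by $\zeta(k-1,\cdot)$ on the interval $[a,k] \subset [a, 1+\Lambda(N,n)]$, a standard Gronwall-type comparison for the Cauchy problem \eqref{cp1}, based on the Lipschitz control of $F$ established in Section \ref{sec:deltarho}, yields
$$\sup_{[a,k]}\left|\zeta(\Lambda(N,n),\cdot)-\zeta(k-1,\cdot)\right| = \mathcal{O}(1/n),$$
which is absorbed in the $Cn^{-1/3}$ error by enlarging $C$ to some $C' = C+1$ for $n$ large enough; the triangle inequality then transfers the bound to the pair $(\zeta_{n}, \zeta(k-1,\cdot))$.

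Combining the two pieces,
$$\mathbb{Q}_{k,n}\left(\sup_{[a, k]}\left|\zeta_{n}-\zeta(k-1,\cdot)\right|\geq C'n^{-1/3}\right) \leq c^{-1}\, n^{1/3}e^{-\ln^{2}n/2},$$
which is precisely the assertion of Theorem \ref{autoprofile} with $C_{4} = c^{-1} = 2/(1-k\rho(k))$ (and with $n_{0}$ chosen so that both the density bound $\mathbb{P}_{N,n}(\mathcal{A}_{k, n}) \ge c$ and the hypothesis of Theorem \ref{mainprofile} are in force). The main — though mild — obstacle is the perturbation argument comparing the two profiles uniformly on $[a,k]$: it exploits that both $\zeta(\Lambda(N,n),\cdot)$ and $\zeta(k-1,\cdot)$ reach the value $1$ at their respective right endpoints (consistently with \eqref{accessdeux}), and that $F$ is uniformly Lipschitz on the region $\coin_{\eta,\delta}$ where the trajectories live, making the estimate routine rather than delicate.
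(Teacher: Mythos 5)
Your proof is correct and follows essentially the same route as the paper, which simply declares that "Theorem~\ref{mainprofile} translates to large automata at once" via the lemma $\mathbb{Q}_{k,n}(B_n)=\mathcal{O}(\mathbb{P}_{N,n}(B_n))$. You are in fact more careful than the paper: the paper silently replaces $\zeta(\Lambda(N,n),\cdot)$ (the profile appearing in Theorem~\ref{mainprofile}, with $\Lambda(N,n)=k-1+1/n$) by $f_{k-1}=\zeta(k-1,\cdot)$, whereas you supply the missing Gronwall comparison showing the two profiles differ by $\mathcal{O}(1/n)$ on $[a,k]$, which is indeed what makes the substitution legitimate at the $n^{-1/3}$ scale.
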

Recall that $f _\Lambda$ is defined at the end of Section \ref{sec:coupon}.

\section{Saddle-point method and Stirling numbers}
\label{saddle}
This section is devoted to the proof of Theorem \ref{goodplus}.
\subsection{Generating function and Cauchy formula}
Recall the notations:
\begin{align}
\lambda(m,\;\ell )= \lambda=\dfrac{m-\ell}{\ell},\quad \psi(m,\ell)=\frac{1}{2\pi}\ \frac{m!}{\ell !}\ \left(\frac{e^{\impl  }-1}{\impl  ^{1+\lambda}}\right)^{\ell }\ \sqrt{\frac{\pi}{ v\ell}}.
\end{align}
According to \cite[(6)]{MR0120204} or \cite[Example III.11, p.179]{MR2483235}, we have :
\begin{align*}
\sum\limits_{m\geq1}{m\brace \ell }\frac{z^m}{m!}&=(e^z-1)^\ell \frac{1}{\ell !},\\
&=\tfrac{z^\ell }{\ell !}B(z)^\ell,
\end{align*}
in which
\[B(z):=\dfrac{e^z-1}{z}.\]
By the Cauchy formula,
\begin{align*}
{m \brace \ell }&=[z^{m - \ell  }]\frac{m!}{\ell !}B(z)^\ell=\frac{1}{2i\pi}\oint\frac{m!}{\ell !}B(z)^\ell \frac{dz}{z^{m - \ell  +1}},\\
&=\frac{1}{2i\pi}\int_{-\pi}^{\pi}\frac{m!}{\ell !}B(\impl   e^{i\theta})^\ell (\impl   e^{i\theta})^{-m +\ell  -1}\impl   ie^{i\theta}d\theta,\\
&=\frac{1}{2\pi}\frac{m !}{\ell !}\left(\frac{B\left(\impl  \right) }{\impl  ^{\lambda  }}\right)^\ell\int_{-\pi}^{\pi}g\left(\theta\right)^\ell d\theta\\
&=a_{\ell}\ \int_{-\pi}^{\pi}g\left(\theta\right)^\ell d\theta.
\end{align*}
in which 
\begin{align*}
B(\impl   e^{i\theta})e^{-i\lambda\theta}
&=B\left( \impl   \right)g(\theta),
\end{align*} 
to be compared to  the asymptotic equivalent  to ${m\brace \ell}$ given by \cite[(3)]{MR0120204}, $\psi(m,\ell)$, that satisfies:
\begin{align*}
\psi(m,\ell)&=a_\ell\ \sqrt{\frac{\pi}{ v\ell}}\ =\ a_{\ell }\,\int_{-\infty}^{\infty}e^{-\ell v\theta^2}d\theta,\\
\end{align*} 
We expect that $|g(\theta)|\le 1$ for any $\theta$, or $|B(\impl   e^{i\theta})|\le B(\impl  )$,  since $B(\impl   z)$, as a power series in $z$, has positive coefficients. We also expect that, around 0,
$$g(\theta)\simeq_{0} 1,$$
and more precisely, since $\impl  $ is a saddle-point, we expect that
$$g(\theta)= e^{-v\theta^{2}+\mathcal{O}(\theta^{3})}.$$
According to \eqref{ineqed1}, 
\begin{align*}
v&=\dfrac{(\lambda+1)(\impl  -\lambda)}2\ge \dfrac{\lambda}2>0,
\end{align*}
which entails that 
$$\int_{-\pi}^{\pi}g\left(\theta\right)^\ell d\theta\simeq \int_{-\infty}^{+\infty} e^{-v\theta^{2}\ell} d\theta=\sqrt{\dfrac\pi{v\ell}}.$$
Set
\begin{align}
\label{kazero}
K_\ell ^{(0)}&=\left|\sqrt{\dfrac\pi{v\ell}}-\int_{-\theta_0}^{\theta_0}g\left(\theta\right)^\ell d\theta\right|,
\\
\label{kaun}
K_\ell ^{(1)}&=2\left|\int_{\theta_0}^{\pi}g\left(\theta\right)^\ell  d\theta\right|,
\end{align}
in which a suitable choice of $\theta_{0}$ is made later, so that:
\begin{align}
\nonumber
\psi(m,\ell)&=a_\ell\ \sqrt{\frac{\pi}{ v\ell}}\ =\ a_{\ell }\,\int_{-\infty}^{\infty}e^{-\ell v\theta^2}d\theta,
\\
|\chi(m,\ell)|=\dfrac{\left|\psi(m,\ell)-{m\brace \ell}\right|}{\psi(m,\ell)}&\le\ \sqrt{\frac{ v\ell}{\pi}}\ \left(K_\ell ^{(0)}+K_\ell ^{(1)}\right).
\end{align}
In the next sections, in order to prove Theorem \ref{goodplus},  we  obtain that $$K_\ell ^{(0)}+K_\ell ^{(1)}=\mathcal{O}\left( \ell^{-3/2}\right).$$ 
\subsection{Central term}
In this  section we obtain a  saddlepoint bound for $K_\ell ^{(0)}$, following  \cite{MR2483235}. We write 
\begin{align}
\nonumber
g(\theta)
&=e^{-i\left( 1+\lambda \right) \theta }\quad\dfrac {e^{\impl   \left( e^{i\theta }-1\right) }-e^{-\impl  }}{1-e^{-\impl  }}
\\
\label{poissoncar}
&=e^{-i\left( 1+\lambda \right) \theta }\quad\dfrac {\Phi(\theta)-e^{-\impl  }}{1-e^{-\impl  }}
\end{align} 
in which :
$$\Phi(\theta)=e^{\impl   \left( e^{i\theta }-1\right) }=1+i\impl   \theta -\dfrac {\impl   ^{2}\theta ^{2}}{2}+\mathcal{O}\left( \theta ^{3}\right),$$
is the characteristic function of any Poisson  random variable $Z$ with expectation $\impl$. For our aims,  we need a precise estimation of $g$,  obtained through the Taylor-Laplace inequality, see Section \ref{sec:taylor}. There, we prove that, for suitable constants $(v,\tau,\gamma)$,
\begin{align}
\label{affine6}
\left| g\left( \theta \right) -\left( 1-v\theta ^{2}+\tau \theta ^{3}+\gamma \theta ^{4}\right)\right| \leq T\left(\lambda \right) \theta^{5},
\end{align}
in which, according to   Section \ref{beeeurk},
$$T\left(\lambda \right) =\dfrac {\left( 1+\lambda \right)^{6}}{2\lambda}.$$
Note that, according to \eqref{ineqed1}, 
\begin{align*}
v&=\dfrac{(\lambda+1)(\impl  -\lambda)}2\ge \dfrac{\lambda}2>0.
\end{align*}
We can write
\begin{align*}
K_{\ell}^{(0)}&\le \int_{-\theta_0}^{\theta_0}\left|g(\theta)^{\ell }-e^{-\ell v\theta^2}\right|d\theta+2\,\int_{\theta_0}^{\infty}e^{-\ell v\theta^2}d\theta\\
&=K_{\ell}^{(00)}+K_{\ell}^{(01)}.
\end{align*}
Now
\begin{align*}
K_{\ell}^{(01)}&=  \dfrac{2}{\sqrt{v\ell}}\,\int_{\theta_0\sqrt{v\ell}}^{\infty}e^{-x^2}dx
\\
&\le  \dfrac{2}{\sqrt{v\ell}}\,\int_{\theta_0\sqrt{v\ell}}^{\infty}\ \dfrac{2xe^{-x^2}}{2\theta_0\sqrt{v\ell}}\ dx
\\
&=\ \dfrac{1}{\theta_0 v\ell}\ e^{-\theta_0^2v\ell}.
\end{align*}
Thus,  $\theta_0\sqrt{ \ell}$ has to be large for $K_{\ell}^{(01)}$ to be $o\left(\ell^{-3/2}\right)$. On the other hand, 
\begin{align*}
K_{\ell}^{(00)}&\le K_{\ell}^{(000)}+K_{\ell}^{(001)}+K_{\ell}^{(002)},
\end{align*}
in which, for $\tilde{\gamma}=\gamma-\tfrac{v^{2}}2$, 
\begin{align*}
K_{\ell}^{(000)}&=\int_{-\theta_0}^{\theta_0}\left|g(\theta)^{\ell }-\left(1-v\theta^2+\tau\theta^3+\gamma\theta^4\right)^{\ell }\right|d\theta,
\\
K_{\ell}^{(001)}&=\int_{-\theta_0}^{\theta_0}\left|\left(1-v\theta^2+\tau\theta^3+
\gamma\theta^4\right)^{\ell }-e^{-\ell(v\theta^2-\tau\theta^3-\tilde{\gamma}\theta^4)}\right|d\theta,
\\
K_{\ell}^{(002)}
&=\,\left|\int_{-\theta_0}^{\theta_0}\left(e^{-\ell(v\theta^2-\tau\theta^3-\tilde{\gamma}\theta^4)}-e^{-\ell v\theta^2}\right)d\theta\right|.
\end{align*}
With the help of \eqref{affine6},
since $T(\lambda)$ is bounded for $\lambda\in(\delta,\delta^{-1})$, we obtain that
\begin{align*}
K_{\ell}^{(000)}&\le \ell\,\int_{-\theta_0}^{\theta_0}\left|g(\theta)-\left(1-v\theta^2+\tau\theta^3+\gamma\theta^4\right)\right|d\theta
\\
&\le \,C_{\texttt{000}}\ \ell\ \theta_0^{6},
\end{align*}
in which $C_{\texttt{000}}$ is discussed at Section \ref{beeeurk}. For $K_{\ell}^{(000)}$ to be small, $\theta_0\sqrt{ \ell}$ cannot be too large: 
$$\theta_{0}=\dfrac{\ln\ell}{\sqrt\ell}$$
yields that
\begin{align*}
K_{\ell}^{(000)}&\le C_{\texttt{000}}\,\dfrac{\ln^{6}\ell}{\ell^{2}}=o\left(\ell^{-3/2}\right),
\end{align*}
and that, for $\lambda\in(\delta,\delta^{-1})$, and $\ell\ge e^{3/\delta}$,
\begin{align*}
K_{\ell}^{(01)}&\le\ \dfrac{1}{ v\ln\ell}\ \ell^{-1/2-v\ln\ell}
\\
&\le\ \dfrac{2}{ \delta\ln\ell}\ \ell^{-(1+\delta\ln\ell)/2}\ \le \dfrac1{\ell^{2}}.
\end{align*}
Now
\begin{align*}
K_{\ell}^{(001)}&\le \ell\,\int_{-\theta_0}^{\theta_0}\left|1-v\theta^2+\tau\theta^3+\gamma\theta^4-e^{-v\theta^2+\tau\theta^3+\tilde{\gamma}\theta^4}\right|d\theta
\\
&\le C_{\texttt{001}}\,\dfrac{\ln^{6}\ell}{\ell^{2}}=o\left(\ell^{-3/2}\right),
\end{align*}
in which the dependence of $(C_{\texttt{000}},C_{\texttt{001}},\tau,\gamma,\tilde{\gamma}) $ on $\lambda$ is studied at Section \ref{sec:taylor}, in order to complete the proof of Theorem \ref{goodplus}.  Finally
\begin{align}
\nonumber
K_{\ell}^{(002)}
&\le\,\ell\,e^{\ell|\tilde{\gamma}|\theta_{0}^4}\,
\left|\int_{-\theta_0}^{\theta_0}e^{-\ell v\theta^2}
\left(e^{\tau\theta^3+\tilde{\gamma}\theta^4}-1\right)d\theta\right|,
\\
\label{002}
&\le\,2\ell\,
\left|\int_{-\theta_0}^{\theta_0}e^{-\ell v\theta^2}
\left(e^{\tau\theta^3+\tilde{\gamma}\theta^4}-1-\tau\theta^3-\tilde{\gamma}\theta^4\right)d\theta\right|
\\
\nonumber
&\hspace{3cm}
+\,2\ell \,\left|\int_{-\theta_0}^{\theta_0}e^{-\ell v\theta^2}\left(\tau\theta^3+\tilde{\gamma}\theta^4\right)d\theta\right|
\\
\nonumber
&= K_{\ell}^{(0020)}+ K_{\ell}^{(0021)}.
\end{align}
For  inequality \eqref{002}, note that, due to inequalities \eqref{ineqcoeffs}, if $\lambda(m,\ell)\in(\delta,\tfrac1\delta)$, then $\ell|\tilde{\gamma}|\theta_0^4\le \ln2$ for $\ell$ large enough.  For the first term, since
$$\left| e^{z}-1-z\right| \leq \dfrac {\left| z\right| ^{2}}{2}\sup_{u\in[0,1]} \left| e^{uz}\right|, $$
and $\tau\in i\R$, we have
\begin{align}
\nonumber
K_{\ell}^{(0020)}
&\le\,\ell\  e^{|\tilde{\gamma}|\theta_0^4}  \int_{-\theta_0}^{\theta_0}e^{-\ell v\theta^2}\left(-\tau^2\theta^6+\tilde{\gamma}^2\theta^8\right)d\theta
\\
\nonumber
&\le\,\ell\  e^{|\tilde{\gamma}|\theta_0^4} \left(-\tau^2+\tilde{\gamma}^2\right) \int_{-\theta_0}^{\theta_0}e^{-\ell v\theta^2}\theta^6d\theta
\\
\nonumber
&=\,\ell\,(\ell v)^{-7/2}\  e^{|\tilde{\gamma}|\theta_0^4} \left(-\tau^2+\tilde{\gamma}^2\right) \int_{-\sqrt{v}\ln\ell}^{\sqrt{v}\ln\ell}e^{-x^2}x^6dx
\\
\nonumber
&\le\,\ell^{-5/2}\,v^{-7/2}\  e^{|\tilde{\gamma}|\theta_0^4} \left(-\tau^2+\tilde{\gamma}^2\right)\Gamma\left(7/2\right)
\\
\label{0020}
&\le\,2\sqrt{\pi}\,v^{-7/2}\left(-\tau^2+\tilde{\gamma}^2\right)\   \ell^{-5/2}
\\
\nonumber
&\le\  C_{\texttt{0020}}\ell^{-5/2},
\end{align}
for $\ell$ large enough. Also:
\begin{align*}
K_{\ell}^{(0021)}
&=\,2\ell\,\left|\int_{-\theta_0}^{\theta_0}e^{-\ell v\theta^2}\tilde{\gamma}\theta^4\ d\theta\right|,
\\
&\le\,|\tilde{\gamma}|\ell^{-3/2}v^{-5/2}\int_{\R}e^{-x^2}x^4\ dx,
\\
&\le\,\sqrt\pi|\tilde{\gamma}|v^{-5/2}\ell^{-3/2}\le\, C_{\texttt{0021}}\ell^{-3/2}.
\end{align*}
Thus
\begin{align}
\nonumber
\dfrac{\left|a_\ell K_{\ell}^{(0)}-\psi(m,\;\ell ))\right|}{ \psi(m,\ell) }
&\le
\left((C_{\texttt{000}}+C_{\texttt{001}})\dfrac{\ln^{6}\ell}{\ell^{2}}+C_{\texttt{0020}}\ell^{-5/2}+C_{\texttt{0021}}\ell^{-3/2}+\dfrac{1}{\ell^{2}}
\right)\,\sqrt{\dfrac{v\ell}\pi}
\\
\label{affine1}
&=C_{\texttt{0021}}\sqrt{\tfrac{v}{\pi}}\ \dfrac{1}{\ell}+\mathcal{O}\left(\dfrac{\ln^{6}\ell}{\ell^{3/2}}\right).
\end{align}
\subsection{Tail}
As for $K_\ell ^{(1)}$, relation \eqref{kaun} yields that:
\begin{align}
\nonumber
\left|K_\ell ^{(1)}\right|&\le 2\int_{\theta_0}^{\pi}\left|g\left(\theta\right)\right|^\ell d\theta.
\end{align}
Set:
$$h \left( x\right) =\dfrac {1}{\pi ^{2}}\dfrac {2x^{2}}{\left( 2+x\right) \left( e^{x}-1\right) }.$$
Following \cite[Lemma 1\& 2]{MR1078714}, we prove that
\begin{lem}\label{MeirMoon}
For $\theta\in[-\pi,\pi]$,
$$\left| B\left( \impl   e^{i\theta }\right) \right| \le B\left( \impl   \right) e^{-h \left( \impl  \right) \theta ^{2}},$$
or equivalently
$$\left| g\left(\theta \right) \right| \le e^{-h \left( \impl  \right) \theta ^{2}}.$$
\end{lem}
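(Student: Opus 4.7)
The lemma boils down to the pointwise bound $|B(\xi e^{i\theta})|^2 \le B(\xi)^2 e^{-2h(\xi)\theta^2}$ on $[-\pi,\pi]$, so my plan is to squeeze this out of the power series $B(z) = \sum_{n\ge 0} z^n/(n+1)!$, whose coefficients are non-negative. Multiplying the series for $B(\xi e^{i\theta})$ by its conjugate series $B(\xi e^{-i\theta})$ yields the manifestly non-negative representation
\begin{align*}
B(\xi)^2 - |B(\xi e^{i\theta})|^2 = \sum_{n,m\ge 0} \frac{\xi^{n+m}\bigl(1 - \cos((n-m)\theta)\bigr)}{(n+1)!(m+1)!}.
\end{align*}

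I would then retain only the summands with $|n-m|=1$ (discarding the rest by non-negativity) and apply the Jordan-type inequality $1-\cos\theta \ge 2\theta^2/\pi^2$, valid on $[-\pi,\pi]$. This gives
\begin{align*}
B(\xi)^2 - |B(\xi e^{i\theta})|^2 \ge \frac{2\theta^2}{\pi^2}\,S(\xi),\qquad S(\xi) := 2\sum_{k\ge 0}\frac{\xi^{2k+1}}{(k+1)!(k+2)!}.
\end{align*}
Since $2B(\xi)^2 h(\xi) = 4(e^\xi-1)/[\pi^2(2+\xi)]$, matching constants reduces the target to
\begin{align*}
(2+\xi)\,S(\xi) \ge 2(e^\xi-1).
\end{align*}
This I would verify by a term-by-term comparison of Taylor coefficients: the right side contributes $2/n!$, while the left contributes $4/[(k+1)!(k+2)!]$ when $n=2k+1$ and $2/[(k+1)!(k+2)!]$ when $n=2k+2$. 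Both dominations reduce to the factorial inequalities $2(2k+1)! \ge (k+1)!(k+2)!$ and $(2k+2)! \ge (k+1)!(k+2)!$, which a short induction on $k$ settles.

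Finally, a brief Taylor/monotonicity argument shows that $(2+\xi)(e^\xi-1) \ge 4\xi^2$ for every $\xi>0$, hence $2h(\xi)\pi^2 \le 1$, so the quantity $1 - 2h(\xi)\theta^2$ stays in $[0,1]$ for $|\theta|\le \pi$. The elementary inequality $1-u \le e^{-u}$ then upgrades the linear bound $|B(\xi e^{i\theta})|^2 \le B(\xi)^2\bigl(1 - 2h(\xi)\theta^2\bigr)$ to the required exponential form; taking square roots yields Lemma \ref{MeirMoon}, and the equivalent statement for $|g(\theta)|$ is immediate from $|g(\theta)| = |B(\xi e^{i\theta})|/B(\xi)$.

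The main obstacle is the third step: the decision to throw away all layers except $|n-m|=1$ is delicate, because one needs this single layer to already reproduce the precise factor $(2+\xi)$ hidden in $h(\xi)$. Any coarser termwise manipulation would lose this alignment and produce a strictly weaker constant.
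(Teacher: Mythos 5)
Your proof is correct, and it takes a genuinely different route from the paper's, even though both hinge on the same Jordan-type inequality $1-\cos\theta\ge 2\theta^{2}/\pi^{2}$ on $[-\pi,\pi]$ and on the positivity of the Taylor coefficients of $B$. The paper works directly with $\left|B\left(\impl e^{i\theta}\right)\right|$: it splits off the first two terms $b_{0}+b_{1}e^{i\theta}$ by the triangle inequality, bounds that modulus algebraically by $b_{0}+b_{1}-\tfrac{2b_{0}b_{1}}{b_{0}+b_{1}}\sin^{2}(\theta/2)$, and subsumes the whole tail $\sum_{k\ge2}b_{k}$ in $B(\impl)$; the factor $\tfrac{2b_{0}b_{1}}{b_{0}+b_{1}}=\tfrac{2\impl}{2+\impl}$ produces $h(\impl)$ with no further work. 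You instead square, expand $B(\impl)^{2}-\left|B(\impl e^{i\theta})\right|^{2}$ as a nonnegative double series, and retain the whole diagonal layer $|n-m|=1$ (the natural choice, since for those pairs $(n-m)\theta$ stays in $[-\pi,\pi]$ so the Jordan bound applies termwise); the price is that you must then establish the closed-form coefficient inequality $(2+\impl)S(\impl)\ge 2(e^{\impl}-1)$, which in turn rests on the factorial inequalities $2(2k+1)!\ge(k+1)!(k+2)!$ and $(2k+2)!\ge(k+1)!(k+2)!$. Both arguments land on exactly the same constant $h(\impl)$; your retained layer in fact carries strict slack for $k\ge2$, so with more effort it would yield a slightly sharper exponent, whereas the paper's two-term split is lighter but gives exactly $h$ and no more. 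Your final safeguard $2h(\impl)\pi^{2}\le 1$, via $(2+\impl)(e^{\impl}-1)\ge 4\impl^{2}$ (which reduces to $(\impl-2)^{2}\ge0$ after using $e^{\impl}-1\ge\impl+\impl^{2}/2$), is needed precisely so that the squared bound $1-2h(\impl)\theta^{2}$ stays nonnegative on $[-\pi,\pi]$ before you pass to the exponential and take square roots; the paper avoids this step because it bounds the modulus (not its square) linearly and applies $1-u\le e^{-u}$ directly.
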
 
\begin{proof}For $k\ge0$, set:
$$b_{k}=\dfrac {\impl   ^{k}}{k+1!},
$$
so that:
$$ B\left( \impl   e^{i\theta }\right) =\sum _{k\geq 0}b_{k}e^{ik\theta },$$
and:
$$\left| B\left( \impl   e^{i\theta }\right) \right| \le \left|b_0+b_1e^{i\theta }\right| +\sum_{k\geq 2}b_{k}.$$
But
\begin{align*}
\left| b_{0}+b_{1}e^{i\theta }\right| ^{2}&=\left( b_{0}+b_{1}\cos \theta \right) ^{2}+b_{1}2\sin ^{2}\theta 
\\
&=\left( b_{0}+b_{1}\right) ^{2}+2b_{0}b_{1}\left( \cos \theta -1\right)
\\
&=\left( b_{0}+b_{1}\right) ^{2}-4b_{0}b_{1}\sin^2\left( \tfrac\theta2\right)
\\
&\le \left( b_{0}+b_{1}-\tfrac{2b_{0}b_{1}\sin^2\left( \tfrac\theta2\right)}{b_{0}+b_{1}}\right) ^{2}.
\end{align*}
Thus
$$\left| B\left( \impl   e^{i\theta }\right) \right| \le \sum_{k\geq 0}b_{k}-\tfrac{2b_{0}b_{1}\sin^2\left( \tfrac\theta2\right)}{b_{0}+b_{1}}=B\left( \impl   \right) -\dfrac {2\impl   }{2+\impl   }\sin ^{2}\left( \dfrac {\theta }{2}\right) .$$
For $\theta\in[-\pi,\pi]$,
$$\sin ^{2}\left( \dfrac {\theta }{2}\right) \geq \dfrac {\theta ^{2}}{\pi ^{2}},$$
leading to:
$$\left| B\left( \impl   e^{i\theta} \right) \right| \le B\left( \impl   \right) \left( 1-h\left( \impl   \right) \theta ^{2}\right) \le B\left( \impl   \right) e^{-h\left( \impl   \right) \theta ^{2}}.$$
\end{proof}

Thus, according to Section \ref{lastbound}, for $\ell$ large enough,
\begin{align}
\label{affine2}
\left|K_\ell ^{(1)}\right|&\le 2\pi  \ell^{-h \left( \impl  \right) \ln(\ell)}=o\left( \ell^{-3/2}\right).
\end{align}
Finally we are ready to prove Theorem \ref{goodplus}.
\begin{proof}
Inequality  \eqref{affine1} holds true for $\ell $ large enough, and, on $(0,+\infty)$, its coefficients $C_i$  are  positive continuous functions of $\lambda$, thus, for $\lambda(m,\ell)\in[\delta,\tfrac1\delta]$, they are bounded. One can deal similarly with inequality  \eqref{affine2} (see  Section \ref{lastbound}).
\end{proof}

\bibliographystyle{amsalpha}
\bibliography{coupon}

\section{Appendix: Some special functions}
\label{endix}  

\subsection{$\impl  $ as an implicit function of $\lambda$}
We have seen that  $\impl  $ is an essential parameter in the asymptotic behaviour of the Stirling number 
$${m\brace \ell}={\left( 1+\lambda \right) \ell \brace \ell},$$
in which $\lambda= \lambda(m,\ell)$ is defined by $\lambda=\tfrac{m-\ell}\ell$, and $\impl  $  is  an implicit function of $\lambda$, defined by
\begin{align*}
\left( 1+\lambda \right)\left( 1 -e^{-\impl  }\right) &= \impl  ,\quad \lambda, \impl  \ge0.
\end{align*}
For instance,  the completion curve $\zeta_{\Lambda}$ of Theorem \ref{mainprofile} is defined in terms of $\Lambda=\lambda(N,n)$ and in terms of $\Xi=\impl  (\Lambda)$.  Thus we need to list some of the properties of $\impl  $ that are of interest in our proofs, not all of them being straightforward, for instance in order to prove Theorem \ref{transitionp} in Section \ref{sec:transition}.
\begin{prop}
The function $\impl$ is increasing, nonnegative and concave, and $ \lambda \rightarrow\impl(\lambda )- \lambda $ is  increasing, nonnegative and concave as well. Also, we have:
\begin{align}
\label{zetabound1}
\lambda \leq \impl \left( \lambda \right) &\leq \min \left( 2\lambda ,1+\lambda \right), \quad \lambda \ge0,
\\
\label{symptote}
\impl(\lambda )&= 1 + \lambda  +\mathcal{ O}_{ +\infty}\left( \lambda  e^{-\lambda }\right),
\\
\label{azuma}
e^{-\impl}&<\dfrac1{1+\lambda}.
\end{align}
\end{prop}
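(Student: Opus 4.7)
The plan is to exploit the Lambert-$W_0$ representation already introduced in the paper. Setting $y:=1+\lambda-\impl$, the defining equation $(1+\lambda)(1-e^{-\impl})=\impl$ rewrites as $ye^{-y}=(1+\lambda)e^{-(1+\lambda)}$, so that $-y=W_0\!\left(-(1+\lambda)e^{-(1+\lambda)}\right)$, which lies in $[-1,0]$ for $\lambda\ge 0$. Consequently $y\in(0,1]$ (with $y=1$ exactly when $\lambda=0$), which at once gives $\lambda\le\impl\le 1+\lambda$. The identity $e^{-\impl}=y/(1+\lambda)$, obtained by rearranging $ye^{-y}=(1+\lambda)e^{-(1+\lambda)}$, combined with $y\le 1$, yields \eqref{azuma}, strictly for $\lambda>0$. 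The asymptotic \eqref{symptote} follows from the same identity: as $\lambda\to+\infty$ the right-hand side tends to $0$, forcing $y\to 0$, and then $y=ye^{-y}(1+\mathcal{O}(y))=(1+\lambda)e^{-(1+\lambda)}(1+o(1))$, i.e.\ $1+\lambda-\impl=\mathcal{O}(\lambda e^{-\lambda})$.

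\textbf{Paragraph 2.} To sharpen the upper bound to $\impl\le 2\lambda$ the plan is to first establish the elementary inequality $1-e^{-x}\le\frac{2x}{2+x}$ for $x\ge 0$, which is equivalent to $k(x):=2-(2+x)e^{-x}-x\le 0$ and follows from $k(0)=k'(0)=0$ together with $k''(x)=-xe^{-x}\le 0$. Applied at $x=\impl$ and combined with the defining equation, this gives $\impl=(1+\lambda)(1-e^{-\impl})\le(1+\lambda)\frac{2\impl}{2+\impl}$, i.e.\ $2+\impl\le 2(1+\lambda)$, so $\impl\le 2\lambda$. This completes the proof of \eqref{zetabound1}.

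\textbf{Paragraph 3.} For monotonicity and concavity, implicit differentiation of $(1+\lambda)(1-e^{-\impl})=\impl$, combined with the identity $(1+\lambda)e^{-\impl}=1+\lambda-\impl$, produces
\begin{equation*}
\impl'(\lambda)\;=\;\frac{1-e^{-\impl}}{\impl-\lambda}\;=\;\frac{\impl}{(1+\lambda)(\impl-\lambda)}.
\end{equation*}
Since $1+\lambda>\impl$ (strictly, for $\lambda>0$), this quotient exceeds $1$, so both $\impl$ and $\impl-\lambda$ are strictly increasing in $\lambda$. Eliminating $\lambda$ via $1+\lambda=\impl/(1-e^{-\impl})$ recasts the slope as a function of $\impl$ alone,
\begin{equation*}
\impl'(\lambda)\;=\;\frac{(1-e^{-\impl})^{2}}{1-(1+\impl)e^{-\impl}},
\end{equation*}
whose derivative with respect to $\impl$ has the sign of $h(\impl):=2-\impl-(2+\impl)e^{-\impl}$; the same Taylor argument as for $k$ in Paragraph 2 (namely $h(0)=h'(0)=0$ and $h''(\impl)=-\impl e^{-\impl}\le 0$) shows $h\le 0$. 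Since $\impl$ is itself increasing in $\lambda$, $\impl'(\lambda)$ is then non-increasing in $\lambda$, whence the concavity of $\impl$ and, by linearity, of $\impl-\lambda$. The main obstacle is purely book-keeping: rewriting $\impl'$ as a function of $\impl$ alone requires a clean algebraic manipulation, but once that is done every sign assertion reduces to a two-line Taylor argument.
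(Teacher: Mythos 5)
Your proof is correct, and it is genuinely different from the paper's in two places worth noting. For \eqref{zetabound1}, \eqref{symptote} and \eqref{azuma}, you exploit the Lambert-$W_0$ representation and the substitution $y=1+\lambda-\impl$, from which $ye^{-y}=(1+\lambda)e^{-(1+\lambda)}$ simultaneously delivers $y\in(0,1]$ (hence $\lambda\le\impl\le 1+\lambda$), the identity $e^{-\impl}=y/(1+\lambda)$ (hence \eqref{azuma}, strict for $\lambda>0$), and the asymptotics $y\sim(1+\lambda)e^{-(1+\lambda)}$ (hence \eqref{symptote}); the paper instead proves the lower bound by rewriting it as the elementary $e^{\impl}\ge 1+\impl$, the refined upper bound via $e^{-2x}\ge(1-x)/(1+x)$ (your $1-e^{-x}\le 2x/(2+x)$ is an equivalent restatement), and \eqref{azuma} via $(1+\lambda)e^{-\impl}=1+\lambda-\impl<1$. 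The Lambert route is nicer in that it extracts several facts from one identity that the paper already wrote down in Section \ref{sec:mainr}. For concavity, you avoid the paper's direct computation and factorization of $\impl''$ (which the paper writes as $\impl''=-\impl(\impl-2\lambda)(\impl-\lambda-1)/\big((1+\lambda)^2(\impl-\lambda)^3\big)\le 0$, using \eqref{zetabound1} to read off the sign) by instead eliminating $\lambda$ to get $\impl'=(1-e^{-\impl})^2/\big(1-(1+\impl)e^{-\impl}\big)$ and showing this is non-increasing in $\impl$; composed with the already-proved fact that $\impl$ is increasing in $\lambda$, this gives that $\impl'$ is non-increasing in $\lambda$. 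Both are valid; the paper's factorization is arguably more informative (it locates exactly where $\impl''$ vanishes), while yours trades a second implicit differentiation for two easy one-variable Taylor sign checks.
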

\begin{proof}

{\bf Proof of \eqref{zetabound1}.}
Relation \eqref{zeta} entails
$$\impl\le 1+ \lambda $$
 at once. Since $\impl\ge0$,
$$\left\{ 1+\impl \geq \dfrac {\impl }{1-e^{-\impl }}\right\} \Leftrightarrow \left\{ e^{\impl }\geq 1+\impl \right\}, $$
so, from
$$\dfrac {\impl }{1-e^{-\impl }}=1+\lambda ,$$
we deduce that
$$\impl\ge \lambda .$$
In order to prove that $2\lambda \ge\impl$, we need to prove that
\[1+\lambda =\dfrac {\impl }{1-e^{-\impl }}\ge 1+\dfrac {\impl }{2},\]
but the last inequality holds true for any positive number $\impl$, as a consequence of 
$$e^{-2x}\geq \dfrac {1-x}{1+x},\quad  x\geq 0.$$

{\bf Proof of monotony and concavity of $\impl$ and $\impl-\lambda$.} Note that
\[e^{-\impl(0) } =1-\impl(0)\]
 entails $\impl(0)=0$. For $\impl '\left( 0\right)=2$ we have no additional trouble: when $\impl, \lambda \rightarrow 0_+$,
\[1+\lambda =\dfrac {\impl }{1-e^{-\impl }}= 1+\dfrac {\impl }{2}+ o\left(\impl^2\right).\]
Now, from the implicit function theorem, we obtain:
\begin{align}
\nonumber\impl'\left( \lambda \right)&=-\dfrac {e^{-\impl }-1}{1-\left( 1+\lambda \right) e^{-\impl }}
\\
\label{derive}
&=\dfrac {\impl}{ (1+\lambda )(\impl -\lambda )}
\\
\nonumber&=1+\dfrac {\lambda e^{-\impl }}{\impl -\lambda }
\end{align}
entailing that $\impl$, and $\impl-\lambda $ as well, are increasing. Then
\begin{align*}
\impl ''\left( \lambda \right)&=\dfrac {\impl ' (1+\lambda )(\impl -\lambda )-\impl(\impl -\lambda )-\impl(1+\lambda )(\impl' -1)}{ (1+\lambda )^2(\impl -\lambda )^2}
\\
&=\dfrac {\impl(1-\impl +\lambda -(1+\lambda )\impl' +1+\lambda )}{ (1+\lambda )^2(\impl -\lambda )^2}
\\
&=\dfrac {\impl((2+2\lambda -\impl)(\impl -\lambda ) -\impl )}{ (1+\lambda )^2(\impl -\lambda )^3}
\end{align*}
so that
\begin{align*}
\impl ''\left( \lambda \right)&=\dfrac {-\impl(\impl-2\lambda )(\impl-\lambda -1)}{ (1+\lambda )^2(\impl -\lambda )^3}\le 0.
\end{align*}
It follows that $\impl$ and $\impl- \lambda $ are concave. Finally, \eqref{symptote} is an easy consequence of \eqref{zeta}, and \eqref{azuma} follows from:
$$e^{-\impl   }\left( 1+\lambda \right) =1 +\lambda -\impl  ,$$
and from \eqref{zetabound1}.\end{proof}
We also  need that :
\begin{lem}
\begin{equation}
\label{ineqed1}
2v\ge \lambda. \end{equation}
\end{lem}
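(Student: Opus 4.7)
My plan is to reduce the claimed inequality $2v\ge\lambda$ to the elementary fact that $\sinh(x)\ge x$ for $x\ge 0$, by purely algebraic manipulation of the defining relation $\impl=(1+\lambda)(1-e^{-\impl})$.

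First I would rewrite $2v$ in a more convenient form. Since the implicit equation \eqref{zeta} gives $(1+\lambda)e^{-\impl}=1+\lambda-\impl$, one obtains
\[
\impl-\lambda\;=\;1-(1+\lambda)e^{-\impl},
\]
so that
\[
2v-\lambda\;=\;(1+\lambda)(\impl-\lambda)-\lambda\;=\;1-(1+\lambda)^{2}e^{-\impl}.
\]
Thus the claim $2v\ge\lambda$ is equivalent to $(1+\lambda)^{2}\le e^{\impl}$.

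Next, I would substitute the other form of the implicit relation, $1+\lambda=\impl/(1-e^{-\impl})$, to turn the desired inequality into a one-variable statement in $\impl$:
\[
\frac{\impl^{2}}{(1-e^{-\impl})^{2}}\;\le\;e^{\impl}
\;\Longleftrightarrow\;
\impl^{2}\;\le\;e^{\impl}(1-e^{-\impl})^{2}\;=\;\bigl(e^{\impl/2}-e^{-\impl/2}\bigr)^{2},
\]
i.e. $\impl\le 2\sinh(\impl/2)$, which holds for every $\impl\ge 0$ since $\sinh x-x$ vanishes at the origin and has nonnegative derivative $\cosh x-1\ge 0$.

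There is essentially no obstacle here: the whole argument is a short algebraic rearrangement followed by an appeal to the convexity of $\sinh$. The only thing to be careful about is the sign of $\impl-\lambda$ used implicitly when cross-multiplying; but $\impl\ge\lambda$ is part of \eqref{zetabound1}, and the case $\lambda=0$ (where $\impl=0$) is trivial. So the proof is essentially a one-line reduction to $\sinh(\impl/2)\ge\impl/2$.
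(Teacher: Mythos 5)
Your proof is correct and follows essentially the same route as the paper's: both arguments reduce $2v\ge\lambda$ via the implicit relation to $\impl^2 e^{-\impl}\le(1-e^{-\impl})^2$, i.e. $\impl^2\le 4\sinh^2(\impl/2)$, and then invoke the elementary inequality $\sinh(x)\ge x$ (which is the same as the paper's $\cosh\impl\ge 1+\impl^2/2$). Your intermediate reformulation as $(1+\lambda)^2\le e^{\impl}$ is a slightly cleaner way to organize the same algebra; the parenthetical worry about the sign of $\impl-\lambda$ when cross-multiplying is a red herring, since the only quantity you clear is $(1-e^{-\impl})^2>0$.
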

\begin{proof}
The relation \eqref{ineqed1} can be written successively:
\begin{align*}
\left (\impl  -\dfrac {\impl   }{1-e^{-\impl   }}+1 \right) \left( \dfrac {\impl   }{1-e^{-\impl   }} \right)&\ge \dfrac {\impl   }{1-e^{-\impl   }}-1,
\\
\dfrac {\impl  ^2 e^{-\impl   }}{ \left( 1-e^{-\impl   } \right)^2}&\le 1,
\\
\impl   ^{2}&\le 4\sinh^2 \left( \dfrac {\impl   }{2}\right) = 2\cosh \left( \impl   \right) -2,
\\
1+\dfrac {\impl   ^{2}}{2}&\le \cosh \left( \impl   \right) ,
\end{align*}  	
the last one being clearly true.
\end{proof}

\subsection{Large deviation for the coupon collector}
\label{largedev}
Since, for $\lambda  > 0$, we have:
\begin{align*}
\mathbb{P}\left( T_{n}\left( \omega \right) \leq \left\lfloor\left( 1+\lambda \right) {n}\right\rfloor \right)&=n!\left\{\begin{array}{c}\left\lfloor\left( 1+\lambda \right) {n}\right\rfloor) \\ n \end{array}\right\}n^{-\left\lfloor\left( 1+\lambda \right) {n}\right\rfloor},
\end{align*}
Theorem \ref{good} entails that
$$\mathbb{P}\left( T_{n}\left( \omega \right) \leq \left\lfloor\left( 1+\lambda \right) {n}\right\rfloor \right)\simeq\sqrt {\tfrac {e^{\impl   }-1}{e^{\impl   }-1-\impl   }}\ e^{-nJ \left( \impl   \right) },
$$
in which \begin{align*}
J \left( \impl   \right)&= \dfrac {\impl   }{1-e^{-\impl   }}\left( 1 -\impl   +\ln \left( e^{\impl   }-1\right)\right) -\ln \left( e^{\impl   }-1\right).
\end{align*}
One can write :
\begin{align*}
J \left( \impl   \right)&= \dfrac {(\impl  -1+e^{-\impl   })\ln \left( e^{\impl   }-1\right)+ \impl  (1-\impl  )}{1-e^{-\impl   }}
\end{align*}
and finally
\begin{align*}
J '\left( \impl   \right)\left( e^{\impl  }-1\right) ^{2}&= e^{\impl  }\left( e^{\impl  }-1-\impl  \right) \ln \left( 1-e^{-\impl  }\right) <0.
\end{align*}
Also:
\begin{align*}
\left( 1-e^{-\impl  }\right) J \left( \impl   \right)&=(\impl  -1+e^{-\impl   })\left(\impl   +\ln \left(1-e^{-\impl   }\right)\right)+ \impl  -\impl  ^2
\\
&=(\impl  -1+e^{-\impl   })\ln \left(1-e^{-\impl   }\right)+\impl   e^{-\impl   }=\mathcal{O}_{+\infty} \left( \impl   e^{-\impl  }\right) 
\end{align*}
Thus, $J$ is decreasing and
$$\lim _{+\infty }J \left( \impl   \right) =0,$$
which entails that $J$ is positive.

\subsection{Properties of the limit path.} \label{limitpath} The properties of the sample path $\zeta   =f_{\lambda(x_0,y_0)}$ solution of
\begin{equation}
\label{edfond}
y'=e^{-\impl   \left( \tfrac {x}{y}-1\right) },\quad (x,y)\in A=\left\{ 0 < y < x\right\},\text{~ and~} y(x_0) = y_0,
\end{equation}
between $(0,0)$ and $(x_0,y_0)$ matter to our saddle-point estimates for the Stirling numbers, since these estimates are valid only when the sample path is far away from $\partial A$, or, equivalently, when  $x$ is large enough and $\lambda$  is far  from $\{0,+\infty\}$. We are specially interested by the solution $\zeta_{\Lambda}$ obtained on the interval $[0,1+\Lambda]$ when $(x_0,y_0)=(1+\Lambda,1)$, for it is the asymptotic completion curve mentionned in  Theorem \ref{mainprofile}.
In this section, we prove that $\zeta   $ satisfies 
$$\left\{ \left( x,\zeta   \left( x\right) \right) \vert\ 0<x\leq x_{0},\zeta   \left( x_{0}\right) =y_{0}\right\} \subset A=\left\{ 0 < y < x\right\},
$$
and stays away  from $\partial A$, if $x$ is large enough, as shown in Figure \ref{zoneuneu}.  This follows from the variations of $\lambda$ along the curve $y=\zeta   (x)$, where we have:
$$\lambda(x)=\dfrac {x}{\zeta   (x)}-1.$$
\begin{figure}[ht]
\centering
\includegraphics[width=16cm]{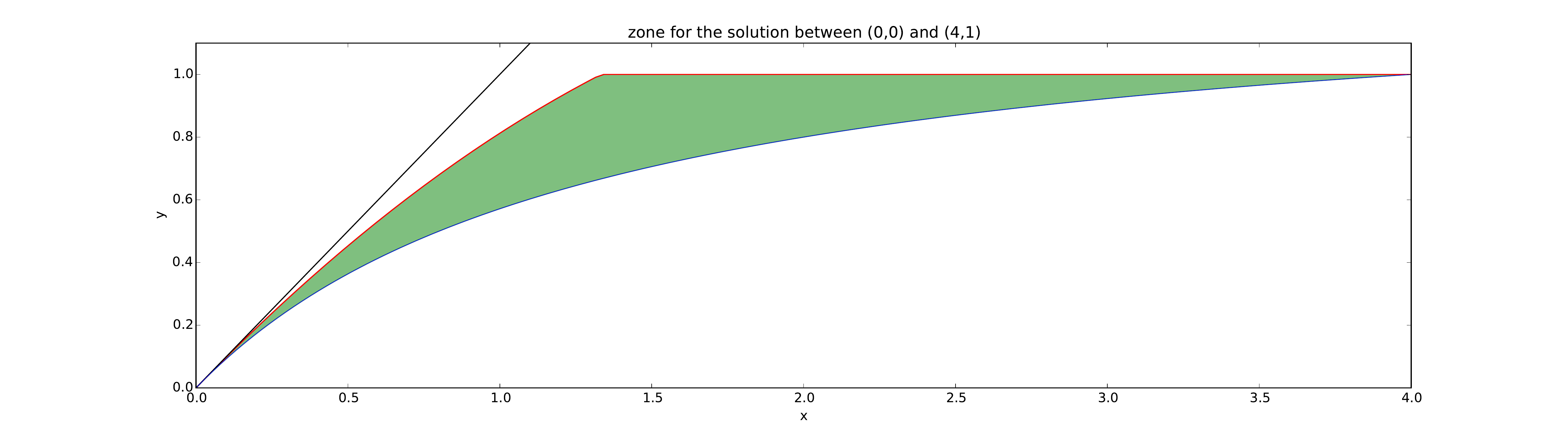}
\\
\includegraphics[width=8cm]{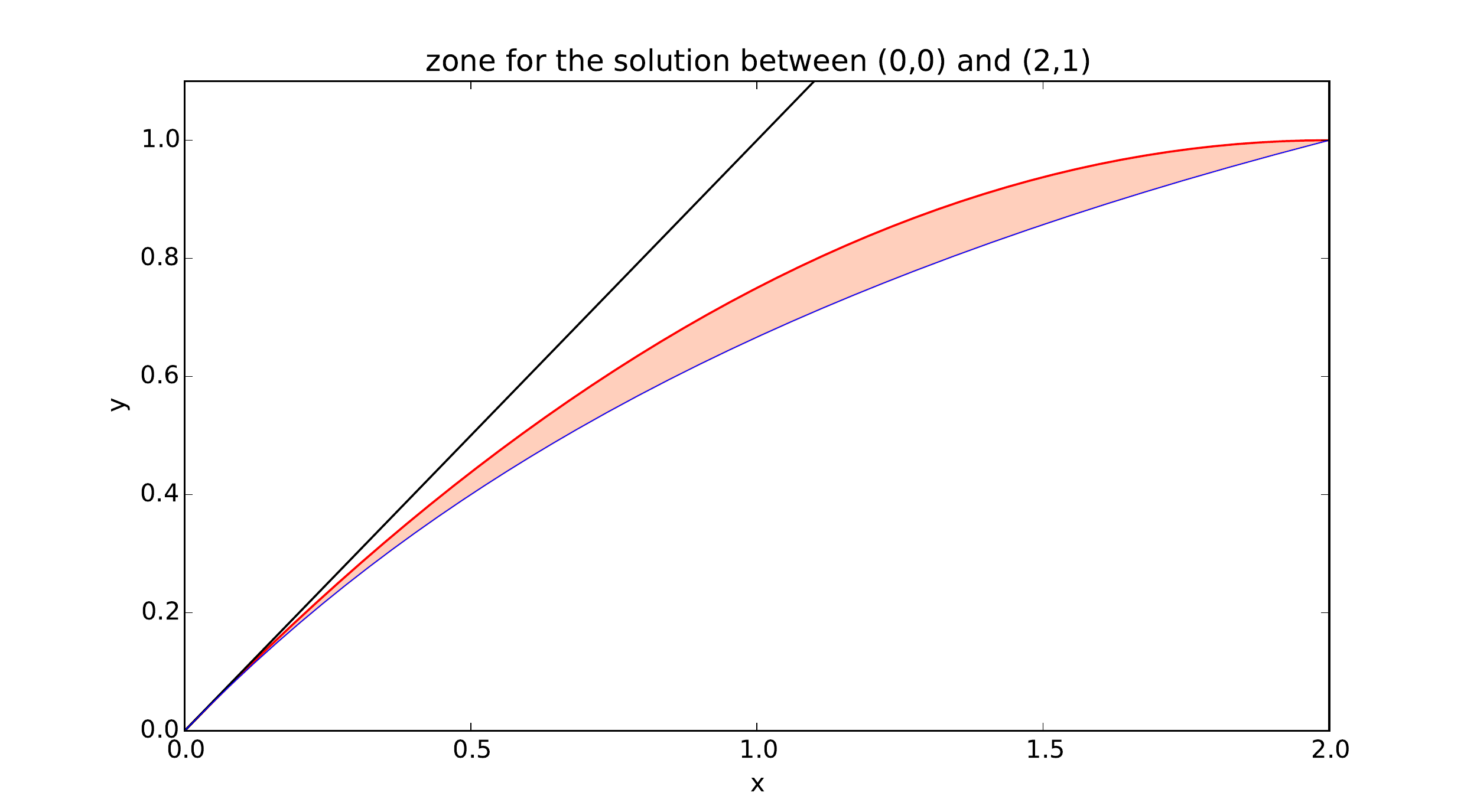}
\caption{Locations of the solutions.}
\label{zoneuneu}
\end{figure}
\begin{prop}
\label{lowerbounded}
The solution $\zeta   $ to \eqref{edfond} going through $(x_0,y_0)$ satisfies, for $ 0< x\le x_0$,
\begin{align}
\label{lambdabound}
-1+\dfrac {1}{1-\dfrac {xy_{0}}{ x_{0}}\left(\dfrac {1}{y_{0}}-\dfrac {1}{x_{0}}\right) }  &\leq\lambda \left( x\right) \leq x\left(\dfrac {1}{y_{0}}-\dfrac {1}{x_{0}}\right),
\\
\label{lowerboundedeq} 
\dfrac {x}{1+x\left( \dfrac {1}{y_{0}}-\dfrac {1}{x_{0}}\right) }&\le  \zeta   (x)\leq x\left( 1-\dfrac {x}{x_{0}}\left( 1-\dfrac {y_{0}}{x_{0}}\right) \right).
\end{align}
\end{prop}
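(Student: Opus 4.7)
The plan is to exploit the equivalence between \eqref{lambdabound} and \eqref{lowerboundedeq}: since $\lambda(x) = x/\zeta(x) - 1$, the upper bound on $\lambda$ rearranges to the lower bound on $\zeta$, and the lower bound on $\lambda$ to the upper bound on $\zeta$. So I will prove \eqref{lowerboundedeq} only. The key is to spot two auxiliary functions which are constant exactly along the two envelopes appearing on the right of \eqref{lowerboundedeq}, and whose derivatives have a sign controlled by one half each of a two-sided sandwich for $\impl(\lambda)$.

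For the lower bound on $\zeta$, I would introduce $\varphi(x) := 1/\zeta(x) - 1/x$, which satisfies $\varphi(x_0) = 1/y_0 - 1/x_0$ and, by the ODE $\zeta'(x)=e^{-\impl(\lambda(x))}$ together with $x/\zeta(x) = 1 + \lambda(x)$,
$$\varphi'(x) = \frac{1 - (1+\lambda)^2 e^{-\impl(\lambda)}}{x^2}.$$
Thus $\varphi'\ge 0$ is equivalent to $\impl(\lambda) \ge 2\ln(1+\lambda)$. I would prove this auxiliary inequality by rewriting \eqref{zeta} as $1+\lambda = \impl/(1-e^{-\impl})$, which turns $\impl \ge 2\ln(1+\lambda)$ into $e^{\impl/2}(1-e^{-\impl}) \ge \impl$, i.e.\ $2\sinh(\impl/2) \ge \impl$, which is the elementary inequality $\sinh(u)\ge u$ on $\R_+$. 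Monotonicity of $\varphi$ then yields $\varphi(x) \le \varphi(x_0)$ for $x \le x_0$, and rearrangement gives the lower bound on $\zeta$.

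For the upper bound on $\zeta$, I would introduce $\psi(x) := (x-\zeta(x))/x^2$, so that $\psi(x_0) = (x_0-y_0)/x_0^2$ and
$$\psi'(x) = \frac{2\zeta(x)/x - 1 - e^{-\impl(\lambda)}}{x^2}.$$
Then $\psi'\le 0$ reduces, via $e^{-\impl} = 1 - \impl/(1+\lambda)$, to $\impl(\lambda)\le 2\lambda$, which is the upper half of \eqref{zetabound1}, already proved. Hence $\psi$ is non-increasing, $\psi(x)\ge \psi(x_0)$ on $(0,x_0]$, and this rearranges to the upper bound on $\zeta$. The main obstacle is really only finding the pair $(\varphi,\psi)$; once chosen, both halves of the proof collapse to a one-line use of the sandwich $2\ln(1+\lambda)\le \impl(\lambda)\le 2\lambda$, one half being immediate from the appendix and the other reducing to $\sinh\ge\mathrm{id}$.
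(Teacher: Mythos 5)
Your proposal is correct and, after unwinding the change of variables, it is essentially the paper's own argument. Your auxiliary functions are $\varphi(x)=1/\zeta(x)-1/x=\lambda(x)/x$ and $\psi(x)=(x-\zeta(x))/x^2=\lambda(x)/\bigl(x(1+\lambda(x))\bigr)$, and their monotonicity ($\varphi'\ge 0$, $\psi'\le0$) is precisely what the paper obtains by first deriving the ODE $\lambda'=2v/x$ with $2v=(1+\lambda)(\impl-\lambda)$, then integrating the two differential inequalities $\lambda'/\lambda\ge 1/x$ and $\lambda'/\bigl(\lambda(1+\lambda)\bigr)\le 1/x$. Your pivot inequality $\impl\ge 2\ln(1+\lambda)$ is in fact equivalent to \eqref{ineqed1}, $2v\ge\lambda$ (both reduce to $1\ge(1+\lambda)^2e^{-\impl}$), and you prove it by $\sinh u\ge u$ where the appendix proves the equivalent form by $\cosh\impl\ge 1+\impl^2/2$; the other pivot $\impl\le 2\lambda$ you correctly take from \eqref{zetabound1}. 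So what you gain is a slightly more self-contained, quadrature-free presentation; the key lemmas and the verification steps coincide with the paper's.
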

\begin{proof}
From \eqref{edfond},  we obtain the differential equation for $\lambda(x)$  :
\begin{align}
\nonumber
\zeta   (x)&=\dfrac {x}{1+\lambda(x) },
\\
\nonumber
\dfrac{\partial\lambda}{\partial x}&=\dfrac {\zeta   (x)-x\zeta   '(x)}{\zeta   (x)^{2}}=\dfrac {1-\left( 1+\lambda(x) \right) \zeta   '(x)}{x}\left( 1+\lambda \right)
\\
\nonumber
\dfrac{\partial\lambda}{\partial x}&=
\dfrac {1-\left( 1+\lambda \right) e^{-\impl}}{x}\left( 1+\lambda \right) 
\\
\label{ed2}
&=\dfrac {2v(x)}{x}.
\end {align}
{\bf Lower bounds. } Relations \eqref{ed2} and   \eqref{ineqed1} yields that
$$\dfrac {\lambda '}{\lambda }\geq \dfrac {1}{x},
$$
thus, for $ 0< x\le x_0$,
\begin{equation}
\label{decrease}
\lambda \left( x\right) \leq \lambda \left( x_{0}\right) \dfrac {x}{x_{0}} =\dfrac {x}{y_{0}}-\dfrac {x}{x_{0}},
\end{equation}
leading to the lower bounds of \eqref{lowerboundedeq}.

{\bf Upper bounds. } With \eqref{zetabound1} and \eqref{ed2} together, we obtain: 
\begin{align*}
		\dfrac {\lambda '}{\lambda \left( 1+\lambda \right) }&\leq \dfrac {1}{x},
		\\
		\dfrac {\lambda \left( x_{0}\right) \left( 1+\lambda \left( x\right) \right) }{\left( 1+\lambda \left( x_{0}\right) \right) \lambda \left( x\right) }&\leq \dfrac {x_{0}}{x},
\end{align*}
leading to the upper bounds in Proposition \ref{lowerbounded}.
\end{proof}

These estimates are also useful in the proof of Kor\v{s}unov's formula, in which we need that a strip close to the limit path intersects the forbidden zone $\{y\le x/k\} $ only close to its endpoints $(0,0)$ and $(k,1)$. We have :
\begin{prop}
	\label{kielbasa}
For $0<\varepsilon\le \dfrac{1}{2(k+1)}$, and $2k\varepsilon\le x\le k-2k^2\varepsilon$,
$$f_{k}\left( x\right) -\varepsilon \geq \dfrac {x}{k}.$$
\end{prop}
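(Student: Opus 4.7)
\medskip
\noindent\textbf{Proof proposal.}
The natural route is to plug in the explicit rational lower bound on $f_k$ furnished by Proposition \ref{lowerbounded}. Taking $f_k$ to be the solution of \eqref{edfond} through the endpoint $(x_0,y_0)=(k,1)$, so that $\lambda(x_0,y_0)=k-1$, the bound \eqref{lowerboundedeq} specialises to
$$f_k(x)\ \ge\ \frac{kx}{k+(k-1)x},\qquad x\in(0,k].$$
Subtracting $x/k$ and clearing denominators produces the concrete witness
$$f_k(x)-\frac{x}{k}\ \ge\ \phi(x)\ :=\ \frac{x(k-1)(k-x)}{k\bigl(k+(k-1)x\bigr)}.$$
So it suffices to prove that $\phi(x)\ge\varepsilon$ on $[2k\varepsilon,\,k-2k^2\varepsilon]$.

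The function $\phi$ is continuous on $[0,k]$, vanishes at $x=0$ and $x=k$, and is strictly positive on $(0,k)$ for $k\ge2$. A short computation of $\phi'$ shows it is unimodal, with its unique interior maximum at $x^{*}=k/(\sqrt k+1)$; in particular $\phi$ is increasing on $[0,x^{*}]$ and decreasing on $[x^{*},k]$. Consequently the minimum of $\phi$ on any closed sub-interval of $(0,k)$ is attained at one of its two endpoints, and the whole proposition reduces to the two numerical inequalities $\phi(2k\varepsilon)\ge\varepsilon$ and $\phi(k-2k^2\varepsilon)\ge\varepsilon$.

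Both are handled by direct substitution. One gets
$$\phi(2k\varepsilon)=\frac{2(k-1)\varepsilon(1-2\varepsilon)}{1+2(k-1)\varepsilon},\qquad \phi(k-2k^2\varepsilon)=\frac{2(k-1)\varepsilon(1-2k\varepsilon)}{1-2(k-1)\varepsilon},$$
and cross-multiplying turns each of $\phi\ge\varepsilon$ into a linear inequality in $\varepsilon$. Their thresholds, namely $(2k-3)/(6(k-1))$ at the left endpoint and $(2k-3)/\bigl(2(k-1)(2k-1)\bigr)$ at the right endpoint, both dominate $1/\bigl(2(k+1)\bigr)$ iff $k\ge 2$; since $k\ge 2$ by hypothesis, both endpoint inequalities hold uniformly for every admissible $\varepsilon$.

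There is no substantive obstacle in this scheme: the whole argument is algebraic once one has the rational lower bound of Proposition \ref{lowerbounded}. The only delicate point is the calibration $\varepsilon\le 1/(2(k+1))$ together with the endpoint margins $2k\varepsilon$ and $2k^{2}\varepsilon$; these constants are precisely what makes the two endpoint inequalities survive simultaneously as soon as $k\ge 2$, which explains why the statement is phrased in exactly this form.
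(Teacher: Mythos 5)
Your proof is correct and follows essentially the same route as the paper: both invoke the rational lower bound of Proposition \ref{lowerbounded} with $(x_0,y_0)=(k,1)$, reduce the claim to showing that $\phi(x)=\tfrac{x(k-1)(k-x)}{k(k+(k-1)x)}\ge\varepsilon$ on the interval, and then check this only at the two endpoints. The single (cosmetic) difference is the justification of the endpoint reduction — the paper clears denominators and compares a concave quadratic to a linear function, whereas you argue unimodality of $\phi$ directly — and you usefully make the endpoint thresholds $\tfrac{2k-3}{6(k-1)}$ and $\tfrac{2k-3}{2(k-1)(2k-1)}$ explicit, a computation the paper leaves to the reader.
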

\begin{proof}
Due to \eqref{lowerboundedeq}, we only need to prove that
$$\dfrac {x}{1+x\left( 1-\dfrac {1}{k}\right) }-\dfrac {x}{k}\ge \varepsilon,$$
when $x$ is in the interval, and it follows easily from the fact that
$$\left( k-1\right) x\left( k-x\right) \geq \varepsilon k\left( k+x\left( k-1\right) \right) $$
holds true at the endpoints.
\end{proof}

\subsection{Small variations of $\rho$}
\label{sec:deltarho}
In this section, we  bound  the variations of $\rho$ in order to obtain the accuracy of the Euler scheme used in  Theorem \ref{mainprofile}, cf. \eqref{boundsegonde}, and also to obtain the precision of the approximation of the completion curve by a random walk in the proof of Kor\v{s}unov's formula.
Since $\rho=e^{-\impl}$, according to \eqref{derive}
\begin{align*}
\dfrac{\partial\rho}{\partial\lambda}&=-\dfrac{\partial}{\partial\lambda}\left( 1- e^{-\impl }\right)
=-\dfrac{\partial}{\partial\lambda}\left( \dfrac{\impl}{1+\lambda}\right)
\\
&=\dfrac {-(1+\lambda )\impl'+\impl}{(1+\lambda )^2}
\\
&=\dfrac {\impl(\impl-\lambda-1)}{(\impl-\lambda )(1+\lambda )^2}
\end{align*}
Thus, according to \eqref{ed2},  a sample path $\zeta   $ solution of \eqref{edfond} satisfies
\begin{align*}
0\ge\zeta   ^{\prime\prime}=\dfrac{\partial\rho}{\partial x}
&=\dfrac {\impl(\impl-\lambda-1)}{(\impl-\lambda )(1+\lambda )^2}\times\dfrac {(\impl-\lambda )(1+\lambda )}x
\\
&=\dfrac {\impl(\impl-\lambda-1)}{x(1+\lambda )}\ \ge\ -\dfrac {1}{x}.\end{align*}
Similarly, anywhere in the domain,
\begin{align*}
|F'_{y}|
&=\left\vert -\dfrac {\impl(\impl-\lambda-1)}{(\impl-\lambda )(1+\lambda )^2}\times\dfrac {1+\lambda }y\right\vert
\\
&=\dfrac {\impl(\lambda+1-\impl)}{(\impl-\lambda )x}\ \le\ \dfrac {2}{x},
\\
|F'_{x}|
&=\left\vert \dfrac {\impl(\impl-\lambda-1)}{(\impl-\lambda )(1+\lambda )^2}\times\dfrac {1}y\right\vert
\ \le\ \dfrac {2}{x},\end{align*}
since $\dfrac {\impl(\lambda+1-\impl)}{(\impl-\lambda )}\le 2$ holds true, for it reduces to $2(e^{\impl }-1-\impl)\ge\impl^2$.
Thus we have:
\begin{prop}
\label{prop-deltarho}
If $\{(x,\zeta   (x)),(x,y)\}\subset\coin_{\eta,\delta}$,
$$|\zeta   ''(x)|\le\dfrac {1}{\eta}\quad\text{and}\quad|F'_{y}(x,y)|\le\ \dfrac {2}{\eta},\ |F'_{x}(x,y)|\le\ \dfrac {2}{\eta}.$$
\end{prop}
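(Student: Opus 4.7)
The proposal is to grind through the chain rule computations already sketched in the text preceding the statement, and then invoke the condition $x \ge \eta$ coming from the definition of $\coin_{\eta,\delta}$ to convert the bounds in $1/x$ into bounds in $1/\eta$.

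First I would compute $\partial \rho/\partial \lambda$. Since $\rho = e^{-\impl} = 1 - \impl/(1+\lambda)$ by \eqref{zeta}, a direct differentiation using \eqref{derive} gives
\[
\frac{\partial \rho}{\partial \lambda} = \frac{\impl(\impl - \lambda - 1)}{(\impl-\lambda)(1+\lambda)^2}.
\]
Note that $\impl \le 1+\lambda$ by \eqref{zetabound1}, so this derivative is $\le 0$ (as expected: $\rho$ is a decreasing function of $\lambda$). For the bound on $\zeta''$, recall the ODE $y' = F(\tfrac{x-y}{y}) = \rho(\lambda(x,y))$ and the relation \eqref{ed2} giving $\partial \lambda/\partial x = 2v(x)/x = (\impl-\lambda)(1+\lambda)/x$ along the curve. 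Chain rule yields
\[
\zeta''(x) = \frac{\partial \rho}{\partial \lambda}\cdot\frac{\partial \lambda}{\partial x} = \frac{\impl(\impl - \lambda - 1)}{x(1+\lambda)},
\]
which is nonpositive and bounded below by $-1/x$ since both $\impl/(1+\lambda) \le 1$ and $(1+\lambda-\impl)/1 \le 1$ (the first from \eqref{zetabound1}, the second is immediate). Combined with $x \ge \eta$ this yields $|\zeta''(x)| \le 1/\eta$.

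For the partial derivatives of $F(x,y) = \rho\bigl(\tfrac{x-y}{y}\bigr)$, I would apply the chain rule to $\lambda(x,y)= x/y - 1$, giving $\partial \lambda/\partial y = -x/y^2 = -(1+\lambda)/y$ and $\partial \lambda/\partial x = 1/y$. Hence
\[
F'_y = -\frac{\impl(\impl - \lambda - 1)}{(\impl-\lambda)(1+\lambda)^2}\cdot\frac{1+\lambda}{y}
= \frac{\impl(\lambda+1-\impl)}{(\impl-\lambda)\,x},
\]
and similarly $F'_x$ has the same shape divided by $x$ rather than $y$, up to sign. The crucial analytic inequality is
\[
\frac{\impl(\lambda+1-\impl)}{\impl-\lambda}\le 2,
\]
which, after multiplying by $\impl-\lambda > 0$ and rewriting via $1+\lambda = \impl/(1-e^{-\impl})$, is equivalent to $2(e^\impl - 1 - \impl) \ge \impl^2$; this is the tautology $\impl^2/2 + \impl^3/6 + \cdots \ge 0$ obtained from the Taylor series of $e^\impl$. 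This gives $|F'_y(x,y)| \le 2/x \le 2/\eta$, and the same argument (using $y\ge \delta x/(1+\delta) \ge \delta\eta/(1+\delta)$ if needed, though the factor $(1+\lambda)$ cancels in the computation of $F'_x$ so one gets $2/x$ directly) gives $|F'_x(x,y)| \le 2/\eta$.

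The only real content is the analytic inequality $2(e^\impl - 1 - \impl) \ge \impl^2$; the rest is bookkeeping of partial derivatives. Since this inequality is elementary, I do not expect any serious obstacle: the proposition is essentially a collation of the calculations already carried out in the paragraph introducing the statement, with the definition of $\coin_{\eta,\delta}$ used to convert $1/x$ into $1/\eta$.
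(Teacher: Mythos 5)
Your proof is correct and follows essentially the same route as the paper: chain rule on $\rho\circ\lambda$, the explicit formula $\partial\rho/\partial\lambda=\impl(\impl-\lambda-1)/\big((\impl-\lambda)(1+\lambda)^2\big)$, the elementary bounds $\lambda\le\impl\le1+\lambda$, the key inequality $\impl(\lambda+1-\impl)/(\impl-\lambda)\le 2$ reduced to $2(e^{\impl}-1-\impl)\ge\impl^2$, and finally $x>\eta$ from the definition of $\coin_{\eta,\delta}$. The only cosmetic slip is the phrase that the factor $(1+\lambda)$ ``cancels'' in $F'_x$; more precisely $(1+\lambda)^2 y=(1+\lambda)x$, leaving a harmless extra $(1+\lambda)\ge 1$ in the denominator, so the bound $|F'_x|\le 2/x$ still holds.
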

\subsection{Proof of Theorem \ref{transitionp}}
\label{sec:transition}
\subsubsection{Small variations of $\impl  $}
\label{swimpl}
For $m>\ell\ge2$, $\lambda>0$ and $\tilde\lambda=\tfrac{ m-1}{\ell-1}-1 = \lambda+\tfrac{\lambda}{\ell-1}$, $\impl   =\impl \left( \lambda \right) $, $\tilde\impl   =\impl \left( \tilde\lambda \right) $, we set, for any real function $f$,
$$\Delta(f)=f\left( \tilde{\lambda }\right) -f\left( \lambda \right).$$
Then we have:
\begin{prop}For $m>\ell\ge2$, $\lambda>0$ , we have
\label{Taylor}
\begin{align*}
\left| \Delta ({\impl   }) -\dfrac {\lambda\impl   }{(1+\lambda)(\impl    -\lambda )(\ell-1)}\right|
&\le  \dfrac {\lambda ^{2}}{2\left(\impl    -\lambda \right) ^{3}\left( \ell-1\right) ^{2}},
\\
\left|  \Delta ({\ln\impl   })  -\dfrac {\lambda}{(1+\lambda)(\impl    -\lambda )(\ell-1)}\right|
&\le  \dfrac {\lambda ^{2}}{8\left(\impl    -\lambda \right) ^{3}\left( \ell-1\right) ^{2}},
\\
\left| \Delta \left( \ln \left( 1+\lambda \right) \right) -\dfrac {\lambda }{\left( 1+\lambda \right) \left( \ell-1\right) }\right| 
&\le \dfrac {\lambda ^{2}}{2\left( 1+\lambda \right) ^{2}\left( \ell-1\right) ^{2}}.
\end{align*}
\end{prop}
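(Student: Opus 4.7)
The plan is to apply Taylor's theorem with Lagrange remainder to each of the three functions $\impl$, $\ln\impl$, and $\ln(1+\cdot)$ on the interval $[\lambda,\tilde\lambda]$, since $\tilde\lambda-\lambda=\lambda/(\ell-1)$ is small when $\ell$ is large. In each case, the first-order Taylor term matches the announced main term, so the only work lies in showing that the second derivative admits, on the whole interval $[\lambda,\tilde\lambda]$, an upper bound of the right form. The monotonicity facts already established in this appendix, in particular that $\lambda\mapsto\impl(\lambda)-\lambda$ is non-decreasing, will allow us to estimate the second derivative at any intermediate point $\mu\in[\lambda,\tilde\lambda]$ by a quantity depending only on $\lambda$.

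For the first inequality, I would start from $\impl'(\lambda)=\impl/((1+\lambda)(\impl-\lambda))$, which is \eqref{derive} and immediately produces the first-order term $\lambda\impl/((1+\lambda)(\impl-\lambda)(\ell-1))$. For the remainder, the explicit formula $\impl''=-\impl(\impl-2\lambda)(\impl-\lambda-1)/((1+\lambda)^2(\impl-\lambda)^3)$ derived earlier in this appendix combines with the bounds $\impl\le 1+\lambda$, $|\impl-2\lambda|\le\lambda$ and $|\impl-\lambda-1|\le 1$ from \eqref{zetabound1} to give $|\impl''(\mu)|\le 1/(\impl(\mu)-\mu)^3$. The monotonicity of $\impl-\lambda$ then yields $|\impl''(\mu)|\le 1/(\impl(\lambda)-\lambda)^3$ uniformly on $[\lambda,\tilde\lambda]$, and Lagrange's remainder closes the bound.

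For the second inequality, differentiating $(\ln\impl)'=\impl'/\impl=1/((1+\lambda)(\impl-\lambda))$ yields, after a short calculation using $\impl'-1=\lambda(1+\lambda-\impl)/((1+\lambda)(\impl-\lambda))$, the identity
\[
(\ln\impl)''(\lambda)=-\frac{(\impl-\lambda)(\impl-2\lambda)+\lambda}{(1+\lambda)^2(\impl-\lambda)^3}.
\]
The main obstacle is to show that the numerator is dominated by $(1+\lambda)^2/4$. Setting $t=\impl-\lambda\in[0,\lambda]$, the numerator reads $\lambda+t(t-\lambda)$; as a quadratic in $t$ on $[0,\lambda]$ it reaches its maximum $\lambda$ at the endpoints and its minimum $\lambda-\lambda^2/4$ at $t=\lambda/2$. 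A direct verification shows that both extrema lie in $[-(1+\lambda)^2/4,(1+\lambda)^2/4]$ (both reduce to $(\mu-1)^2\ge0$ up to trivial manipulations). Hence $|(\ln\impl)''(\mu)|\le 1/(4(\impl(\mu)-\mu)^3)\le 1/(4(\impl(\lambda)-\lambda)^3)$, and Lagrange's remainder produces the factor $\lambda^2/8$.

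The third inequality is essentially immediate: $(\ln(1+\lambda))'=1/(1+\lambda)$ gives the announced first-order term, while $|(\ln(1+\mu))''|=1/(1+\mu)^2\le 1/(1+\lambda)^2$ on $[\lambda,\tilde\lambda]$ directly delivers the remainder bound $\lambda^2/(2(1+\lambda)^2(\ell-1)^2)$. The only genuinely non-routine part of the whole argument is the algebraic identification and bounding of the numerator in $(\ln\impl)''$; once that is in hand, the three estimates follow from Taylor's theorem with essentially no further work.
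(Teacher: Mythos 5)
Your proposal is correct and follows essentially the same route as the paper: explicit formulas for $\impl''$ and $(\ln\impl)''$, pointwise bounds on them using $\lambda\le\impl\le\min(2\lambda,1+\lambda)$ and the monotonicity of $\impl-\lambda$ to freeze the evaluation point at $\lambda$, then Taylor's theorem with Lagrange remainder. The paper organizes the key estimate on the numerator of $(\ln\impl)''$ slightly differently, writing $|\lambda-(2\lambda-\impl)(\impl-\lambda)|\le\max\bigl(\lambda,(2\lambda-\impl)(\impl-\lambda)\bigr)\le(1+\lambda)^2/4$, whereas you examine the extrema of the quadratic $t\mapsto\lambda+t(t-\lambda)$ on $[0,\lambda]$, but the two computations are interchangeable and yield the same constant.
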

\begin{proof} We need a bound for
\begin{align*}
\vert\impl ''\left( \lambda \right)\vert&= \dfrac {\impl }{1+\lambda }\   \dfrac {2\lambda -\impl }{1+\lambda }\  \dfrac {1+\lambda -\impl }{\left( \impl -\lambda \right) ^{3}}\\
&\le \dfrac {1}{\left( \impl -\lambda \right) ^{3}}.
\end{align*}
Thus,
\begin{align*}
\left| \tilde {\impl   }-\impl   -\dfrac {\lambda }{\ell-1}\impl '\left( \lambda \right) \right|
&\le  \dfrac {1}{\left( \impl(\lambda ) -\lambda \right) ^{3}}\dfrac {\lambda ^{2}}{2\left( \ell-1\right) ^{2}}
\\
&\le  \dfrac {\lambda ^{2}}{2\left(\impl    -\lambda \right) ^{3}\left( \ell-1\right) ^{2}}.
\end{align*}
In order to bound $\left| \ln\tilde {\impl   }-\ln\impl   \right|$, after some computations starting from:
$$\left( \ln \impl \right) ''=\dfrac {\impl '' \impl -\impl '^{2}}{\impl ^{2}},$$
we obtain:
\begin{align*}
\left| \left( \ln \impl \right) ''\right|
&=\left|\dfrac{(\impl-2\lambda )(\impl-\lambda )+\lambda }{(1+\lambda )^2(\impl -\lambda )^3}\right|
\\
&\le\dfrac{1}{4(\impl -\lambda )^3},
\end{align*}
since
\begin{align*}
\left| (\impl-2\lambda )(\impl-\lambda )+\lambda\right|
&\le\max\left((2\lambda-\impl )(\impl-\lambda ),\lambda\right)\le\max\left(\dfrac{\lambda^2}4,\dfrac{(1+\lambda)^2}4\right),
\end{align*}
yielding a bound, for the second  derivative, that entails the desired result.
\end{proof}

\subsubsection{Final argument}
Now we can use Theorem \ref{goodplus} to bound the error $\errtr$ in the approximation of the transition probability $r\left( m,\ell \right)$ by $\rho \left( \lambda\right)=e^{-\impl}$:
$$\left| r\left( m,\ell \right)-\rho \left( \lambda\right)  \right| =\errtr \left( m,\ell \right).$$
Set :
$$\tilde{r}\left( m,\ell\right) =\dfrac {\psi \left( m-1,\ell-1\right) }{\psi \left( m,\ell\right) },$$
so that :
\begin{align*}
\errtr \left( m,\ell \right) &\leq \left|r\left( m,\ell \right) -\tilde{r}\left( m,\ell \right) \right|+\left|\tilde{r}\left( m,\ell \right)-\rho \left( \dfrac {m-\ell }{\ell }\right) \right| 
\\
&=\errtr_{1}\left( m,\ell \right)+\errtr_{2}\left( m,\ell \right).
\end{align*}
First :
\begin{align*}
\errtr_1 \left( m,\ell \right) &\le \dfrac {\psi \left( m-1,\ell -1\right) }{
{ m\brace\ell} }\left| \chi\left( m,\ell \right) -\chi\left( m-1,\ell -1\right) \right|.
\end{align*}
Since
$$0\le \dfrac {{ m-1\brace\ell -1}}{{ m\brace\ell} }\le 1,$$
we have
\begin{align*}
\dfrac {\psi \left( m-1,\ell -1\right) }{{ m\brace\ell} }&\le \dfrac {\psi \left( m-1,\ell-1\right) }{{ m-1\brace\ell-1} }
\\
&=\dfrac {1}{1+ \chi\left( m-1,\ell-1 \right)}.
\end{align*}
According to Theorem \ref{goodplus}, 
$$\chi\left( m-1,\ell-1 \right) =\ \mathcal{O}\left(\frac{1}{\ell }\right),$$
thus, for $\ell$ large enough, $\chi\left( m-1,\ell-1 \right) \ge -\tfrac12$ and
\begin{align*}
\errtr_1 \left( m,\ell \right) &\le 2|\chi\left( m,\ell \right)| +2|\chi\left( m-1,\ell -1\right)|=\ \mathcal{O}\left(\frac{1}{\ell }\right),
\end{align*}
Now, for some $ u\in[0, 1]$, 
\begin{align*}
\errtr_{2}\left( m,\ell \right)&=e^{-\impl   }\left( e^{\theta \left( m,\ell\right) }-1\right) =e^{-\impl   +u\theta \left( m,\ell\right) } \theta \left( m,\ell\right) 
\end{align*}
in which $\theta \left( m,\ell\right) $, that turns out to be $\mathcal{O}\left(\frac{1}{\ell }\right)$, is defined as follows :
\begin{align*}\theta \left( m,\ell\right) 
&=\impl  +\ln \psi \left( m-1,\ell -1\right) -\ln \psi \left( m,\ell \right) .
\end{align*}
We write
\begin{align*}
\theta \left( m,\ell\right) &=A+B,
\end{align*}
with
\begin{align*}
A&=\ln\left( \dfrac {e^{\tilde\impl  }-1}{e^{\impl  }-1}\right) ^{\ell }\left( \dfrac {\impl   }{\tilde\impl   }\right) ^{m}
\\
&=\ell\left( \ln\left( \dfrac {e^{\tilde\impl  }-1}{e^{\impl  }-1}\right)\left( \dfrac {\impl   }{\tilde\impl   }\right) ^{1+\lambda}\right)
\\-
&=\ell\left( \ln\left( \dfrac {e^{\tilde\impl  }(1+\lambda)}{e^{\impl  }(1+\tilde\lambda)}\right)\left( \dfrac {\impl   }{\tilde\impl   }\right) ^{\lambda}\right),
\\
\dfrac {A}{\ell }&=\Delta \left(\impl  \right)-\lambda\Delta\left(\ln\impl   \right)-\Delta \left(\ln (1+\lambda)\right),
\\
2B&= -\Delta \left(2\impl  +\ln (\impl   -\lambda) \right) + \Delta \left( \ln (1+\lambda )\right) -\ln \left( 1-\dfrac {1}{ \ell}\right) .
\end{align*}
The factor $\ell$ in $A$ is the reason why we need the second order approximations of Section \ref{swimpl}. Now we see, from Proposition \ref{Taylor}, that :
\begin{align*}
\lim _{\ell}B&= 
\lim _{\ell}A=0,
\end{align*}
so that
$$\lim _{\ell}r\left(m,\ell\right)-\rho\left(\tfrac{m-\ell}{\ell}\right)=0.$$
But, more precisely, Proposition \ref{Taylor} entails:
$$\left| \dfrac {A}{\ell}\right| \le\dfrac {\lambda ^{2}}{2\left( 1+\lambda \right) ^{2}\left( \ell-1\right) ^{2}}\left( 1+\left(\dfrac { 1+\lambda }{\impl   -\lambda }\right) ^{3}\right),
$$
that is, $A=\mathcal{O}\left(\tfrac{1}{\ell }\right)$. Now, for $B$, since $\lambda \rightarrow \impl \left( \lambda \right)$, $ \lambda \rightarrow \impl \left( \lambda \right) -\lambda $, $\lambda \rightarrow \ln \left( 1+\lambda \right) $, are increasing and concave, then $\lambda \rightarrow \ln \left( \impl \left( \lambda \right) -\lambda \right) $ is increasing and concave too, being composed with an increasing and concave function, so, due to Taylor-Lagrange formula, all these functions satisfy:
$$\left| \Delta f\right| \leq \dfrac {\lambda f'\left( \lambda \right) }{\ell-1},$$
and that yields:
\begin{align*}
\left| \Delta \impl   \right| 
&\le \dfrac {\impl   \lambda }{\left( 1+\lambda \right) \left( \impl   -\lambda \right) \left( \ell-1\right) },
\\
\left| \Delta \ln \left( 1+\lambda \right) \right|
&\le 
\dfrac {\lambda }{\left( 1+\lambda \right) \left( \ell-1\right) },
\\
\left| \Delta \ln \left( \impl   -\lambda \right) \right| 
&\le 
\dfrac {\lambda ^{2}e^{-\impl   }}{\left( \impl   -\lambda \right) ^{2}\left( \ell-1\right) }.
\end{align*}
Also, for $ \ell\ge\tfrac12$, 
$$\left| \ln \left( 1-\dfrac1\ell \right) \right| \leq \dfrac{2 \ln 2}\ell,$$
so that, using \eqref{ineqed1},
\begin{align*}
\left| B \right|
 &\le
 \max\Big( \dfrac {\impl   \lambda }{\left( 1+\lambda \right) \left( \impl   -\lambda \right) \left( \ell-1\right) }
\\
&\hspace{2.5cm}+\dfrac {\lambda ^{2}e^{-\impl   }}{2\left( \impl   -\lambda \right) ^{2}\left( \ell-1\right) },
\dfrac {\lambda }{2\left( 1+\lambda \right) \left( \ell-1\right) }+\dfrac {\ln2 }{\ell}\Big) 
\\
&\le \max\left( \dfrac {\impl   }{ \ell-1 }
+\dfrac {\left( 1+\lambda \right) ^{2}e^{-\impl   }}{2\left( \ell-1\right) },\dfrac {1+2\ln2 }{2\left( \ell-1\right) }\right),
\\
&\le \max\left( \dfrac {\impl   }{ \ell-1 }
+\dfrac {1+\lambda }{2\left( \ell-1\right) },\dfrac {3}{2\left( \ell-1\right) }\right)\le \dfrac {3\left( 1+\lambda \right)}{2\left( \ell-1\right)} ,
\end{align*}
and, using \eqref{ineqed1} again, for $(m,\ell)\in \coin_{3, \delta}$,
\begin{align*}
|\theta \left( m,\ell\right)|\le\left| A+B  \right|
 &\le
\dfrac {3\left( 1+\lambda \right)}{2\left( \ell-1\right)}+\dfrac {\lambda ^{2}\ell}{2\left( 1+\lambda \right) ^{2}\left( \ell-1\right) ^{2}}\left( 1+\dfrac { (1+\lambda)^{6} }{\lambda ^{3}} \right)
\\
&\le
\dfrac {3\left( 1+\lambda \right)}{2\left( \ell-1\right)}+\dfrac { 21(1+\lambda)^{4}\ell}{40\lambda\left( \ell-1\right) ^{2}}
\\
&\le
\dfrac {3\left( 1+\lambda \right)}{2\left( \ell-1\right)}+\dfrac {63 (1+\lambda)^{4}}{80\lambda\left( \ell-1\right) }
\\
&\le
\dfrac {6\left( 1+\lambda \right) ^{4}}{5\lambda\left( \ell-1\right)}\le
\dfrac {6\left( 1+\delta \right) ^{4}}{5\delta^{3}\left( \ell-1\right)}\le
\dfrac {20}{\delta^{3}\ell},
\end{align*}
so that, for $\delta\le\lambda\le\delta^{-1}$ and $\ell\ge 20\delta^{-3}$, we have $|\theta|\le 1$ and 
\begin{align*}
|\errtr_{2}\left( m,\ell \right)|&=e^{-\impl   +u\theta \left( m,\ell\right) } |\theta \left( m,\ell\right)|
\\
&\le e^{|\theta \left( m,\ell\right)|} |\theta \left( m,\ell\right)|
\\
&\le \dfrac {20e}{\delta^{3}\ell}. 
\end{align*}
For instance, this holds true for $(m,\ell)\in \coin_{40\delta^{-4}, \delta}$.

\subsection{Explicit bounds for the second order asymptotics of ${m\brace\ell}$}
\label{sec:taylor}
In this section, we provide detailed computations in order to bound $|\chi(m,\ell)|$, thus completing the proof of  Theorem \ref{goodplus}.
\subsubsection{Taylor coefficients}
As usual, the derivatives of a characteristic function such as $\Phi$ are  bounded as follows:
\begin{equation*}\left| \Phi ^{\left( k\right) }\left( \theta \right) \right| =\left| \mathbb{E} \left[ \left( iZ\right) ^{k}e^{i\theta Z}\right] \right| \leq \mathbb{E} \left[ Z^{k}\right],
\end{equation*}
thus, due to \eqref{poissoncar}, we need the first moments of the Poisson distribution, given by the Touchard polynomials:
\begin{align}
\nonumber
\left( \mathbb{E} \left[ Z^{k}\right] \right) _{1\leq k\leq 5}&=\big( \impl   ,\impl   ^{2}+\impl   ,\impl   ^{3}+3\impl   ^{2}+\impl  ,  \impl  ^4+6\impl   ^{3}+7\impl   ^{2}+\impl   ,
\\
\label{momentpoisson}
&\hspace{4cm}\impl  ^5+10\impl   ^{4} +25\impl   ^{3} +15\impl   ^{2}+\impl   \big) ,
\end{align}
in order to compute the coefficients in the Taylor-Laplace formula for $g$,
for the derivatives of $g$ are obtained through the Leibniz rule, as follows:
\begin{align*}
g(\theta)&=\dfrac {e^{-i\left( 1+\lambda \right) \theta }}{1-e^{-\impl  }}\left(\Phi(\theta)-e^{-\impl  }\right),
\\
g'(\theta)&=\dfrac {e^{-i\left( 1+\lambda \right) \theta }}{1-e^{-\impl  }}\left(-i\left( 1+\lambda \right)\left(\Phi(\theta)-e^{-\impl  }\right)+\Phi'(\theta)\right),
\\
g''(\theta)&=\dfrac {e^{-i\left( 1+\lambda \right) \theta }}{1-e^{-\impl  }}\left(-\left( 1+\lambda \right)^{2}\left(\Phi(\theta)-e^{-\impl  }\right)-2i\left( 1+\lambda \right)\Phi'(\theta)+\Phi''(\theta)\right),
\\
g^{(3)}(\theta)&=\dots
\\
g^{(4)}(\theta)&=\dots 
\\
g^{(5)}(\theta)&=\dfrac {e^{-i\left( 1+\lambda \right) \theta }}{1-e^{-\impl  }}\Big(-i\left( 1+\lambda \right)^{5}\left(\Phi(\theta)-e^{-\impl  }\right)+5\left( 1+\lambda \right)^{4}\Phi'(\theta)+10i\left( 1+\lambda \right)^{3}\Phi''(\theta)\\
&\hspace{2cm}-10\left( 1+\lambda \right)^{2}\Phi^{(3)}(\theta)-5i\left( 1+\lambda \right)\Phi^{(4)}(\theta)+\Phi^{(5)}(\theta)\Big).
\end{align*}
This gives the coefficients in the Taylor-Laplace inequality:
\begin{align*}
&\left| g\left( \theta \right) -g\left( 0\right) -g'\left( 0\right) \theta -g''\left( 0\right) \dfrac {\theta ^{2}}{2}-g^{(3)}\left( 0\right) \dfrac {\theta ^{3}}{6}-g^{(4)}\left( 0\right) \dfrac {\theta ^{4}}{24}\right| \\
&\hspace{4cm}=\left| \int ^{\theta }_{0}\dfrac {g^{\left( 5\right) }\left( u\right) }{5!}\left( \theta -u\right) ^{5}du\right| 
\le \dfrac {\theta ^{5}}{120}\sup _{\left[ 0,\theta \right] }\left| g^{\left( 5\right) }\left( u\right) \right|,
\end{align*}
that is:
\begin{align*}
g(0)&=1,
\\
g'(0)&=\dfrac {1}{1-e^{-\impl  }}\left(-i\left( 1+\lambda \right)\left(1-e^{-\impl  }\right)+i\impl  \right),
\\
&=\dfrac {1}{1-e^{-\impl  }}\left(-i\impl  +i\impl  \right)=0,
\\
g''(0)&=\dfrac {1+\lambda}{\impl  }\left(-\left( 1+\lambda \right)^{2}\left(1-e^{-\impl  }\right)+2\left( 1+\lambda \right)\impl  -\impl  -\impl  ^{2}\right)=-2v,
\\
&=\left( 1+\lambda \right)\left(\lambda-\impl  \right),
\\
g^{(3)}(0)&=6\tau=i\left( 1+\lambda \right) \left( \impl   ^{2}+3\impl   \lambda +\lambda \left( 1+2\lambda \right) \right),
\\
g^{(4)}(0)&=24\gamma=\left( 1+\lambda \right) \left( \impl   ^{3}+\impl   ^{2}\left( 3-4\lambda \right) +6\lambda ^{2}\impl   -\lambda \left( 3\lambda ^{2}+3\lambda +1\right) \right),
\end{align*}
computations needed in order to bound the coefficients in \eqref{affine1}. 
\subsubsection{Upper bound for $C_{\texttt{000}}$}
\label{beeeurk}
Finally, for $\theta\in\R$, the fifth derivative is bounded as follows:
\begin{align*}
\left| g^{(5)}(\theta) \right|
&\le\dfrac {1+\lambda}{\impl  }\Big(\left( 1+\lambda \right)^{5}\left|\Phi(\theta)-e^{-\impl  }\right|+5\left( 1+\lambda \right)^{4}\impl  +10\left( 1+\lambda \right)^{3}\impl  \left( \impl   +1\right)
\\&\hspace{1cm}+ 10\left( 1+\lambda \right)^{2}\impl  \left( \impl   ^{2}+3\impl   +1\right)+ 5\left( 1+\lambda \right)\impl  \left( \impl   ^{3}+6\impl   ^{2}+7\impl   +1\right)
\\&\hspace{3cm}+ \impl  \left( \impl   ^{4}
+10 \impl   ^{3}+25\impl   ^{2}+15\impl   +1\right)\Big)
\\
&\le\dfrac {2\left( 1+\lambda \right)^{6}}{\lambda}+\left( 1+\lambda \right)\Big(5\left( 1+\lambda \right)^{4}+10\left( 1+\lambda \right)^{3}\left(\lambda +2\right)
\\&\hspace{1cm}+ 10\left( 1+\lambda \right)^{2}\left( \impl   ^{2}+3\impl   +1\right)+ 5\left( 1+\lambda \right)\left( \impl   ^{3}+6\impl   ^{2}+7\impl   +1\right)
\\&\hspace{3cm}+ \left( \impl   ^{4}
+10 \impl   ^{3}+25\impl   ^{2}+15\impl   +1\right)\Big)
\\
&\le\dfrac {12\left( 1+\lambda \right)^{6}}{\lambda}+\left( 1+\lambda \right)\Big(5\left( 1+\lambda \right)^{4}+ 10\left( 1+\lambda \right)^{2}\left( \impl   ^{2}+3\impl   +1\right)
\\&\hspace{1cm}+ 5\left( 1+\lambda \right)\left( \impl   ^{3}+6\impl   ^{2}+7\impl   +1\right)
+ \left( \impl   ^{4}
+10 \impl   ^{3}+25\impl   ^{2}+15\impl   +1\right)\Big)
\\
&\le\dfrac {46\left( 1+\lambda \right)^{6}}{\lambda}=C_{4}\left(\lambda \right),
\end{align*}
in which we use again and again $\lambda \leq \impl   \leq 1+\lambda $ and $ \lambda \left( 2+\lambda \right) \leq \left( 1+\lambda \right) ^{2}$, cf. Figure \ref{calculs201810}.

\begin{figure}[ht]
\centering
\includegraphics[width=11cm]{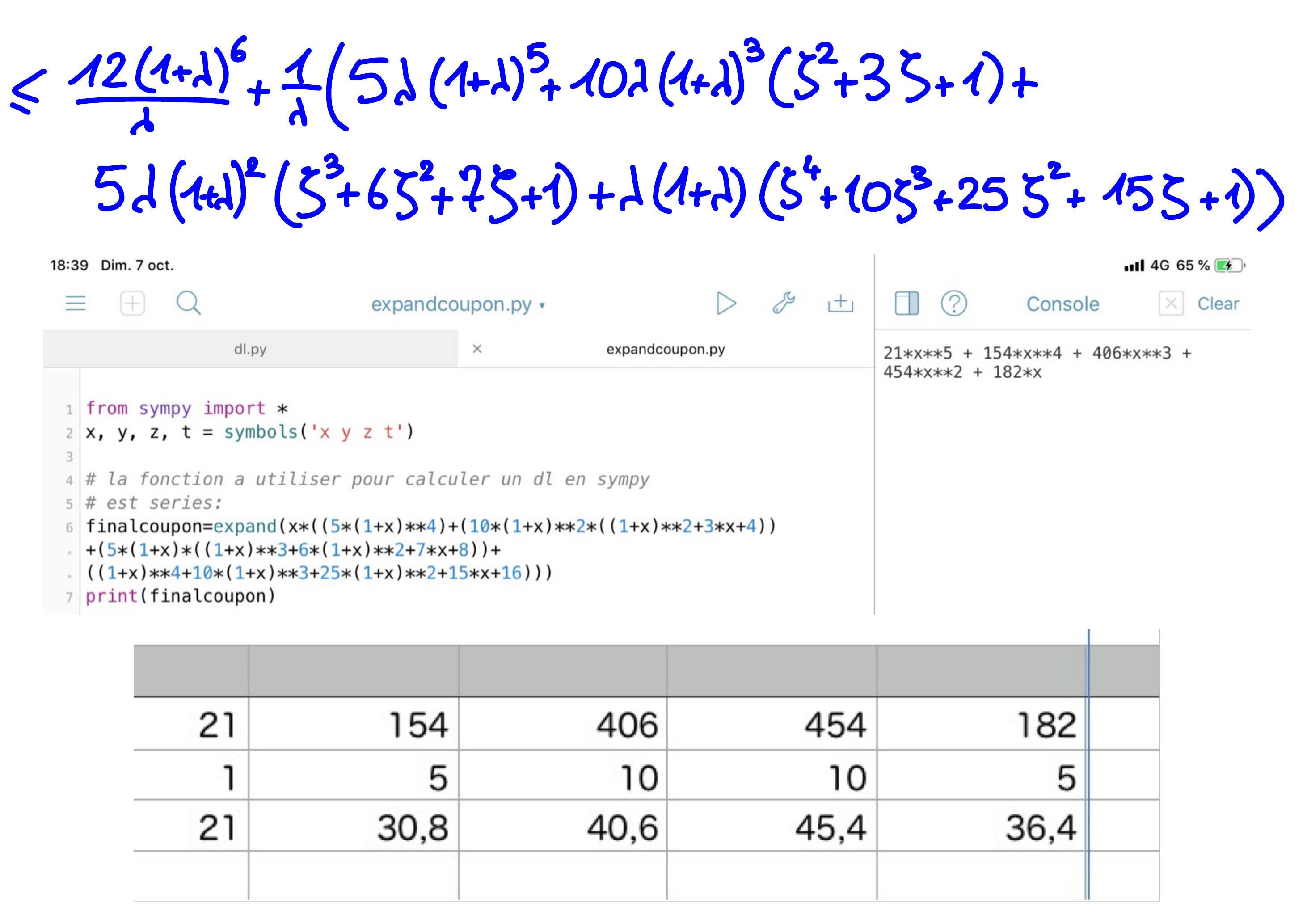}
\caption{Computations: $46\ge 45.4$.}
\label{calculs201810}
\end{figure}

Thus we just proved that:
\begin{align}\label{affine6}
\left| g\left( \theta \right) -\left( 1-v\theta ^{2}+\tau \theta ^{3}+\gamma \theta ^{4}\right)\right| \leq T\left(\lambda \right) |\theta|^{5},
\end{align}
in which
$T\left(\lambda \right)=C_{4}\left(\lambda \right)/120$.
This leads to 
$C_{\texttt{000}}\le T\left(\lambda \right)/3,$
contributing to the bound for $|\chi(m,\ell)|$ through 
\begin{align*}
\sqrt{\frac{ v\ell}{\pi}}\ K_{\ell}^{(000)}&\le \sqrt{\frac{ v\ell}{\pi}}\  C_{\texttt{000}}\ \ell\ \theta_0^{6}\le \dfrac {\left( 1+\lambda \right)^{7}}{10\lambda}\ \ln( \ell)^{6}\ \ell^{-3/2},
\end{align*}
so that $\lambda \in(\delta,\delta^{-1})$ entails
\begin{align*}
\sqrt{\frac{ v\ell}{\pi}}\ K_{\ell}^{(000)}&\le 7\,\delta^{-8}\ \ln( \ell)^{6}\ \ell^{-3/2}.
\end{align*}

In the next sections, we shall also use the following inequalities:
\begin{align}
\label{ineqcoeffs}
\left| \tilde{\gamma} \right| &\leq \dfrac {13}{24}\left( 1+\lambda \right) ^{4},
\quad
\left| \gamma \right| &\leq \dfrac {7}{24}\left( 1+\lambda \right) ^{4},
\quad
\left| \tau \right|\leq \left( 1+\lambda \right) ^{3},
\quad
\lambda \leq 2v\leq 1+\lambda.
\end{align}
\subsubsection{Upper bound for $C_{\texttt{001}}$}
The choice $\tilde{\gamma}=\gamma-\tfrac{v^{2}}2$ insures that \begin{align*}
\kappa_{7}(\theta)&=1-v\theta^2+\tau\theta^3+\gamma\theta^4-e^{-v\theta^2+\tau\theta^3+\tilde{\gamma}\theta^4}= \mathcal{O}\left(\theta^5\right),
\end{align*}
so that
\begin{align*}
\sup\left\{|\kappa_{7}(\theta)||\theta|^{-5},\quad |\theta|\le\theta_0\right\}=C_{5}<+\infty .
\end{align*}
Then $C_{\texttt{001}}=C_{5}/3$, and, as a function of $\lambda$, $C_{\texttt{001}}$ is bounded  for $\lambda\in[\delta,\delta^{-1}]$, for any choice of $\delta\in (0,1)$, just like  $T\left(\lambda \right)$, $C_{\texttt{000}}$, and the other coefficients in relation \eqref{affine1}. More precisely, for $C_{5}$, for instance, we have
$$\left| \dfrac {\tau }{v}\right| \leq \dfrac {\left( 1+\lambda \right) ^{3}}{2\lambda },\qquad \left| \dfrac {\tilde{\gamma} }{v}\right| \leq \dfrac {13\left( 1+\lambda \right) ^{4}}{12\lambda }.$$ 
For $\delta\le \lambda\le \delta^{-1}$, and $\ell$ large enough, we have $\theta_0=\dfrac {\ln\ell}{\sqrt\ell}\le\dfrac{\delta^{3}}{2\left( 1+\delta \right) ^{4}}$, thus $|\theta|\le\theta_0$ entails that
\begin{align*}
\Re(u)&=\Re\left(-v\theta^2+\tau\theta^3+\gamma\theta^4\right)
\\
&=-v\theta^2\ \Re\left(1-\tfrac{\tau}v\theta-\tfrac{\tilde{\gamma}}v\theta^2\right)\le 0,
\end{align*}
and , as a consequence, 
$$ \left| e^{-u}-1+u-\dfrac {u^{2}}{2}\right| \leq \dfrac {\left| u\right| ^{3}}{6}.
$$
Then 
$$|\kappa_{7}(\theta)|\le \dfrac {\left| u\right| ^{3}}{6}+\dfrac {1}{2}\left|u^{2}-v^2\theta^4\right|,$$
but, since $\theta_0 \le 1 $,
$$\left| u\right|^{3}  \le \left( v+\left| \tau \right| +\left| \gamma \right|  \right) ^{3}\theta ^{6} \le3 \left( 1+\lambda \right) ^{12}\theta ^{6},$$
and
$$\left|u^{2}-v^2\theta^4\right|\le3 \left( 1+\lambda \right) ^{8}\theta ^{5}.$$
Finally 
$$|\kappa_{7}(\theta)| \le
2 \left( 1+\lambda \right) ^{12}\theta ^{5},$$
and $C_{\texttt{001}}=\left( 1+\lambda \right) ^{12}$ does the trick. Finally, $\lambda \in(\delta,\delta^{-1})$ entails that the corresponding contribution to  $|\chi(m,\ell)|$ is bounded as follows :
\begin{align*}
\sqrt{\frac{ v\ell}{\pi}}\ K_{\ell}^{(001)}\le\sqrt{\frac{ v\ell}{\pi}}\ C_{\texttt{001}}\,\dfrac{\ln^{6}\ell}{\ell^{2}}&\le 2^{12}\,\delta^{-13}\ \ln( \ell)^{6}\ \ell^{-3/2}.
\end{align*}

\subsubsection{Upper bound for $C_{\texttt{0020}}$ and for $C_{\texttt{0021}}$}
First, inequality \eqref{0020} holds true, for instance, when 
$$\ell\,\ln^{-2} \ell\ge 16\,\delta^{-2},$$
 while inequality \eqref{002} holds true if $\ell|\tilde{\gamma}|\theta_0^4\le \ln2$, for instance if 
$$\ell\,\ln^{-4} \ell\ge16\,\delta^{-4}/\ln2.$$
Then
$$2\sqrt{\pi}\,v^{-7/2}\left(-\tau^2+\tilde{\gamma}^2\right)\   \ell^{-5/2}\le\  C_{\texttt{0020}}\ell^{-5/2}$$
holds true   if one chooses:
$$C_{\texttt{0020}}\sqrt{\pi/ v}\  =24\left( 1+\lambda \right) ^{8}\lambda^{-3}.$$
Finally, $\lambda \in(\delta,\delta^{-1})$ entails that the corresponding contribution to  $|\chi(m,\ell)|$ is bounded as follows :
\begin{align*}
\sqrt{\frac{ v\ell}{\pi}}\ K_{\ell}^{(0020)}&\le 3\times 2^{11}\,\delta^{-11}\ \ell^{-2}.
\end{align*}
Similarly
\begin{align*}
\sqrt{\frac{ v\ell}{\pi}}\ K_{\ell}^{(0021)}
&\le\,|\tilde{\gamma}|v^{-2}\ell^{-1}\le\, 50\ \delta^{-6}\ \dfrac1\ell.
\end{align*}
\subsubsection{Upper bound for $K_\ell ^{(1)}$}
\label{lastbound}
According to \eqref{affine2},
$$\left|K_\ell ^{(1)}\right|\le 2\pi  \ell^{-h \left( \impl  \right) \ln(\ell)}=o\left( \ell^{-3/2}\right),$$
we have
$$\left|K_\ell ^{(1)}\right|\le 2\pi  \ell^{-2 }=o\left( \ell^{-3/2}\right),$$
as desired, provided that:
$$ \ell\geq \exp\left( \dfrac {2}{h\left( \xi \left( \lambda \right) \right) }\right) ,\quad\forall \lambda \in \left(\delta , \delta^{-1}\right). $$
Due to the variations of $h$, $\xi$, this amounts to:
$$ \ell\geq\max\left( \exp\left( \dfrac {2}{h\left( \xi \left(\delta \right) \right) }\right), \exp\left( \dfrac {2}{h\left( \xi \left(\delta^{-1} \right) \right) }\right)\right) . $$
\subsubsection{Conclusion}
\label{sec:concl}
Finally, for $\lambda \in \left(\delta , \delta^{-1}\right)$ and $\ell$ large enough (be more precise), 
$$|\chi(m,\ell)|\le 50\ \delta^{-6}\ \dfrac1\ell +o(\ell^{-1}),$$
more precisely,
$$|\chi(m,\ell)|\le 50\ \delta^{-6}\ \dfrac1\ell +3\times 2^{11}\,\delta^{-11}\ \ell^{-2}+5\,000\,\delta^{-13}\ \ln( \ell)^{6}\ \ell^{-3/2}.$$

\end{document}